\DeclareSymbolFontAlphabet{\mathbb}{AMSb}
\DeclareSymbolFontAlphabet{\mathbbl}{bbold}
\def\bbeps{\bbespilon}
\begin{document}

\newcommand\footnotemarkfromtitle[1]{%
\renewcommand{\thefootnote}{\fnsymbol{footnote}}%
\footnotemark[#1]%
\renewcommand{\thefootnote}{\arabic{footnote}}}

\title{Mollification in strongly Lipschitz domains 
  with application to continuous and discrete\\ De Rham complexes\footnotemark[1]\ \footnotemark[4]}
\author{Alexandre Ern\footnotemark[2] \and Jean-Luc
  Guermond\footnotemark[3]}

\maketitle

\renewcommand{\thefootnote}{\fnsymbol{footnote}} \footnotetext[1]{
  This material is based upon work supported in part by the National
  Science Foundation grants DMS-1015984 and DMS-1217262, by the Air
  Force Office of Scientific Research, USAF, under grant/contract
  number FA99550-12-0358.  Draft
  version, \today}
\footnotetext[2]{Universit\'e Paris-Est, CERMICS (ENPC),
  77455 Marne-la-Vall\'ee cedex 2, France.}
\footnotetext[3]{Department of Mathematics, Texas
  A\&M University 3368 TAMU, College Station, TX 77843, USA}
\footnotetext[4]{Published: A. Ern, J.-L. Guermond,
Mollification in strongly Lipschitz domains 
  with application to continuous and discrete De Rham complex,
\textit{Computational Methods in Applied Mathematics},
Volume 16, Issue 1, (2016) 51--75.}

\renewcommand{\thefootnote}{\arabic{footnote}}

\begin{abstract} We construct mollification operators in strongly
  Lipschitz domains that do not invoke non-trivial extensions, are
  $L^p$ stable for any real number $p\in[1,\infty]$, and commute with the differential operators $\GRAD$,
  $\ROT$, and $\DIV$.  We also construct mollification operators
  satisfying boundary conditions and use them to characterize the
  kernel of traces related to the tangential and normal trace of
  vector fields.  We use the mollification operators to build
  projection operators onto general $H^1$-, $\bH(\text{curl})$- and
  $\bH(\text{div})$-conforming finite element spaces, with and without
  homogeneous boundary conditions. These operators commute with the
  differential operators $\GRAD$, $\ROT$, and $\DIV$, are $L^p$-stable, and have optimal approximation
  properties on smooth functions.
\end{abstract}

\begin{keywords}
Finite element approximation, mollification, De Rham diagram 
\end{keywords}

\begin{AMS}
65D05, 65N30, 41A65
\end{AMS}

\pagestyle{myheadings} \thispagestyle{plain} 
\markboth{A. ERN, J.L. GUERMOND}{Mollification in Lipschitz domains}

\section{Introduction}
Smoothing by mollification is an important tool for the analysis and
the approximation of partial differential equations. This tool has
been introduced by \citet[p.~206]{MR1555394}, \citet[p.~487]{Sob38},
and \citet[p.~136--139]{MR0009701}. The mollifying operation commutes
with differential operators and converges optimally when the function
to be smoothed is defined over the entire space $\Real^d$. These
properties may not be easy to achieve if the function in question is
defined only in a non-smooth domain $\Dom$, say only Lipschitz, since
mollification by convolution as originally introduced in the above
references requires to extend the function outside $\Dom$, which is
a non-trivial task in general unless the boundary and the function are
reasonably smooth.  The boundary difficulty has
been overcome by \citet{MR1884727} and \citet{MR1732050} by 
redefining mollification using a convolution-translation technique
so that mollification does not require information outside of
$\Dom$. The question is, however, more subtle when
dealing with vectors fields where normal or tangent boundary
conditions are involved, and, to our knowledge, has not yet been fully
addressed in the literature.  

The first objective of this paper is to revisit the theory of
mollification for scalar- and vector-valued fields in strongly
Lipschitz domains with the following goals in mind: the mollification
operators must be compatible with the De Rham complex (\ie they
must commute with the standard differential operators $\GRAD$, $\ROT$, and
$\DIV$), be $L^p$-stable for any real number $p\in [1,\infty]$, and
have strong convergence properties in the entire domain.  Using a
partition of unity subordinated to a covering of the boundary as done
in \citet{MR1884727} and \citet{MR1732050} is (seemingly) incompatible
with the first constraint.  The route that we propose instead is based
on a shrinking technique of the domain using transversal vector
fields.  We also devise a second sequence of mollification operators
based on the extension by zero; the corresponding operators have the
property that the mollified functions are in the kernel of the
canonical trace operators in $H^1$, $\Hrt$, and $\Hdv$ (\ie zero
trace, zero tangential trace, and zero normal trace, respectively).
The main results of the first part of the paper are
Theorems~\ref{Thm:Kdeltaf_to_f_in_LP} and
\ref{Thm:Kdeltaf_to_f_in_LP_zero}, and their respective corollaries.
As an application of independent interest, we give in
Theorem~\ref{Thm:traces_div_curl} a clear characterization of the
kernel of the traces associated with the divergence and the curl
operators.

The second objective of this work is to use the mollification
operators introduced in the first part of the paper to construct
quasi-interpolation operators onto general finite element subspaces of
$H^1$, $\Hrt$, and $\Hdv$, with and without homogeneous boundary
conditions. We want these operators to satisfy three key properties:
(i) $L^p$-stability for any $p\in[1,\infty]$, (ii) commutation with
differential operators, (iii) preservation of functions in the finite
element spaces.  Operators with such properties are important tools in
finite element exterior calculus, see
\citet[\S5.4]{Arnold_Falk_Winther_2006},
\cite[\S5.3]{Arnold_Falk_Winther_2010}, where they are termed bounded
cochain projections.  In particular, the above properties imply that
the quasi-interpolation error is bounded by the best approximation
error.  

The bases for constructing stable, commuting, and quasi-interpolation
projections have been laid out in \citet{Schoberl_2001,Schoberl_2008}
and~\citet{Christ:07}, where stability and commutation are achieved by
composing the canonical finite element interpolation operators with
some mollification technique. Then, following~\citet{Schoberl:05}, the
projection property over finite element spaces is obtained by
composing these operators with the inverse of their restriction to the
said spaces.  An important extension of this construction allowing the
possibility of using shape-regular mesh sequences and boundary
conditions has been achieved
by~\citet{Christiansen_Winther_mathcomp_2008}. Further variants of
this construction have recently been proposed. For instance
in~\citet{Christiansen:15}, the bounded cochain has the additional
property of preserving polynomials locally, up to a certain degree,
and in~\citet{FalWi:12} it is defined locally.  In the present work, we
revisit the results of~\cite{Christiansen_Winther_mathcomp_2008} by
invoking our shrinking-based mollification operators which do not
require extension outside the domain. We also present the results in
the language of numerical analysis to make them accessible to a wide
audience.  The main result of this second part is
Theorem~\ref{Th:Pih}. As an application, we give in
Theorem~\ref{Thm:Poincare} discrete Poincar\'e inequalities for
vector-valued functions.

The paper is organized as follows. In \S\ref{Sec:shrinking} we
introduce a shrinking technique of $\Dom$ that avoids difficulties with the
boundary. We also introduce an expansion technique.  
In~\S\ref{Sec:Mollification} we use the shrinking technique to devise
mollification operators that commute with differential operators, are
stable in any $L^p$ space, and have approximation properties on smooth
functions. Using the expansion technique from \S\ref{Sec:shrinking},
we introduce in \S\ref{Sec:Mollification_zero} mollification operators
that produce compactly supported functions and
share the same properties as the shrinking-based operators. 
In~\S\ref{sec:FE}, we
introduce the finite element setting that is necessary to construct
canonical interpolation operators on standard $H^1$-,
$\bH(\text{curl})$-, and $\bH(\text{div})$-conforming finite element
spaces, with and without homogeneous boundary
conditions. In~\S\ref{sec:quasi.inter} we devise quasi-interpolant
operators that are $L^p$-stable, commute with
differential operators, and preserve finite element spaces, with and without boundary conditions. 

\section{Some geometry} \label{Sec:shrinking} In the entire
paper, $\Dom$ is an open, bounded, strongly Lipschitz, connected set in $\R^d$,  $\interior(\Dom)$
denotes the interior of $\Dom$, and $\overline \Dom$ its closure.
Points in $\Real^d$ and $\Real^d$-valued functions and mappings are
denoted using bold face; the Euclidean norm in $\Real^q$, $q\ge1$, is denoted
$\|\SCAL\|_{\ell^2(\Real^q)}$, or $\|\SCAL\|_{\ell^2}$ when the
context is unambiguous. We abuse the notation by using the same symbol
for the $\ell^2$-induced matrix norm and the norm of multilinear maps.

\subsection{Shrinking of strongly Lipschitz domains}\label{Sec:mollification_Lipschitz}
Let $\Dom$
be a strongly Lipschitz domain, \ie there are $\alpha>0$,
$\beta>0$, a finite number $N$ of affine maps $\bT_n$,
$n\in\intset{1}{N}$, and Lipschitz maps
$\Phi_n:B(\bzero_{\Real^{d-1}},\alpha) \longrightarrow \Real$ such
that
$\front = \bigcup_{n=1}^N \bT_n(\bset (\bx,z=\Phi_n(\bx))\tq \bx\in
B(\bzero_{\Real^{d-1}},\alpha) \eset),$ and for all $n\in\intset{1}{N}$,
\begin{align*}
\bT_n(\bset  (\bx,z) \tq \bx\in
B(\bzero_{\Real^{d-1}},\alpha),\;\Phi_n(\bx)<z<\Phi_n(\bx)+\beta \eset) 
&\subset \interior(\Dom),\\
\bT_n(\bset  (\bx,z) \tq \bx\in B(\bzero_{\Real^{d-1}},\alpha),\;\Phi_n(\bx)-\beta<z<\Phi_n(\bx) \eset) 
&\subset \Real^d {\setminus}
\overline  \Dom,
\end{align*}
where $B(\bzero_{\Real^{d-1}},\alpha)$ is the open ball of radius
$\alpha$ in $\Real^{d-1}$ centered at the origin.

Since $\Dom$ is strongly Lipschitz and bounded, the combination of
Theorem~2.7 and Lemma~2.2 from \citet{Hofmann_Mitrea_Taylor_2007}
implies that $\Dom$ has continuous globally transversal vector fields,
\ie there exist a vector field $\bJ\in \bC^0(\front)$ and a real
number $\gamma>0$ with the property that
$\bn(\bx)\SCAL \bJ(\bx) \ge \gamma$ at \ae point $\bx$ on $\front$,
where $\bn$ is the unit normal vector pointing outward.
Proposition~2.3 in \citep{Hofmann_Mitrea_Taylor_2007} in turn implies
the existence of a vector field $\bj\in \bC^\infty(\Real^d)$ whose
restriction to $\front$ is
globally transversal and $\|\bj(\bx)\|_{\ell^2}=1$ for all
$\bx\in \front$. We then
define the mapping:
\begin{equation}
\bvarphi_\delta: \Real^d \ni \bx \mapsto \bx - \delta \bj(\bx)\in \Real^d.
\label{map_varphi_delta_Lipschitz}
\end{equation}
Using Proposition~4.15 in \cite{Hofmann_Mitrea_Taylor_2007}, together
with the uniform cone property (see
\citep[pp.~599-600]{Hofmann_Mitrea_Taylor_2007}), we infer that there
exists $\radius>0$ such that
\begin{equation}
\bvarphi_\delta(\Dom) + B(\bzero,\delta\radius) \subset
\Dom,\qquad \forall \delta\in[0,1].  \label{map_varphi_delta_compact_inclusion}
\end{equation}
%
\begin{lemma}[Properties of $\bvarphi_\delta$] \label{Lem:shrinking_Lipschitz} The following properties hold:
\berom
\item \label{Item0:smoothness_phi_delta} The map $\bvarphi_\delta$ is
  of class $C^\infty$ for all $\delta\in[0,1]$.
\item \label{Item3:smoothness_phi_delta} For all $\ell\in \Natural$, there is $c$ such that
  $\max_{\bx\in\Dom}\|D^\ell \bvarphi_\delta(\bx) - D^\ell \bx\|_{\ell^2} \le
  c\, \delta$, for all $\delta\in[0,1]$, where $D^\ell$ denotes the Fr\'echet derivative of order $\ell$.
\item \label{Item1:smoothness_phi_delta}
  $\bvarphi_\delta(\Dom)+B(\bzero,\delta\radius) \subset\Dom$ for
  all $\delta\in(0,1]$.
\item \label{Item2:smoothness_phi_delta}  The mapping
  $\bx\mapsto \bx + t (\bvarphi_\delta(\bx) + (\delta \radius \by)
  - \bx)$
  maps $\Dom$ into $\Dom$ for all $t\in [0,1]$, all
  $\by\in B(\bzero,1)$ and all $\delta \in[0,1]$.
\eerom
\end{lemma}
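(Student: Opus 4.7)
\smallskip
\textbf{Proof proposal.} The plan is to take each item in turn, exploiting the explicit form $\bvarphi_\delta(\bx)=\bx-\delta\bj(\bx)$ with $\bj\in \bC^\infty(\Real^d)$ and the compact-inclusion statement \eqref{map_varphi_delta_compact_inclusion}.

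For (i), smoothness of $\bvarphi_\delta$ follows immediately from the smoothness of the identity map and of $\bj$. For (ii), observe that differentiating $\bvarphi_\delta(\bx)=\bx-\delta\bj(\bx)$ gives
\[
D\bvarphi_\delta(\bx)-I \;=\; -\delta\, D\bj(\bx),
\qquad
D^\ell\bvarphi_\delta(\bx) \;=\; -\delta\, D^\ell\bj(\bx)\quad(\ell\ge 2),
\]
while for $\ell=0$ we simply have $\bvarphi_\delta(\bx)-\bx=-\delta\bj(\bx)$. In all cases the right-hand side is $\delta$ times a quantity bounded independently of $\bx\in\overline{\Dom}$, using that $\bj\in \bC^\infty(\Real^d)$ and $\overline{\Dom}$ is compact. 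This yields the constant $c$ claimed in (ii), uniformly for $\delta\in[0,1]$.

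Item (iii) is just a restatement of \eqref{map_varphi_delta_compact_inclusion} for $\delta\in(0,1]$, so there is nothing new to prove. The main point is (iv), and here the key observation is a rescaling of the parameter $\delta$. Expanding the mapping in (iv) gives
\[
\bx + t\bigl(\bvarphi_\delta(\bx)+\delta\radius\by - \bx\bigr)
\;=\; \bx - (t\delta)\,\bj(\bx) + (t\delta)\radius\, \by
\;=\; \bvarphi_{t\delta}(\bx) + (t\delta)\radius\, \by.
\]
Setting $\tilde\delta:=t\delta\in[0,1]$ (since $t,\delta\in[0,1]$), the point on the right lies in $\bvarphi_{\tilde\delta}(\Dom)+B(\bzero,\tilde\delta\radius)$ because $\by\in B(\bzero,1)$ implies $\tilde\delta\radius\by\in B(\bzero,\tilde\delta\radius)$. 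Invoking \eqref{map_varphi_delta_compact_inclusion} with parameter $\tilde\delta$ then gives
\[
\bvarphi_{\tilde\delta}(\Dom)+B(\bzero,\tilde\delta\radius)\subset \Dom,
\]
which is the desired conclusion. The main obstacle, such as it is, lies in spotting this rescaling trick in (iv); once one rewrites the convex combination as $\bvarphi_{t\delta}(\bx)+(t\delta)\radius\by$, the statement reduces directly to the already-established inclusion~\eqref{map_varphi_delta_compact_inclusion}.
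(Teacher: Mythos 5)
Your proof is correct and follows essentially the same route as the paper: items (i)–(iii) are direct consequences of $\bj\in\bC^\infty(\Real^d)$, compactness of $\overline\Dom$, and \eqref{map_varphi_delta_compact_inclusion}, while (iv) rests on the rescaling identity $t(\bvarphi_\delta(\bx)-\bx)=\bvarphi_{t\delta}(\bx)-\bx$ and an appeal to \eqref{map_varphi_delta_compact_inclusion} with parameter $t\delta$, exactly as in the paper.
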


\begin{proof} The first two properties are consequences of $\bj$
  being of class $C^\infty$ and $\Dom$ being bounded, while~\eqref{Item1:smoothness_phi_delta} is just~\eqref{map_varphi_delta_compact_inclusion}. To prove~\eqref{Item2:smoothness_phi_delta}, observe that
  $t(\bvarphi_\delta(\bx)-\bx)=\bvarphi_{t\delta}(\bx)-\bx$.  This
  implies that
  $\bx + t (\bvarphi_\delta(\bx) + (\delta \radius \by) - \bx) =
  \bvarphi_{t\delta}(\bx) + t \delta \radius \by \in \bvarphi_{t
    \delta}(D) + B(\bzero,t\delta \radius)\subset\Dom$
  for all $\by\in B(\bzero,1)$, all $t\in [0,1]$ and all
  $\delta\in [0,1]$. 
\end{proof}

\subsection{Expansion of strongly Lipschitz domains}
Since $\Dom$ is bounded, there are $\bx_\Dom\in\Real^d$ and $r_\Dom>0$
such that $\Dom\subset B(\bx_\Dom,r_\Dom)$. Let
$\calO = B(\bx_\Dom,r_\Dom){\setminus}\overline{\Dom}$.  The domain
$\calO$ is bounded, open, and strongly Lipschitz; hence, we can apply
the above argument again, and deduce the existence of a vector field
$\bk\in \bC^\infty(\Real^d)$ that is globally transversal for $\calO$,
points outward $\Dom$,
and $\|\bk(\bx)\|_{\ell^2}=1$ for all $\bx\in \partial O$; note that
$\front \subset \partial O$. We then define the mapping:
\begin{equation}
\bvartheta_\delta: \Real^d \ni \bx \longmapsto \bx + \delta \bk(\bx)\in \Real^d.
\label{map_vartheta_delta_Lipschitz}
\end{equation}
As above, we infer that there
exists $\zeta>0$ such that
\begin{equation}
  \bvartheta_\delta(\calO) + B(\bzero,3\delta\zeta) \subset
  \calO,\qquad \forall \delta\in[0,1].  \label{map_vartheta_delta_compact_inclusion}
\end{equation}
\begin{lemma}[Properties of
  $\bvartheta_\delta$] \label{Lem:enlarging_Lipschitz} The following
  properties hold: \berom
\item \label{Item0:smoothness_vartheta_delta} The map $\bvartheta_\delta$ is
  of class $C^\infty$ for all $\delta\in[0,1]$.
\item \label{Item3:smoothness_vartheta_delta} For all
   $\ell\in \Natural$, there is $c$ such that
    $\max_{\bx\in\Dom}\|D^\ell \bvartheta_\delta(\bx) - D^\ell
   \bx\|_{\ell^2} \le
   c\, \delta$, for all $\delta\in[0,1]$.
\item \label{Item1:smoothness_vartheta_delta}
  $\bvartheta_\delta(\overline\calO)+B(\bzero,2\delta\zeta) \subset \calO$ for
  all $\delta\in(0,1]$.
\eerom
\end{lemma}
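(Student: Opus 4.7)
The plan is to mirror the proof of Lemma~\ref{Lem:shrinking_Lipschitz}, exploiting the fact that $\bvartheta_\delta(\bx) = \bx + \delta\bk(\bx)$ has exactly the same affine-in-$\delta$ structure as $\bvarphi_\delta$, with $\bk$ in place of $-\bj$, and that $\bk\in\bC^\infty(\Real^d)$ by construction.

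Property~\eqref{Item0:smoothness_vartheta_delta} is immediate: the map is a $C^\infty$ perturbation of the identity. For property~\eqref{Item3:smoothness_vartheta_delta}, I would note that $D^\ell\bvartheta_\delta(\bx)-D^\ell\bx = \delta\, D^\ell\bk(\bx)$, so the bound follows with $c=\max_{\bx\in\overline\Dom}\|D^\ell\bk(\bx)\|_{\ell^2}$, which is finite because $\bk$ is smooth and $\overline\Dom$ is compact.

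The only nontrivial point is property~\eqref{Item1:smoothness_vartheta_delta}, where we must pass from the open set $\calO$ in \eqref{map_vartheta_delta_compact_inclusion} to its closure $\overline\calO$, paying for this with a shrinking of the buffer from $3\delta\zeta$ to $2\delta\zeta$. The key observation is that by continuity of $\bvartheta_\delta$, any $\bz\in\bvartheta_\delta(\overline\calO)$ is a limit of points $\bz_n = \bvartheta_\delta(\bx_n)$ with $\bx_n\in\calO$, so $\bvartheta_\delta(\overline\calO)\subset \overline{\bvartheta_\delta(\calO)}$. Hence, given any $\bz\in\bvartheta_\delta(\overline\calO)$ and $\by\in B(\bzero,2\delta\zeta)$, I would choose $\bz_n\in\bvartheta_\delta(\calO)$ with $\|\bz-\bz_n\|_{\ell^2}<\delta\zeta$ and write $\bz+\by = \bz_n + (\by + \bz - \bz_n)$ with $\|\by+\bz-\bz_n\|_{\ell^2}<3\delta\zeta$, which lies in $\bvartheta_\delta(\calO)+B(\bzero,3\delta\zeta)\subset\calO$ by \eqref{map_vartheta_delta_compact_inclusion}.

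I do not anticipate any genuine obstacle; the minor care point is just the triangle-inequality argument for the closure, which is why the constant $3$ was put in~\eqref{map_vartheta_delta_compact_inclusion} in the first place: it provides exactly the slack needed to absorb an arbitrarily small approximation of points of $\overline\calO$ by points of $\calO$ while leaving a uniform buffer of size $2\delta\zeta$.
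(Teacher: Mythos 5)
Your proof is correct and essentially the same as the paper's: both hinge on the $3\delta\zeta$ slack in \eqref{map_vartheta_delta_compact_inclusion}, trading one $\delta\zeta$ to pass from $\calO$ to $\overline\calO$ and leaving a residual $2\delta\zeta$ buffer. The only stylistic difference is that you invoke the topological containment $\bvartheta_\delta(\overline\calO)\subset\overline{\bvartheta_\delta(\calO)}$ to approximate on the image side, whereas the paper approximates on the domain side by picking $\bz_\delta\in\calO$ with $(1+\delta L_\bk)\|\bz_\delta-\bx\|_{\ell^2}<\delta\zeta$ (where $L_\bk$ is the Lipschitz constant of $\bk$) and then propagating this through $\bvartheta_\delta$ via its explicit Lipschitz bound; your version avoids introducing $L_\bk$ at all, which is a mild simplification but not a different method.
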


\begin{proof}
The only novelty is the proof of~(iii). 
Let $\bx\in \overline\calO$, then there exists $\bz_\delta \in \calO$
such that $(1+\delta L_\bk)\|\bz_\delta-\bx\|_{\ell^2} < \delta \zeta$, where $L_\bk$ denotes the Lipschitz constant of the field $\bk$ in $\overline\calO$.  We observe that
$\bvartheta_\delta(\bx) + B(\bzero,2\delta \zeta) = \bvartheta_\delta(\bz_\delta) +
(\bvartheta_\delta(\bx)-\bvartheta_\delta(\bz_\delta)) + B(\bzero,2\delta \zeta) \subset \bvartheta_\delta(\calO) +
B(\bzero,\delta \zeta) + B(\bzero,2\delta \zeta) = \bvartheta_\delta(\calO) + B(\bzero,3\delta\zeta) \subset \calO$ owing to~\eqref{map_vartheta_delta_compact_inclusion}. 
\end{proof}

\section{Mollification without extension} \label{Sec:Mollification} We
introduce in this section a mollification technique in strongly Lipschitz domains that does not
require to invoke non-trivial extensions and that commutes with
differential operators. The mapping $\bvarphi_\delta:\Dom \longrightarrow\Dom$ and
$\radius>0$ are defined in \eqref{map_varphi_delta_Lipschitz} and
\eqref{map_varphi_delta_compact_inclusion}.
In what follows, $\Jac_\delta(\bx)$ denotes the Jacobian matrix of $\bvarphi_\delta$ at $\bx\in \Dom$.

\subsection{Mollification} \label{Sec:Generic_assumptions} 
Let us consider the following kernel
\begin{equation}
\rho(\by):=\left\{\begin{array}{ll}%
    \eta\exp\left(-\frac 1{1-\|\by\|_{\ell^2}^2}\right), &\textnormal{if }\|\by\|_{\ell^2}< 1, \\
    0, &\textnormal{if }\|\by\|_{\ell^2} \ge 1,%
\end{array}\right.\label{def_mollification_kernel_rho}
\end{equation}
where $\eta$ is chosen so that
$\int_{\polR^d}\rho(\by)\dif y =\int_{B(\bzero,1)}\rho(\by)\dif y =
1$.
To be generic, we introduce $q\in \polN$, $q\ge1$, and a smooth
$\Real^{q\CROSS q}$-valued field
$\polA_\delta:\Dom\to \Real^{q\CROSS q}$ (related to the Jacobian $\Jac_\delta$ of $\bvarphi_\delta$, see~\eqref{eq:exp_polA} below) such that for all
$l\in \polN$, there is $c$ such that
\begin{equation}
\sup_{\bx \in \Dom}\|D^l(\polA_\delta(\bx) -\polI)\|_{\ell^2} \le c\, \delta,\label{Assumption_polA_delta}
\end{equation}  
where $\polI$ is
the identity matrix in $\Real^{q\CROSS q}$.
Consider the following smoothing operators acting on $f\in L^1(\Dom;\Real)$ and
$g=(g_1,\ldots,g_q)\tr\in L^1(\Dom;\Real^q)$:
\begin{subequations}\begin{align}
(\calK_\delta\upg f)(\bx) &:= \int_{B(\bzero,1)}
\rho(\by)  f(\bvarphi_\delta(\bx) + (\delta \radius) \by)\dif y,\label{def_calK_delta_upg}\\
(\calK_\delta g)(\bx) &:=  \polA_\delta(\bx) (\calK_\delta\upg g_1(\bx),\ldots,\calK_\delta\upg g_q(\bx))\tr.
\label{def_calK_delta}
\end{align}\end{subequations}
Property
\eqref{Item1:smoothness_phi_delta} from
Lemma~\ref{Lem:shrinking_Lipschitz} implies that
\[
\bvarphi_\delta(\bx) +
(\delta \radius) \by \in \bvarphi_{\delta}(\Dom) + (\delta
\radius)B(\bzero,1)\subset \Dom, \quad \forall \bx\in\Dom, \ \forall \by\in B(\bzero,1).
\] 
The means that the domains of $\calK_\delta\upg$ and $\calK_\delta$ are indeed
$L^1(\Dom;\Real)$ and $L^1(\Dom;\Real^q)$, \ie there is no need to
invoke extensions outside $\Dom$.
\begin{lemma}[Smoothness] \label{Lem:Kdelta_Cinfty}
$\calK_\delta g$ is in $C^\infty(\Dom;\Real^q)$ for all $g\in L^1(\Dom;\Real^q)$, and $\calK_\delta g$ as well as all its derivatives admit a continuous extension to $\overline\Dom$.
\end{lemma}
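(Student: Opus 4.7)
The proof rests on a change of variables that transfers the $\bx$-dependence of the integrand into a smooth kernel, after which smoothness follows from the classical theorem on differentiation under the integral sign.

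First, I would use Lemma~\ref{Lem:shrinking_Lipschitz}\eqref{Item1:smoothness_phi_delta} together with the substitution $\bz=\bvarphi_\delta(\bx)+(\delta\radius)\by$ in~\eqref{def_calK_delta_upg} to rewrite
\[
(\calK_\delta\upg f)(\bx)=\int_{\Dom} \kappa_\delta(\bx,\bz)\,f(\bz)\dif z,\qquad
\kappa_\delta(\bx,\bz):=(\delta\radius)^{-d}\rho\bigl((\bz-\bvarphi_\delta(\bx))/(\delta\radius)\bigr).
\]
The change of variables is legitimate because Lemma~\ref{Lem:shrinking_Lipschitz}\eqref{Item1:smoothness_phi_delta} guarantees that the support of $\by\mapsto\rho(\by)$ is mapped into $\Dom$. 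The gain is decisive: $\kappa_\delta$ is jointly $C^\infty$ on $\Real^d\times\Real^d$, since $\bvarphi_\delta\in C^\infty(\Real^d)$ by Lemma~\ref{Lem:shrinking_Lipschitz}\eqref{Item0:smoothness_phi_delta} and $\rho\in C^\infty(\Real^d)$.

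Second, I would apply the classical differentiation-under-the-integral theorem. For any multi-index $\alpha$ and any bounded set $U\subset\Real^d$ containing $\overline\Dom$, the derivative $\partial_\bx^\alpha \kappa_\delta(\bx,\bz)$ is a polynomial expression in the (bounded) derivatives of $\bvarphi_\delta$ and the derivatives of $\rho$ evaluated at rescaled arguments, and is therefore bounded uniformly on $U\times\Dom$. Since $f\in L^1(\Dom;\Real)$, the integrand is dominated by a fixed $L^1$ function, so the map
\[
G_\alpha(\bx):=\int_\Dom \partial_\bx^\alpha \kappa_\delta(\bx,\bz)\,f(\bz)\dif z
\]
is defined for every $\bx\in U$, is continuous there by dominated convergence, and coincides with $\partial^\alpha (\calK_\delta\upg f)(\bx)$ on $\Dom$. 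This shows that $\calK_\delta\upg f\in C^\infty(\Dom;\Real)$ and that every one of its derivatives admits a continuous extension to $\overline\Dom$ (in fact, to all of $\Real^d$).

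Third, the hypothesis~\eqref{Assumption_polA_delta} applied with $l=0,1,2,\ldots$ shows that $\polA_\delta$ is $C^\infty$ on $\Dom$ with every derivative bounded, and hence that $\polA_\delta$ together with all its derivatives extends continuously to $\overline\Dom$. Applying the Leibniz rule to $\calK_\delta g=\polA_\delta(\calK_\delta\upg g_1,\ldots,\calK_\delta\upg g_q)\tr$ then transfers the conclusion from the scalar operator $\calK_\delta\upg$ to the vector-valued operator $\calK_\delta$. I do not foresee any substantive obstacle: the only subtlety is that the argument of $f$ in~\eqref{def_calK_delta_upg} depends on $\bx$ while $f$ is only $L^1$, so one cannot differentiate under the integral in that original form; the change-of-variables step is precisely what moves the $\bx$-dependence into a smooth kernel against which $f$ is merely integrated.
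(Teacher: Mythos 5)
Your proof is correct and rests on the same key idea as the paper's: the change of variables $\bz=\bvarphi_\delta(\bx)+(\delta\radius)\by$ (legitimate by Lemma~\ref{Lem:shrinking_Lipschitz}\eqref{Item1:smoothness_phi_delta}) that moves all $\bx$-dependence off the merely $L^1$ function $f$ and into the jointly $C^\infty$ kernel $\kappa_\delta$. The packaging is slightly different: the paper carries out the change of variables to show $\calK_\delta\upg f$ is Lipschitz, then returns to the ball representation, performs a formal chain rule and integration by parts to express $\GRAD(\calK_\delta\upg f)$ with derivatives placed on $\rho$ only, and concludes by induction; whereas you stay with the kernel representation $\int_\Dom \kappa_\delta(\bx,\bz)f(\bz)\dif z$ throughout and invoke the classical differentiation-under-the-integral theorem with a dominated convergence argument, which yields all orders of derivatives and their continuous extension to $\overline\Dom$ in one stroke. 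Both arguments are sound; yours is arguably the more streamlined, since it sidesteps the order-by-order induction and makes explicit that only the joint smoothness of the kernel and $f\in L^1$ are used.
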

\begin{proof} 
 Owing to \eqref{Assumption_polA_delta} and \eqref{def_calK_delta}, and
 using the Leibniz product rule, 
  it suffices to show that the statement holds for $\calK_\delta\upg$. 
Let $f\in L^1(\Dom)$.
Let us prove first that $\calK_\delta\upg f$ is continuous.
Let $\bx$ and $\bz$ be two points in $\Dom$.
Up to  appropriate changes of variable we have
\begin{align*}
\calK_\delta\upg f(\bx) - \calK_\delta\upg f(\bz) & = \frac{1}{(\delta \radius)^{d}} \int_\Dom
\left(\rho\!\left(\tfrac{\by-\bvarphi_\delta(\bx)}{\delta\radius}\right)
-     \rho\!\left(\tfrac{\by-\bvarphi_\delta(\bz)}{\delta\radius}\right)\right)
f(\by)\dif y,
\end{align*}
where we replaced $\bvarphi_{\delta}(\bx) + (\delta \radius)B(\bzero,1)$ and
$\bvarphi_{\delta}(\bz) + (\delta \radius)B(\bzero,1)$ by $\Dom$ and used that
$\rho$ is zero outside the unit ball $B(\bzero,1)$.  The uniform Lipschitz
continuity of $\rho$ and $\bvarphi_\delta$ implies that there is $c$
such that $|\rho\left(\frac{\by-\bvarphi_\delta(\bx)}{\delta\radius}\right) -
\rho\left(\frac{\by-\bvarphi_\delta(\bz)}{\delta\radius}\right) |\le
\frac{c}{\delta \radius} \|\bx-\bz\|_{\ell^2}$. As a result, we infer that
\[
|\calK_\delta\upg f(\bx) - \calK_\delta\upg f(\bz)|
\le c (\delta \radius)^{-d-1} \|f\|_{L^1(\Dom)} \|\bx-\bz\|_{\ell^2},
\]
which proves that $\calK_\delta\upg f$ is Lipschitz continuous; hence
$\calK_\delta\upg f$ is uniformly continuous.  This proves that
$\calK_\delta\upg f\in C^0(\Dom;\Real)$ and $\calK_\delta\upg f$
admits a continuous extension to $\overline\Dom$. Let us now evaluate
the gradient of $\calK_\delta\upg f$. Using the chain rule, we infer
that
\begin{align*}
\GRAD (\calK_\delta\upg f)(\bx) &= \int_{B(\bzero,1)}
\rho(\by) \Jac_\delta\tr(\bx) (\GRAD f)(\bvarphi_\delta(\bx) + (\delta \radius) \by)\dif y\\
&=  \Jac_\delta\tr(\bx)  \int_{B(\bzero,1)}
\rho(\by)(\GRAD f)(\bvarphi_\delta(\bx) + (\delta \radius) \by)\dif y\\
&=  \Jac_\delta\tr(\bx) (\delta \radius)^{-1} \int_{B(\bzero,1)}
\rho(\by)\GRAD \big(f(\bvarphi_\delta(\bx) + (\delta \radius) \by)\big)\dif y\\
&=  - \Jac_\delta\tr(\bx) (\delta \radius)^{-1}  \int_{B(\bzero,1)}
\GRAD \rho(\by) f(\bvarphi_\delta(\bx) + (\delta \radius) \by)\dif y.
\end{align*}
We can then conclude that $\GRAD (\calK_\delta\upg f)$ is Lipschitz
continuous by using the same argument as above and continue the
argument by induction.
\end{proof}

\subsection{Examples}\label{Sec:Examples_without_BC}

Let $f\in L^1(\Dom;\Real)$ and $\bg\in
L^1(\Dom;\Real^d)$. 
Following \cite{Schoberl_2001} and \cite[\S3]{Schoberl_2008}, we define the following families of mollification operators:
\begin{subequations}\label{eq:calK}\begin{align}
(\calK_\delta\upg f)(\bx) &= \int_{B(\bzero,1)}
\rho(\by)  f(\bvarphi_\delta(\bx) + (\delta \radius) \by)\dif y, \label{def_calKg}\\
(\calK_\delta\upc \bg)(\bx) &= \int_{B(\bzero,1)}
\rho(\by)  \Jac_\delta\tr(\bx) \bg(\bvarphi_\delta(\bx) + (\delta \radius) \by)\dif y, \label{def_calKc}\\
(\calK_\delta\upd \bg)(\bx) &= \int_{B(\bzero,1)}
\rho(\by)  \det(\Jac_\delta(\bx)) \Jac_\delta^{-1}(\bx) \bg(\bvarphi_\delta(\bx) + (\delta \radius) \by)
\dif y, \label{def_calKd}\\
(\calK_\delta\upb f)(\bx) &= \int_{B(\bzero,1)}
\rho(\by) \det(\Jac_\delta(\bx)) f(\bvarphi_\delta(\bx) + (\delta \radius) \by)\dif y,\label{def_calKi}
\end{align}\end{subequations}
for all $\bx\in \Dom$. The superscripts in~\eqref{eq:calK} refer to
the fact that these operators are used to build projections onto
finite element spaces that are conforming in the graph space of the
gradient, curl, or divergence operator,  or onto a broken finite
element space (with no conformity requirement),
see Theorem~\ref{Th:Pih} below. The transformations involving $\Jac_\delta$ are related to the classical Piola transformations. Furthermore, the functions
$\calK_\delta\upc \bg$, $\calK_\delta\upd \bg$, $\calK_\delta\upb f$
are of the form \eqref{def_calK_delta} with $\polA_\delta\upg(\bx)= 1$
and
\begin{equation} \label{eq:exp_polA}
\polA_\delta\upc(\bx)=\Jac_\delta\tr(\bx),\quad
\polA_\delta\upd(\bx)=\det(\Jac_\delta(\bx)) \Jac_\delta^{-1}(\bx),\quad
\polA_\delta\upb(\bx)=\det(\Jac_\delta(\bx))\hspace{-10pt}
\end{equation}
Property (iii) from Lemma~\ref{Lem:shrinking_Lipschitz} implies that
\eqref{Assumption_polA_delta} holds true in the above three cases.
Let $p\in[1,\infty]$. Assuming $d=3$, we define
\begin{subequations}\begin{align}
Z\upgp(\Dom)&=\{f\in L^{p}(\Dom)\st \GRAD f\in \bL^{p}(\Dom)\},\label{eq:def_Wupgp}\\
\bZ\upcp(\Dom)&=\{\bv\in \bL^{p}(\Dom) \st \ROT \bv\in \bL^{p}(\Dom)\},\label{eq:def_Wupcp}\\
\bZ\updp(\Dom)&=\{\bv\in \bL^{p}(\Dom)\st \DIV \bv\in L^{p}(\Dom)\}. \label{eq:def_Wupdp}
\end{align}\end{subequations}
\begin{lemma}[Commuting with differential operators] \label{Lem:Kdelta_commutes}
The following holds:
\berom
\item \label{Item1:Lem:Kdelta_commutes}
$\GRAD \calK_\delta\upg f =\calK_\delta\upc \GRAD f$, for all $f\in Z\upgp(\Dom)$,
\item \label{Item2:Lem:Kdelta_commutes}
$\ROT \calK_\delta\upc \bg =\calK_\delta\upd \ROT \bg$, for all $\bg\in \bZ\upcp(\Dom)$,
\item \label{Item3:Lem:Kdelta_commutes}
$\DIV \calK_\delta\upd \bg =\calK_\delta\upb \DIV \bg$, for all $\bg\in \bZ\updp(\Dom)$,
\eerom
\ie the following diagram commutes:
\begin{equation}
\begin{diagram}[height=1.7\baselineskip,width=1.5cm]
Z\upgp(\Dom)  & \rTo^{\GRAD} & \bZ\upcp(\Dom) & \rTo^{\ROT} &  \bZ\updp(\Dom)
& \rTo^{\DIV} & L^p(\Dom) \\
\dTo_{\calK\upg_\delta} & & \dTo_{\calK\upc_\delta} & & \dTo_{\calK\upd_\delta}& & \dTo_{\calK\upb_\delta} \\
C^\infty(\Dom) & \rTo^{\GRAD} & \bC^\infty(\Dom) & \rTo^{\ROT} & \bC^\infty(\Dom) & \rTo^{\DIV} & C^\infty(\Dom) 
\end{diagram}
\label{Diag:W_C_Kdelta}
\end{equation}
\end{lemma}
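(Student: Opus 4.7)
The approach is to recognize each of the four operators as an average, against $\rho(\by)\dif y$, of a classical Piola pullback. Fix $\by\in B(\bzero,1)$ and set $\bm{\psi}_{\delta,\by}(\bx):=\bvarphi_\delta(\bx)+(\delta\radius)\by$; by property~\eqref{Item2:smoothness_phi_delta} of Lemma~\ref{Lem:shrinking_Lipschitz} with $t=1$, $\bm{\psi}_{\delta,\by}$ maps $\Dom$ into $\Dom$, and its Jacobian is $\Jac_\delta(\bx)$, independent of $\by$. The integrands defining $\calK_\delta\upg f$, $\calK_\delta\upc\bg$, $\calK_\delta\upd\bg$, and $\calK_\delta\upb f$ are then, respectively, the scalar, covariant Piola, contravariant Piola, and volume-form pullbacks of $f$ or $\bg$ via $\bm{\psi}_{\delta,\by}$. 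For smooth $f,\bg$, the classical Piola identities (pullback commutes with the exterior derivative) give pointwise, for every $\by$,
\begin{align*}
\GRAD_\bx\bigl[f\circ \bm{\psi}_{\delta,\by}\bigr] &= \Jac_\delta\tr\,(\GRAD f)\circ \bm{\psi}_{\delta,\by},\\
\ROT_\bx\bigl[\Jac_\delta\tr\,\bg\circ \bm{\psi}_{\delta,\by}\bigr] &= \det(\Jac_\delta)\,\Jac_\delta^{-1}\,(\ROT\bg)\circ \bm{\psi}_{\delta,\by},\\
\DIV_\bx\bigl[\det(\Jac_\delta)\,\Jac_\delta^{-1}\,\bg\circ \bm{\psi}_{\delta,\by}\bigr] &= \det(\Jac_\delta)\,(\DIV\bg)\circ \bm{\psi}_{\delta,\by}.
\end{align*}
Multiplying by $\rho(\by)$, integrating over $B(\bzero,1)$, and exchanging the differential operators with the integral over $\by$ (legitimate by Lemma~\ref{Lem:Kdelta_Cinfty}) then yields the three commutation relations for smooth arguments.

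To extend to arbitrary $f\in Z\upgp(\Dom)$, $\bg\in\bZ\upcp(\Dom)$, or $\bg\in\bZ\updp(\Dom)$, I would first handle $p<\infty$ by density of smooth functions in each of these graph spaces on Lipschitz $\Dom$, combined with the straightforward $L^p$-boundedness of the operators $\calK_\delta^\sharp$, a consequence of their definitions together with the uniform bounds on $\Jac_\delta$ supplied by Lemma~\ref{Lem:shrinking_Lipschitz}. The case $p=\infty$ then follows immediately: since $\Dom$ is bounded, $L^\infty(\Dom)\hookrightarrow L^p(\Dom)$ for every finite $p$, so each identity holds almost everywhere on $\Dom$; by Lemma~\ref{Lem:Kdelta_Cinfty} both sides are continuous on $\Dom$, hence equality holds pointwise.

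The main obstacle will be the careful verification of the Piola identities for $\ROT$ and $\DIV$, which hinge on cofactor-matrix algebra and on the vanishing identities $\ROT\GRAD=\bzero$ and $\DIV\ROT=0$, and are more delicate than the plain chain rule used for $\GRAD$. A secondary technical point concerns density of smooth functions in $\bZ\upcp(\Dom)$ and $\bZ\updp(\Dom)$ for Lipschitz $\Dom$; this can either be quoted from the literature or sidestepped entirely by first transferring the differential operator onto $\rho$ (in the spirit of the proof of Lemma~\ref{Lem:Kdelta_Cinfty}) and then back onto $f$ or $\bg$ via a change of variables and integration by parts, a procedure that treats every $p\in[1,\infty]$ uniformly without any density argument.
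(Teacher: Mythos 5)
Your proposal follows the same route as the paper: fix $\by$, recognize the integrand as a Piola pullback along $\trans(\bx)=\bvarphi_\delta(\bx)+\delta\radius\by$, invoke the three Piola commutation identities, and integrate against $\rho(\by)\dif y$. The paper's proof is considerably terser---it simply asserts the three identities as ``simple consequences of the chain rule'' and stops---whereas you flag the genuine technical points that this glosses over: the exchange of the differential operator with the $\by$-integral, the fact that the Piola identities are pointwise identities for smooth fields but only distributional identities for $\bg$ merely in $\bZ\upcp$ or $\bZ\updp$ (where $\bg$ need not be in $\bW^{1,p}$), and the case $p=\infty$.

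One caution on your first extension route: density of $\bC^\infty(\overline\Dom)$ in $\bZ\upcp(\Dom)$ and $\bZ\updp(\Dom)$ for a Lipschitz domain is exactly what Theorem~\ref{Thm:Kdeltaf_to_f_in_LP} and Corollary~\ref{Cor:Kdeltaf_to_f_in_LP} establish using the present lemma, so appealing to it here risks circularity unless you quote it from independent literature. Your second route---passing to the convolution form, moving the derivative onto $\rho$, and then transferring it back onto $f$ or $\bg$ by a change of variables and a distributional integration by parts (legitimate because $\rho((\SCAL-\bvarphi_\delta(\bx))/\delta\radius)$ is compactly supported in $\Dom$ by Lemma~\ref{Lem:shrinking_Lipschitz}\eqref{Item1:smoothness_phi_delta})---is cleaner, avoids circularity, and handles all $p\in[1,\infty]$ in one stroke. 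If you write the proof out, this is the mechanism to use; for the $\ROT$ and $\DIV$ cases the cofactor/Piola algebra enters through the identity $\det(\Jac_\delta)\Jac_\delta^{-1}=\mathrm{cof}(\Jac_\delta)\tr$ and the divergence-free property of the cofactor rows, but the structure of the computation is the same as for $\GRAD$.
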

\begin{proof}
Upon setting $\trans(\bx) = \bvarphi_\delta(\bx) + \delta \radius \by$,
these identities are simple consequences of the chain rule:
\begin{align*}
\GRAD(f{\circ}\trans)(\bx) &= \Jac_\delta\tr(\bx) (\GRAD f)(\trans(\bx)), \\
\ROT(\Jac_\delta\tr(\bx) (\bg{\circ}\trans))(\bx) &= \det(\Jac_\delta(\bx)) \Jac_K^{-1}(\bx) (\ROT \bg)(\trans(\bx)), \\
\DIV(\det(\Jac_\delta(\bx)) \Jac_\delta^{-1} (\bg{\circ}\trans))(\bx) &=
\det(\Jac_\delta(\bx)) (\DIV \bg)(\trans(\bx)).
\end{align*}
This completes the proof.
\end{proof}

\subsection{Convergence} \label{Sec:Convergence}
We now show that the smoothing operators defined above 
have interesting approximation properties.
Owing to Lemma~\ref{Lem:shrinking_Lipschitz},
$\Jac_\delta$ and $\Jac_\delta^{-1}$ converge uniformly to the
identity, and $\det(\Jac_\delta)$ converges uniformly to $1$. As a result,
there is $\delta_0\in (0,1]$ such that $\|\Jac_\delta-\polI\|_{\ell^2}\le \frac12$,  $\|\Jac_\delta^{-1}\|_{\ell^2}\le 2$, and $|\det(\Jac_\delta^{-1})|\le 2^d$, for all $\delta\in [0,\delta_0]$ and all $\bx\in\Dom$. 
 
\begin{theorem}[Convergence] \label{Thm:Kdeltaf_to_f_in_LP} The sequence
  $(\calK_\delta)_{\delta\in [0,\delta_0]}$ is uniformly bounded in
  $\calL(L^p;L^p):=\calL(L^p(\Dom;\Real^q);L^p(\Dom;\Real^q))$ 
for all $p\in [0,\infty]$. Moreover, for $p\in [1,\infty)$,
  $\|\calK_\delta f - f\|_{L^p(\Dom;\Real^q)}\to 0$ as $\delta \to 0$ for
  all $f\in L^p(\Dom;\Real^q)$. 
\end{theorem}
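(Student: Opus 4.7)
The plan is to reduce to a single operator, establish uniform $L^p$-boundedness by Jensen and a change of variables, and then invoke a density argument to upgrade to strong convergence.

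First, I would reduce to analysing $\calK_\delta\upg$ component by component. Indeed, from the definition \eqref{def_calK_delta} and hypothesis \eqref{Assumption_polA_delta} with $\ell=0$, one has $\|\polA_\delta\|_{L^\infty(\Dom)}\le 1+c\delta_0$, so it is enough to show that for every $p\in[1,\infty]$ the scalar operator $\calK_\delta\upg$ is uniformly bounded on $L^p(\Dom;\Real)$ and that $\calK_\delta\upg f\to f$ in $L^p(\Dom;\Real)$ for all $f\in L^p(\Dom;\Real)$ when $p<\infty$. The vector case then follows by applying these properties componentwise and using that $\polA_\delta\to\polI$ uniformly.

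For the uniform bound, the case $p=\infty$ is immediate since $\int\rho=1$. For $p\in[1,\infty)$ I would apply Jensen's inequality to the probability measure $\rho(\by)\dif y$ to get
\begin{equation*}
|\calK_\delta\upg f(\bx)|^p \le \int_{B(\bzero,1)}\rho(\by)\,|f(\bvarphi_\delta(\bx)+(\delta\radius)\by)|^p\dif y,
\end{equation*}
integrate over $\Dom$, exchange integrals via Fubini, and perform the change of variable $\bz=\bvarphi_\delta(\bx)+(\delta\radius)\by$ at fixed $\by$. By the discussion preceding the theorem, for $\delta\in[0,\delta_0]$ the map $\bvarphi_\delta$ is a $C^\infty$ diffeomorphism onto its image with $|\det\Jac_\delta(\bx)|^{-1}\le 2^d$, and by \eqref{map_varphi_delta_compact_inclusion} the image of $\Dom$ under this change of variable lies inside $\Dom$. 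This gives $\int_\Dom|f(\bvarphi_\delta(\bx)+(\delta\radius)\by)|^p\dif x\le 2^d\|f\|_{L^p(\Dom)}^p$, uniformly in $\by\in B(\bzero,1)$ and $\delta$, and hence the uniform bound on $\calK_\delta\upg$.

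For the strong convergence I would use a standard three-epsilon argument. Density of $C^0(\overline\Dom)$ in $L^p(\Dom)$ for $p<\infty$ plus the uniform $L^p$-bound reduces the problem to continuous $f$. For such $f$, the key estimate
\begin{equation*}
\|\bvarphi_\delta(\bx)+(\delta\radius)\by-\bx\|_{\ell^2}\le \delta\bigl(\|\bj\|_{L^\infty(\Dom)}+\radius\bigr),\qquad \forall\,\bx\in\Dom,\ \forall\,\by\in B(\bzero,1),
\end{equation*}
together with the uniform continuity of $f$ on $\overline\Dom$ and $\int\rho=1$, yields $\calK_\delta\upg f\to f$ uniformly on $\overline\Dom$, hence in $L^p(\Dom)$. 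The main technical point to watch is the change of variables in the boundedness step, but Lemma~\ref{Lem:shrinking_Lipschitz}(ii) and the choice of $\delta_0$ make $\bvarphi_\delta$ a diffeomorphism with Jacobian uniformly bounded above and below, so this step is essentially routine.
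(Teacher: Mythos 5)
Your proposal is correct and follows the same overall architecture as the paper (reduce to the scalar operator $\calK_\delta\upg$, prove uniform $L^p$-boundedness, then upgrade by density), but the two central steps use different, though equivalent, tools. For the uniform bound, the paper proves the cases $p=1$ and $p=\infty$ separately and then invokes the Riesz--Thorin interpolation theorem to cover all $p\in[1,\infty]$; you instead apply Jensen's inequality directly to the probability measure $\rho(\by)\,\dif y$ and then change variables, which gives the $L^p$-bound in a single stroke for each $p$ without any interpolation machinery. This is a genuinely more elementary route and arguably cleaner. For the strong convergence, the paper approximates $f\in L^p$ by Lipschitz functions and uses the quantitative bound $|\calK_\delta\upg f(\bx)-f(\bx)|\le c\,L_f\,\delta$, whereas you approximate by $C^0(\overline\Dom)$ functions and use uniform continuity; both yield the same conclusion, with the paper's choice having the side-benefit of exhibiting an explicit $O(\delta)$ rate on the dense class (a fact it reuses implicitly in the density step). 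One point worth making explicit in your change-of-variables step: the inclusion $\bvarphi_\delta(\Dom)+(\delta\radius)\by\subset\Dom$ for $\by\in B(\bzero,1)$ is exactly \eqref{map_varphi_delta_compact_inclusion}, and the lower bound on $|\det\Jac_\delta|$ that makes the Jacobian factor $\le 2^d$ is guaranteed by the choice of $\delta_0$ made just before the theorem; you cite both, so the argument is complete.
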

\begin{proof}
  Owing to \eqref{Assumption_polA_delta} and \eqref{def_calK_delta},
  it suffices to show that the statement holds for $\calK_\delta\upg$.
  (1) We show first that $\calK_\delta\upg$ is uniformly bounded in
  $\calL(L^p;L^p)$ by using the Riesz--Thorin interpolation theorem. The
  statement is evident for $p=\infty$ with constant $c=1$. 
  Now consider $f\in L^1(\Dom;\Real)$, then
\begin{align*}
\|\calK_\delta\upg f \|_{L^1(\Dom)} &\le \int_\Dom \int_{B(\bzero,1)} 
\rho(\by) |f(\bvarphi_\delta(\bx) + (\delta \radius)\by)|\dif y \dif x \\
& \le
\int_{B(\bzero,1)}  \rho(\by) \int_{\Dom}
 |f(\bvarphi_\delta(\bx) + (\delta \radius)\by)|\dif x \dif y \\
& \le \int_{B(\bzero,1)} \rho(\by) \int_{\bvarphi_\delta(\Dom) + (\delta \radius)\by} |f(\bz)| |\det(\Jac_\delta(\bz))|^{-1}\dif z \dif y
\le c \|f\|_{L^1(\Dom)},
\end{align*}
since $\delta\le\delta_0$. 
The Riesz--Thorin interpolation theorem implies that
$\|\calK_\delta\upg f \|_{L^p(\Dom)} \le c^{\frac1p}
\|f\|_{L^p(\Dom)}$,
so that $\|\calK_\delta\upg f \|_{L^p(\Dom)} \le c_1 \|f\|_{L^p(\Dom)}$, with $c_1=\max(1,c)$.\\
(2) Assume first that $f$ is smooth over $\Dom$, 
say uniformly Lipschitz with Lipschitz constant $L_f$, \ie $|f(\bx)-f(\bz)|\le L_f \|\bx-\bz\|_{\ell^2}$. Then, 
\begin{align*}
\left|\calK_\delta\upg f(\bx) - f(\bx)\right| & = \left|\int_{B(\bzero,1)} \rho(\by)
\big(f(\bvarphi_\delta(\bx) + (\delta\radius) \by) - f(\bx)\big)\dif y\right|\\ & \le
\int_{B(\bzero,1)} \rho(\by) L_f \|\bvarphi_\delta(\bx) - \bx + (\delta\radius) \by)
\|_{\ell^2}\dif y \le c\, L_f \delta.
\end{align*}
In conclusion, there is $c_0=c\max(1,|D|)$ such that 
$\|\calK_\delta\upg f - f\|_{L^p(\Dom)}\le c_0\, L_f \delta.$\\
(3) We conclude by using a density argument and the triangle
inequality. Let $f\in L^p(\Dom)$.  The space of uniformly Lipschitz
functions being dense in $L^p(\Dom)$, there is a sequence of uniformly Lipschitz
functions $(f_n)_{n\in \Natural}$ such that $\|f_n-f\|_{L^p(\Dom)}\to
0$ as $n\to \infty$. Then
\begin{align*}
\|\calK_\delta\upg f - f\|_{L^p(\Dom)} 
& \le \|\calK_\delta\upg (f - f_n)\|_{L^p(\Dom)} + \|\calK_\delta\upg f_n - f_n\|_{L^p(\Dom)} 
+ \|f_n - f\|_{L^p(\Dom)} \\
&\le c_1  \|f - f_n\|_{L^p(\Dom)} + c_0 L_{f_n} \delta + \|f_n - f\|_{L^p(\Dom)}. 
\end{align*}
Let $\epsilon>0$ and let $n(\epsilon)$ be large enough so that
$\|f_{n(\epsilon)} - f\|_{L^p(\Dom)}\le \epsilon$.  Setting
$\delta_0(\epsilon) = \epsilon/L_{f_{n(\epsilon)}}$, we have
$\|\calK_\delta\upg f - f\|_{L^p(\Dom)} \le \epsilon(c_1 + c_0 + 1) $
for all $\delta\le \delta_0(\epsilon)$.  In conclusion, for all
$\epsilon>0$, there is $\delta_0(\epsilon)$ such that
$\|\calK_\delta\upg f - f\|_{L^p(\Dom)} \le \epsilon(c_1 + c_0 + 1) $
for all $\delta\le \delta_0(\epsilon)$, which proves that
$\|\calK_\delta\upg f - f\|_{L^p(\Dom)} \to 0$ as $\delta \to 0$.
\end{proof}

We now can state a result that shows that the above smoothing
technique is superior to mollification alone, \ie contrary to the
result originally stated by Friedrichs (see \eg
\cite[Thm~9.2]{Brezis:11}), strong convergence on the derivatives now
occurs over the entire domain $\Dom$.
\begin{corollary}[Convergence of derivatives] \label{Cor:Kdeltaf_to_f_in_LP}
Let $p\in [1,\infty)$. Then,
\begin{subequations}\begin{align}
\lim_{\delta\to 0}\|\GRAD(\calK\upg_\delta f - f)\|_{\bL^{p}(\Dom)} = 0, 
& \quad \forall f\in  Z\upgp(\Dom),\label{Grad_convergence}\\
\lim_{\delta\to 0}\|\ROT(\calK\upc_\delta \bg - \bg)\|_{\bL^{p}(\Dom)} = 0, 
& \quad \forall \bg\in  \bZ\upcp(\Dom),\label{Rot_convergence}\\
\lim_{\delta\to 0}\|\DIV(\calK\upd_\delta \bg - \bg)\|_{L^{p}(\Dom)} = 0, 
& \quad \forall \bg\in \bZ\updp(\Dom). \label{Div_convergence}
\end{align}\end{subequations}
\end{corollary}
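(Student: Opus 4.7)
The plan is to reduce each of the three convergence statements to a direct application of Theorem~\ref{Thm:Kdeltaf_to_f_in_LP} by exploiting the commuting property established in Lemma~\ref{Lem:Kdelta_commutes}. The whole point of the commuting diagram~\eqref{Diag:W_C_Kdelta} is precisely that derivatives of mollified fields are mollified derivatives, so once the differential operator is pushed inside, the statement becomes the $L^p$-convergence of a Piola-type mollification applied to an $L^p$-datum.

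Concretely, for~\eqref{Grad_convergence} I would write, for $f\in Z\upgp(\Dom)$, the chain
\begin{equation*}
\GRAD(\calK_\delta\upg f - f) = \GRAD \calK_\delta\upg f - \GRAD f = \calK_\delta\upc(\GRAD f) - \GRAD f,
\end{equation*}
using Lemma~\ref{Lem:Kdelta_commutes}(i). Since $\GRAD f \in \bL^p(\Dom)$ by definition of $Z\upgp(\Dom)$ and $\calK_\delta\upc$ falls into the generic framework~\eqref{def_calK_delta} with $\polA_\delta\upc = \Jac_\delta\tr$ satisfying~\eqref{Assumption_polA_delta} (as noted after~\eqref{eq:exp_polA}), Theorem~\ref{Thm:Kdeltaf_to_f_in_LP} gives $\|\calK_\delta\upc(\GRAD f) - \GRAD f\|_{\bL^p(\Dom)}\to 0$. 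The same template handles~\eqref{Rot_convergence} via Lemma~\ref{Lem:Kdelta_commutes}(ii) and the operator $\calK_\delta\upd$ (with $\polA_\delta\upd=\det(\Jac_\delta)\Jac_\delta^{-1}$), and~\eqref{Div_convergence} via Lemma~\ref{Lem:Kdelta_commutes}(iii) and $\calK_\delta\upb$ (with $\polA_\delta\upb=\det(\Jac_\delta)$).

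There is no real obstacle here: the work has been done upstream, in verifying that the three Piola-type variants fit the generic form~\eqref{def_calK_delta} with $\polA_\delta$-fields satisfying~\eqref{Assumption_polA_delta} (already noted after~\eqref{eq:exp_polA}), and in establishing the commutation identities by the chain rule (already proved in Lemma~\ref{Lem:Kdelta_commutes}). The only subtlety worth flagging in the writeup is that $p<\infty$ is essential because Theorem~\ref{Thm:Kdeltaf_to_f_in_LP} only asserts strong convergence in that range; this matches the hypothesis of the corollary. The proof therefore collapses to three one-line invocations of the commuting diagram followed by Theorem~\ref{Thm:Kdeltaf_to_f_in_LP} applied to $\GRAD f$, $\ROT \bg$, and $\DIV \bg$ respectively.
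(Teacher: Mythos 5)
Your argument is correct and coincides with the paper's own proof: apply Lemma~\ref{Lem:Kdelta_commutes} to rewrite $\GRAD\calK_\delta\upg f$ as $\calK_\delta\upc\GRAD f$ (and analogously for the curl and divergence), then invoke the $L^p$-convergence from Theorem~\ref{Thm:Kdeltaf_to_f_in_LP} applied to $\GRAD f$, $\ROT\bg$, $\DIV\bg$. Your additional remarks about the $\polA_\delta$-fields and the restriction $p<\infty$ are accurate but simply make explicit what the paper leaves implicit.
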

\begin{proof}
  Using Lemma~\ref{Lem:Kdelta_commutes} we infer that
  $\GRAD \calK_\delta\upg f = \calK_\delta\upc \GRAD f$, and
  Theorem~\ref{Thm:Kdeltaf_to_f_in_LP} implies that
  $\calK_\delta\upc \GRAD f \to \GRAD f$ in $\bL^p(\Dom)$ as
  $\delta\to 0$, which proves \eqref{Grad_convergence}.  A similar
  argument holds for \eqref{Rot_convergence} and
  \eqref{Div_convergence}.
\end{proof}

\subsection{Convergence rate}
We now establish convergence rates.
\begin{theorem}[Convergence rate] \label{Th:approx_Kdelta_W1p}
 There is $c$ such that
\begin{equation}
  \|\calK_\delta f -f\|_{L^p(\Dom;\Real^q)}\le c\,\delta^s 
  |f|_{W^{s,p}(\Dom;\Real^q)},
\end{equation} 
for all $f\in W^{s,p}(\Dom;\Real^q)$, all $\delta\in
[0,\delta_0]$, and all $s\in (0,1]$, $p\in [1,\infty)$ or $s=1$, $p\in [1,\infty]$.
\end{theorem}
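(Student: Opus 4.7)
The plan is to reduce the estimate to the scalar mollification operator $\calK_\delta\upg$ and then split according to whether $s=1$ (treated by a first-order Taylor expansion along a segment lying in $\Dom$) or $s\in(0,1)$ (treated via the intrinsic Gagliardo seminorm).

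\emph{Step 1: Reduction to $\calK_\delta\upg$.} Write componentwise
\begin{equation*}
\calK_\delta f - f = (\polA_\delta-\polI)\,(\calK_\delta\upg f_1,\ldots,\calK_\delta\upg f_q)\tr + (\calK_\delta\upg f_1 - f_1,\ldots,\calK_\delta\upg f_q - f_q)\tr.
\end{equation*}
Assumption \eqref{Assumption_polA_delta} gives $\|\polA_\delta-\polI\|_{\ell^2}\le c\,\delta$ uniformly, and Theorem~\ref{Thm:Kdeltaf_to_f_in_LP} gives $L^p$-stability of $\calK_\delta\upg$, so the first summand contributes at most $c\,\delta\,\|f\|_{L^p(\Dom;\Real^q)}$. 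It therefore suffices to prove the scalar bound $\|\calK_\delta\upg f_i - f_i\|_{L^p(\Dom)}\le c\,\delta^{s}|f_i|_{W^{s,p}(\Dom)}$ for each component.

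\emph{Step 2: Integer case $s=1$.} Set $\boldsymbol{h}(\bx,\by):=\bvarphi_\delta(\bx)-\bx+\delta\radius\by$. Lemma~\ref{Lem:shrinking_Lipschitz}\eqref{Item3:smoothness_phi_delta} gives $\|\boldsymbol{h}(\bx,\by)\|_{\ell^2}\le c\,\delta$ uniformly. For $f\in C^1(\overline\Dom)$ (the general case follows by density),
\begin{equation*}
f(\bx+\boldsymbol{h}(\bx,\by)) - f(\bx) = \int_0^1 (\GRAD f)(\bx+t\boldsymbol{h}(\bx,\by))\SCAL\boldsymbol{h}(\bx,\by)\,\dif t .
\end{equation*}
For $p=\infty$, this yields the bound $c\,\delta\,|f|_{W^{1,\infty}(\Dom)}$ pointwise. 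For $p<\infty$, substitute into \eqref{def_calKg}, apply Minkowski's integral inequality to pull the $\by$- and $t$-integrations outside the $L^p(\Dom)$-norm, and change variables $\bx\mapsto\bx+t\boldsymbol{h}(\bx,\by)$. Lemma~\ref{Lem:shrinking_Lipschitz}\eqref{Item2:smoothness_phi_delta} keeps the image in $\Dom$ for all $t\in[0,1]$, $\by\in B(\bzero,1)$, $\delta\in[0,\delta_0]$, and the Jacobian $\polI+tD\boldsymbol{h}$ has determinant bounded away from zero (again by Lemma~\ref{Lem:shrinking_Lipschitz}\eqref{Item3:smoothness_phi_delta}), producing $\|\calK_\delta\upg f - f\|_{L^p(\Dom)}\le c\,\delta\,|f|_{W^{1,p}(\Dom)}$.

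\emph{Step 3: Fractional case $s\in(0,1)$, $p\in[1,\infty)$.} Since $\rho\,\dif y$ is a probability measure, Jensen's inequality applied to \eqref{def_calKg} gives
\begin{equation*}
|\calK_\delta\upg f(\bx) - f(\bx)|^p \le \int_{B(\bzero,1)}\rho(\by)\,|f(\bvarphi_\delta(\bx)+\delta\radius\by)-f(\bx)|^p\,\dif y .
\end{equation*}
Integrate over $\bx\in\Dom$ and, for each $\bx$, change variables $\by\mapsto\bz=\bvarphi_\delta(\bx)+\delta\radius\by$ (valid and with image in $\Dom$ by Lemma~\ref{Lem:shrinking_Lipschitz}\eqref{Item1:smoothness_phi_delta}), which produces a factor $(\delta\radius)^{-d}$. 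Since $\|\bz-\bx\|_{\ell^2}\le c\,\delta$ throughout, multiplying and dividing by $\|\bz-\bx\|_{\ell^2}^{d+sp}$ and extending the $\bz$-integration to all of $\Dom$ yields
\begin{equation*}
\|\calK_\delta\upg f - f\|_{L^p(\Dom)}^p \le c\,\delta^{sp}\int_\Dom\!\!\int_\Dom\frac{|f(\bz)-f(\bx)|^p}{\|\bz-\bx\|_{\ell^2}^{d+sp}}\,\dif z\,\dif x = c\,\delta^{sp}\,|f|_{W^{s,p}(\Dom)}^p .
\end{equation*}

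The main technical delicacy is confined to Step~2: one must know that the segment $\{\bx+t\boldsymbol{h}(\bx,\by)\}_{t\in[0,1]}$ stays in $\Dom$ and that the associated map has a uniformly invertible Jacobian, both of which are precisely the content of the geometric Lemma~\ref{Lem:shrinking_Lipschitz} encoded through the transversal vector field of \S\ref{Sec:mollification_Lipschitz}. The fractional case, by contrast, reduces to a direct measure-theoretic manipulation once the containment property \eqref{Item1:smoothness_phi_delta} is in hand.
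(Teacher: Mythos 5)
Your proof is correct and follows essentially the same route as the paper: reduction to the scalar operator $\calK_\delta\upg$, a Gagliardo-seminorm/change-of-variables estimate for $s\in(0,1)$, and a segment-Taylor-expansion argument for $s=1$ using Lemma~\ref{Lem:shrinking_Lipschitz}\eqref{Item2:smoothness_phi_delta} to keep the segment in $\Dom$. The only structural difference is that you inline the $s=1$ estimate (via Minkowski's integral inequality and a change of variables with Jacobian near $\polI$), whereas the paper factors precisely this argument out as the separate Lemma~\ref{Lem:fpsilambda_minus_f_in_w1p} and then applies it to $\bpsi_\delta(\bx)=\bvarphi_\delta(\bx)+\delta\radius\by$ with $\by$ fixed — the underlying computation is identical.
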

\begin{proof}Owing to \eqref{Assumption_polA_delta} and \eqref{def_calK_delta},
  it suffices to show that the statement holds for $\calK_\delta\upg$. Assume first that $p<\infty$.\\
(1)  Let $f\in W^{s,p}(\Dom)$ with $s\in (0,1)$. We estimate $\calK_\delta\upg f - f$ in
  $L^p(\Dom)$ as follows: 
\begin{multline*}
\|\calK_\delta\upg f - f\|_{L^p(\Dom)}^p  =
\int_\Dom\bigg|\int_{B(\bzero,1)} \rho(y)\big(f(\bvarphi_\delta(\bx)
+(\delta\radius)\by) - f(\bx)\big)\dif y\bigg|^p \dif x \\ 
 \le c
\int_{B(\bzero,1)} \int_\Dom \frac{\left|f(\bvarphi_\delta(\bx) +(\delta\radius)\by) -
f(\bx)\right|^p}{\|\bvarphi_\delta(\bx) +(\delta\radius)\by -\bx\|_{\ell^2}^{sp+d}} 
\|\bvarphi_\delta(\bx) +(\delta\radius)\by -\bx\|_{\ell^2}^{sp+d} \dif x \dif y.
\end{multline*}
Let us make the change of variables
$B(\bzero,1)\ni \by \mapsto \bz =\bvarphi_\delta(\bx) +
(\delta\radius)\by \in \bvarphi_\delta(\Dom) + \delta\radius B(\bzero,1)
\subset \Dom$.
Observe that the Jacobian of this transformation is bounded from above
by $\delta\radius$ and
\[
\|\bvarphi_\delta(\bx) +
(\delta\radius)\by  - \bx\|_{\ell^2} \le \|\bvarphi_\delta(\bx) -\bx\|_{\ell^2} 
+ \delta\radius\|\by\|_{\ell^2}\le c\, \delta.
\]
Hence,
\begin{align*}
\|\calK_\delta\upg f - f\|_{L^p(\Dom)}^p  \le c\, \delta^{sp+d} \delta^{-d}
\int_{\Dom} \int_\Dom \frac{\left|f(\bz) -
f(\bx)\right|^p}{\|\bz-\bx\|_{\ell^2}^{sp+d}} \dif x \dif z
\le c\, \delta^{sp} |f|_{W^{s,p}(\Dom)}^p.
\end{align*}
(2) Let $f\in W^{1,p}(\Dom)$. By proceeding as above we infer that 
\begin{align*}
\|\calK_\delta\upg f - f\|_{L^p(\Dom)}^p & \le c
\int_{B(\bzero,1)} \int_\Dom \left|f(\bvarphi_\delta(\bx) +(\delta\radius)\by) -
f(\bx)\right|^p\dif x \dif y.
\end{align*}
Let us fix $y\in B(\bzero,1)$ and define the mapping $\bpsi_\delta:\Dom\ni
\bx\mapsto \bvarphi_\delta(\bx) + (\delta\radius)\by \in \bvarphi_\delta(\Dom) +
\delta\radius B(\bzero,1)\subset \Dom$. Observe that 
\[
\|\bpsi_\delta(\bx) - \bx\|_{\ell^2} \le \|\bvarphi_\delta(\bx) -\bx\|_{\ell^2} 
+ \delta\radius\|\by\|_{\ell^2}\le c\, \delta,\quad
\|D\bpsi_\delta(\bx) -\polI\| = \|D\bvarphi_\delta(\bx) -\polI\| \le c\, \delta,
\]
and
$\bx+t(\bpsi_\delta(\bx) -\bx) = \bx+t(\bvarphi_\delta(\bx)+\delta\radius \by
-\bx) \in \Dom$,
\ie $\bpsi_\delta$ satisfies the assumptions of
Lemma~\ref{Lem:fpsilambda_minus_f_in_w1p} below. Hence,
$\int_\Dom \left|f(\bvarphi_\delta(\bx) +(\delta\radius)\by) -
  f(\bx)\right|^p\dif x\le c\, \delta^p \|\GRAD f\|_{\bL^p(\Dom)}^p$.
We conclude that $\|\calK_\delta\upg f - f\|_{L^p(\Dom)} \le c \,
\delta \|\GRAD f\|_{\bL^p(\Dom)}$. \\
(3) The case $s=1$, $p=\infty$ is treated similarly to (2).
\end{proof}

\begin{corollary}[Convergence rate on derivatives] 
Let $s\in (0,1)$, $p\in [1,\infty)$ or $s=1$, $p\in [1,\infty]$. Then,
\label{Cor:Kdeltaf_to_f_in_LP_rate} there is $c$ such that 
\begin{alignat*}{2}
\|\GRAD(\calK\upg_{\delta} f - f)\|_{\bL^{p}(\Dom)} &\le c \, \delta^s |\GRAD f|_{\bW^{s,p}(\Dom)}, 
& \quad &\forall f\in  \bset v \in L^p(\Dom)\st \GRAD v\in \bW^{s,p}(\Dom)\eset, \\
\|\ROT(\calK\upc_\delta \bg - \bg)\|_{\bL^{p}(\Dom)} &\le c\, \delta^s |\ROT \bg|_{\bW^{s,p}(\Dom)},
& \quad &\forall \bg\in  \bset \bv\in \bL^p(\Dom) \st \ROT \bv \in  \bW^{s,p}(\Dom)\eset,\\
\|\DIV(\calK\upd_\delta \bg - \bg)\|_{L^{p}(\Dom)} &\le c\, \delta^s |\DIV \bg|_{W^{s,p}(\Dom)},
& \quad &\forall \bg\in \bset \bv \in \bL^p(\Dom)\st \DIV\bv \in W^{s,p}(\Dom)\eset.
\end{alignat*}
\end{corollary}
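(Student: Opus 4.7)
The proof is essentially a one-line reduction to Theorem~\ref{Th:approx_Kdelta_W1p} via the commuting-diagram identities of Lemma~\ref{Lem:Kdelta_commutes}. The plan is to rewrite each derivative of the mollification error as a mollification error applied to the derivative of the original field, then invoke the already-established convergence rate on that derivative, which by hypothesis lies in the relevant Sobolev space $W^{s,p}$.

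Concretely, for the first estimate I would proceed as follows. Given $f \in L^p(\Dom)$ with $\GRAD f\in \bW^{s,p}(\Dom)$, first observe that since $\Dom$ is bounded, $\bW^{s,p}(\Dom)\hookrightarrow \bL^p(\Dom)$, so $f\in Z\upgp(\Dom)$. Then item~\eqref{Item1:Lem:Kdelta_commutes} of Lemma~\ref{Lem:Kdelta_commutes} yields
\begin{equation*}
\GRAD(\calK\upg_\delta f - f) \;=\; \calK\upc_\delta (\GRAD f) - \GRAD f.
\end{equation*}
Applying Theorem~\ref{Th:approx_Kdelta_W1p} to the field $\GRAD f \in \bW^{s,p}(\Dom)$ (with the relevant $\polA_\delta = \polA_\delta\upc$, which satisfies~\eqref{Assumption_polA_delta}) immediately gives
\begin{equation*}
\|\calK\upc_\delta (\GRAD f) - \GRAD f\|_{\bL^p(\Dom)} \le c\,\delta^s |\GRAD f|_{\bW^{s,p}(\Dom)},
\end{equation*}
which is the desired bound.

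The other two estimates follow by the same two-step recipe, using items~\eqref{Item2:Lem:Kdelta_commutes} and~\eqref{Item3:Lem:Kdelta_commutes} of Lemma~\ref{Lem:Kdelta_commutes} to replace $\ROT(\calK\upc_\delta \bg - \bg)$ by $\calK\upd_\delta(\ROT \bg) - \ROT \bg$ and $\DIV(\calK\upd_\delta \bg - \bg)$ by $\calK\upb_\delta(\DIV \bg) - \DIV \bg$, respectively, and then invoking Theorem~\ref{Th:approx_Kdelta_W1p}. The hypotheses in each case ensure the input field to $\calK_\delta$ belongs to the appropriate $W^{s,p}$ space, so the theorem applies with no additional work.

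There is essentially no obstacle: the whole content is packaged into the commuting diagram~\eqref{Diag:W_C_Kdelta} together with the scalar convergence rate of Theorem~\ref{Th:approx_Kdelta_W1p}. The only minor verification is that the matrix fields $\polA_\delta\upc$, $\polA_\delta\upd$, $\polA_\delta\upb$ defined in~\eqref{eq:exp_polA} all satisfy the smoothness assumption~\eqref{Assumption_polA_delta}, which has already been noted immediately after~\eqref{eq:exp_polA} as a consequence of item~\eqref{Item3:smoothness_phi_delta} of Lemma~\ref{Lem:shrinking_Lipschitz}. Hence the proof reduces to a short display for each of the three cases.
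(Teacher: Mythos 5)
Your proposal is correct and is essentially identical to the paper's own proof: both rewrite $\GRAD(\calK\upg_\delta f - f)$ as $\calK\upc_\delta(\GRAD f) - \GRAD f$ via Lemma~\ref{Lem:Kdelta_commutes} and then invoke Theorem~\ref{Th:approx_Kdelta_W1p} applied to $\GRAD f$, with the two remaining estimates handled verbatim. You are slightly more careful in citing the seminorm $|\GRAD f|_{\bW^{s,p}}$ from Theorem~\ref{Th:approx_Kdelta_W1p} (the paper's display writes the full norm $\|\GRAD f\|_{\bW^{s,p}}$, a harmless overstatement) and in spelling out why $f\in Z\upgp(\Dom)$, but the route is the same.
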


\begin{proof}  Let $f\in \bset v \in L^p(\Dom)\st \GRAD v\in \bW^{s,p}(\Dom)\eset$, then
\begin{align*}
\|\GRAD (\calK_{\delta}\upg f -f)\|_{\bL^p(\Dom)} &=
\|\calK_{\delta}\upc \GRAD f -\GRAD f\|_{\bL^p(\Dom)} 
&& \text{since $\GRAD\calK_{\delta}\upg = \calK_{\delta}\upc\GRAD$} \\
& \le c\, \delta^s \|\GRAD f\|_{\bW^{s,p}(\Dom)} &&\text{owing to Theorem~\ref{Th:approx_Kdelta_W1p}}.
\end{align*}
Proceed similarly for the two other estimates.
\end{proof}

\begin{lemma}[Approximation] \label{Lem:fpsilambda_minus_f_in_w1p} Let $\lambda_0>0$,
  and assume that
  $\bpsi_\lambda:\Dom\to
  \Dom$
  is a diffeomorphism of class $C^1$ such that
  $\|\bpsi_\lambda(\bx)-\bx\|_{\ell^2}\le c'\, \lambda$ and
  $\|D\bpsi_\lambda(\bx)-\polI\|_{\ell^2} \le \frac12$ for all $\bx\in \Dom$ and all
  $\lambda\in [0,\lambda_0]$.  Assume also that the mapping
  $\bmu_{\lambda,t}: \bx\mapsto \bx+t(\bpsi_\lambda(\bx)-\bx)$
  maps $\Dom$ into $\Dom$ for all $t\in [0,1]$ and all
  $\lambda\in [0,\lambda_0]$. Then, there is $c$ such that the
  following holds:
\[
\|f\circ\bpsi_\lambda -f\|_{L^p(\Dom)} \le c\, \lambda \|\GRAD f\|_{\bL^p(\Dom)},
\]
for all $\lambda\in [0,\lambda_0]$, all $f\in W^{1,p}(\Dom)$, and all $p\in [1, \infty]$. 
\end{lemma}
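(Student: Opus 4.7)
The plan is to reduce to smooth $f$ by density and then apply the fundamental theorem of calculus along the segment joining $\bx$ to $\bpsi_\lambda(\bx)$; the resulting line integral is controlled by a change of variables whose Jacobian is uniformly bounded thanks to the hypothesis $\|D\bpsi_\lambda-\polI\|_{\ell^2}\le\frac12$.

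For $p\in[1,\infty)$, density of $C^1(\overline\Dom)$ in $W^{1,p}(\Dom)$ on the Lipschitz domain $\Dom$ reduces the proof to smooth $f$, provided the composition operator $f\mapsto f\circ\bpsi_\lambda$ is uniformly bounded in $L^p(\Dom)$ (established below as a byproduct). For $p=\infty$, I would instead identify $W^{1,\infty}(\Dom)$ with the Lipschitz functions on $\overline\Dom$ and argue directly from $|f(\bpsi_\lambda(\bx))-f(\bx)|\le \|\GRAD f\|_{\bL^\infty(\Dom)}\|\bpsi_\lambda(\bx)-\bx\|_{\ell^2}\le c'\lambda\|\GRAD f\|_{\bL^\infty(\Dom)}$. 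For smooth $f$, the key identity, legitimized by the hypothesis $\bmu_{\lambda,t}(\bx)\in\Dom$ for all $t\in[0,1]$, is
\[
f(\bpsi_\lambda(\bx))-f(\bx)=\int_0^1 \GRAD f(\bmu_{\lambda,t}(\bx))\cdot(\bpsi_\lambda(\bx)-\bx)\,dt,
\]
which, combined with $\|\bpsi_\lambda(\bx)-\bx\|_{\ell^2}\le c'\lambda$, yields the pointwise bound $|f(\bpsi_\lambda(\bx))-f(\bx)|\le c'\lambda\int_0^1|\GRAD f(\bmu_{\lambda,t}(\bx))|\,dt$. Taking $L^p(\Dom)$-norms and applying Minkowski's integral inequality gives
\[
\|f\circ\bpsi_\lambda - f\|_{L^p(\Dom)}\le c'\lambda\int_0^1 \Bigl(\int_\Dom |\GRAD f(\bmu_{\lambda,t}(\bx))|^p\,dx\Bigr)^{1/p}dt.
\]

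For each $t\in[0,1]$ I would then change variables $\by=\bmu_{\lambda,t}(\bx)$. The Jacobian matrix $D\bmu_{\lambda,t}(\bx)=\polI+t(D\bpsi_\lambda(\bx)-\polI)$ satisfies $\|D\bmu_{\lambda,t}-\polI\|_{\ell^2}\le\frac12$, so by a Neumann series argument it is uniformly invertible with $\|D\bmu_{\lambda,t}^{-1}\|_{\ell^2}\le 2$ and $|\det D\bmu_{\lambda,t}|\ge 2^{-d}$. Since $\bmu_{\lambda,t}(\Dom)\subset\Dom$, this leads to $\int_\Dom|\GRAD f(\bmu_{\lambda,t}(\bx))|^p\,dx\le 2^d\|\GRAD f\|_{\bL^p(\Dom)}^p$ uniformly in $\lambda$ and $t$; inserting this bound in the previous display finishes the argument and simultaneously provides the $L^p$-boundedness required in the density reduction. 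The main obstacle is justifying this change of variables rigorously: a priori $\bmu_{\lambda,t}$ is only a local diffeomorphism of $\Dom$. I would handle this either by the quantitative inverse function theorem (since $\bmu_{\lambda,t}$ is a $C^1$-perturbation of the identity with small derivative, it is globally injective on $\Dom$ for $\lambda$ small enough) or, more robustly, by the area formula, whose multiplicity is controlled by a constant depending only on $d$ because of the uniform two-sided bound on $|\det D\bmu_{\lambda,t}|$.
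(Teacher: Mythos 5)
Your proof follows essentially the same route as the paper: reduce to smooth $f$ by density, apply the fundamental theorem of calculus along the segment $t\mapsto\bmu_{\lambda,t}(\bx)$ (legitimate since $\bmu_{\lambda,t}(\bx)\in\Dom$), then control the resulting integral by a change of variables with uniformly bounded Jacobian. Using Minkowski's integral inequality instead of Jensen's inequality is an equivalent way to split the $t$- and $\bx$-integrals, and your direct treatment of $p=\infty$ via the Lipschitz characterization is exactly what the paper means by ``treated similarly.''

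You do correctly flag a point the paper passes over silently: the paper simply asserts that $\bmu_{\lambda,t}$ is invertible, whereas the hypotheses only guarantee that $\bmu_{\lambda,0}=\mathrm{id}$ and $\bmu_{\lambda,1}=\bpsi_\lambda$ are diffeomorphisms, not the intermediate maps. Raising this is a genuine improvement. However, neither of your two proposed remedies is airtight as stated. The area-formula claim --- that a uniform two-sided bound on $|\det D\bmu_{\lambda,t}|$ controls the multiplicity by a constant depending only on $d$ --- is false in general: a lower bound on the Jacobian determinant does not bound the number of preimages (covering-type maps have arbitrary multiplicity with unit Jacobian). The inverse-function-theorem route needs the $C^0$ smallness $\|\bmu_{\lambda,t}(\bx)-\bx\|\le c'\lambda$ (the $C^1$ bound $\|D\bmu_{\lambda,t}-\polI\|\le\frac12$ does not shrink with $\lambda$), together with a quantitative ``local convexity at small scales'' property of $\Dom$, to conclude global injectivity on $\Dom$ --- and this is not automatic for an arbitrary Lipschitz domain with re-entrant features. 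The cleanest way to close the gap, which is what makes the argument work in the paper's actual application, is to observe that there $\bpsi_\lambda$ is the restriction to $\Dom$ of a smooth near-identity map defined on all of $\Real^d$ (since $\bj\in\bC^\infty(\Real^d)$), so $\bmu_{\lambda,t}$ extends to a global diffeomorphism of a ball containing $\overline\Dom$ for $\lambda$ small, from which injectivity on $\Dom$ is immediate. If you want the lemma to stand abstractly as stated, you should add the hypothesis that $\bmu_{\lambda,t}$ is injective (or that $\bpsi_\lambda$ extends to a diffeomorphism of a neighborhood of $\overline\Dom$).
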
 
\begin{proof} 
(1) Assume first that $f$ is smooth. Let $\bx\in\Dom$ and $v(t):=
  f(\bmu_{\lambda,t}(\bx))$ with $t\in [0,1]$.  The chain rule implies that
  $v'(t) = Df(\bmu_{\lambda,t}(\bx))(\bpsi_\lambda(\bx)-\bx)$, thereby showing that
\[
f(\bpsi_\lambda(\bx)) -f(\bx)=\int_0^1 v'(t)\dif t = \int_0^1
Df(\bmu_{\lambda,t}(\bx)) (\bpsi_\lambda(\bx)-\bx) \dif t.
\]
Then, assuming that $p<\infty$, we infer that
\begin{align*}
\|f{\circ}\bpsi_\lambda -f\|_{L^p(\Dom)}^p &\le \int_\Dom
\|\bpsi_\lambda(\bx)-\bx\|_{\ell^2}^p \int_0^1 \|\GRAD
f(\bmu_{\lambda,t}(\bx))\|_{\ell^2}^p\dif t\dif x\\
& \le c'\, \lambda^p \int_0^1 \int_\Dom
\|\GRAD f(\bmu_{\lambda,t}(\bx))\|_{\ell^2}^p\dif t\dif x.
\end{align*}
The assumptions on $\bpsi_\lambda$ imply that the map $\bmu_{\lambda,t}$ is
invertible and $\|D\bmu_{\lambda,t}^{-1}\|_{\ell^2}\le 2$,
$|\det(D\bmu_{\lambda,t}^{-1})|\le 2^d$. As a result,
\begin{align*}
\|f\circ\bpsi_\lambda -f\|_{L^p(\Dom)}^p &\le c'\, \lambda^p \int_0^1
\int_{\Dom} \|\GRAD f(\bz)\|_{\ell^2}^p |\det(D\bmu_{\lambda,t}^{-1})| \dif
z\dif t,
\end{align*}
which finally implies that there is $c_0$ so that $\|f\circ \bpsi_\lambda -f\|_{L^p(\Dom)} 
\le c_0\, \lambda\|\GRAD f\|_{\bL^p(\Dom)}.$
The case $p=\infty$ is treated similarly.
\\
(2) If $f$ is not smooth, we deduce from
Corollary~\ref{Cor:Kdeltaf_to_f_in_LP} that there exists a sequence of
smooth functions converging to $f$ in $W^{1,p}(\Dom)$, \ie for all
$\epsilon>0$, there is a smooth function $f_\epsilon$ such that
$\|f-f_\epsilon\|_{L^p(\Dom)}\le \epsilon$ and
$\|\GRAD f_\epsilon\|_{\bL^p(\Dom)} \le 2 \|\GRAD f\|_{\bL^p(\Dom)}$.
Then
\begin{align*}
\|f\circ\bpsi_\lambda - f\|_{L^p(\Dom)}& \le \|(f-f_\epsilon)\circ\bpsi_\lambda\|_{L^p(\Dom)}
+ \|f_\epsilon\circ\bpsi_\lambda - f_\epsilon\|_{L^p(\Dom)} + \|f_\epsilon -f\|_{L^p(\Dom)} \\
& \le  c \epsilon + 2 c_0 \lambda \|\GRAD f\|_{\bL^p(\Dom)}+ \epsilon.
\end{align*}
The conclusion follows readily since $\epsilon$ is arbitrary.
\end{proof}

\section{Mollification with extension by zero} \label{Sec:Mollification_zero}
Note that the function $\calK_\delta f$ defined in
\eqref{def_calK_delta} does not satisfy any particular boundary
condition.  For instance, even if $f$ is zero on $\front$,
$(\calK_\delta f)_{|\front}$ is not necessarily zero.  Since preserving
boundary conditions may be useful in some applications, we now
construct a mollifier that has this
property. Let $C_0^\infty(\Dom;\Real^q)$ denote the space of
$\Real^q$-valued functions that are of class $C^\infty$ and of compact
support in $\Dom$. Consider the mapping $\bvartheta_\delta$ and the constant $\zeta$
defined in \eqref{map_vartheta_delta_Lipschitz} and
\eqref{map_vartheta_delta_compact_inclusion}.
Let $\polK_\delta(\bx)$ denote
the Jacobian matrix of $\bvartheta_\delta$ at $\bx\in \Dom$.

\subsection{Mollification}
For any $g\in L^1(\Dom;\Real^q)$,  $q\in\Natural$ with $q\ge1$, we denote by $\tg$ the extension by
zero of $g$ over $\Real^d$, \ie $\tg(\bx)=g(\bx)$ if $\bx\in \Dom$ and
$\tg(\bx)=0$ otherwise.  Taking inspiration from
\citet{Bonito_guermond_luddens_2015}, we introduce
\begin{align}
(\calK_{\delta,0}\upg f)(\bx) &:= \int_{B(\bzero,1)}
\rho(\by)  \tf(\bvartheta_\delta(\bx) + (\delta \zeta) \by)\dif y,\label{def_calK_delta_upg_zero}\\
(\calK_{\delta,0} g)(\bx) &:=  \polB_\delta(\bx) (\calK_{\delta,0}\upg g_1(\bx),\ldots,\calK_{\delta,0}\upg g_q(\bx))\tr,
\label{def_calK_delta_zero}
\end{align}
for all $\bx\in \Dom$, all $f\in L^1(\Dom;\Real)$, and all
$g=(g_1,\ldots,g_q)\tr \in L^1(\Dom;\Real^q)$,
where $\polB_\delta$ is a smooth
$\Real^{q\CROSS q}$-valued field (related to the Jacobian $\polK_\delta$ of $\bvartheta_\delta$) such that for all $l\in \polN$, there is $c$ such that
\begin{equation}\label{eq:bound_polB}
\sup_{\bx \in \Dom}\|D^l(\polB_\delta(\bx) -\polI)\|_{\ell^2} \le c\, \delta.
\end{equation}
\begin{lemma}[Smoothness and boundary
  condition] \label{Lem:Kdelta_Cinfty_zero} 
 $\calK_{\delta,0} (g)$ is in $C_0^\infty(\Dom;\Real^q)$ for all
  $g\in L^1(\Dom;\Real^q)$ and all $\delta\in (0,1]$.
\end{lemma}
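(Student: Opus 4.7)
The plan is to reduce to the scalar operator $\calK_{\delta,0}\upg$, to prove $C^\infty$ regularity exactly as in Lemma~\ref{Lem:Kdelta_Cinfty}, and then to use Lemma~\ref{Lem:enlarging_Lipschitz}(iii) to show that the support sits at positive distance from $\front$.

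First, since $\polB_\delta\in C^\infty(\Dom;\Real^{q\CROSS q})$ by~\eqref{eq:bound_polB} and pointwise multiplication respects both smoothness and supports, the Leibniz rule reduces the statement to showing that $\calK_{\delta,0}\upg f\in C_0^\infty(\Dom)$ for every scalar $f\in L^1(\Dom)$. The smoothness part then follows from the same argument as in Lemma~\ref{Lem:Kdelta_Cinfty}: after a change of variables one has
\[
(\calK_{\delta,0}\upg f)(\bx) = (\delta\zeta)^{-d}\int_{\Real^d}
\rho\!\left(\tfrac{\by - \bvartheta_\delta(\bx)}{\delta\zeta}\right)\tilde{f}(\by)\dif y,
\]
with $\tilde{f}\in L^1(\Real^d)$ and a kernel that is $C^\infty$ in $\bx$ with derivatives of every order uniformly bounded (since $\rho$ is compactly supported and $\bvartheta_\delta\in C^\infty$ has bounded derivatives on any bounded set containing $\overline\Dom$). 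Differentiation under the integral sign, or equivalently the Lipschitz induction of Lemma~\ref{Lem:Kdelta_Cinfty}, gives $\calK_{\delta,0}\upg f\in C^\infty(\overline\Dom)$.

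The core of the statement is the compact support. Let $L_\delta$ denote a finite Lipschitz constant for $\bvartheta_\delta$ on a bounded neighborhood of $\overline\Dom$, and set $\eta_\delta := \delta\zeta/L_\delta > 0$. For any $\bx\in\Dom$ with $\mathrm{dist}(\bx,\front)<\eta_\delta$, pick $\bx^*\in\front$ with $\|\bx-\bx^*\|_{\ell^2}<\eta_\delta$, so that $\|\bvartheta_\delta(\bx)-\bvartheta_\delta(\bx^*)\|_{\ell^2}<\delta\zeta$. Since $\front\subset\partial\calO\subset\overline\calO$, Lemma~\ref{Lem:enlarging_Lipschitz}(iii) then yields, for every $\by\in B(\bzero,1)$,
\[
\bvartheta_\delta(\bx) + (\delta\zeta)\by \;\in\; \bvartheta_\delta(\bx^*) + B(\bzero,2\delta\zeta)
\;\subset\; \bvartheta_\delta(\overline\calO) + B(\bzero,2\delta\zeta) \;\subset\; \calO.
\]
As $\calO\cap\overline\Dom=\emptyset$, the extension $\tilde{f}$ vanishes at this point, hence $(\calK_{\delta,0}\upg f)(\bx)=0$. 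Therefore $\calK_{\delta,0} g$ is supported in the closed and bounded (hence compact) set $\{\bx\in\overline\Dom:\mathrm{dist}(\bx,\front)\ge\eta_\delta\}\subset\interior(\Dom)$, which is the desired compact support in $\Dom$.

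The step I expect to be most delicate is transferring the $2\delta\zeta$ exterior buffer provided by Lemma~\ref{Lem:enlarging_Lipschitz}(iii) — which is phrased for points of $\overline\calO$ — to interior points of $\Dom$ approaching $\front$; this is precisely what forces the Lipschitz-based choice of $\eta_\delta=\delta\zeta/L_\delta$, after which everything else is routine.
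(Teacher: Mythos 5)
Your proof is correct and follows essentially the same route as the paper: reduce to the scalar operator $\calK_{\delta,0}\upg$, invoke the smoothness argument from Lemma~\ref{Lem:Kdelta_Cinfty}, and then show that for points within a Lipschitz-scaled margin $\eta_\delta$ of $\front$ the averaging ball lands in $\calO$, so $\tf$ vanishes there by Lemma~\ref{Lem:enlarging_Lipschitz}(iii). The paper phrases the margin as $\delta\zeta/(1+\delta\kappa)$ with $\kappa$ the Lipschitz constant of $\bk$, which is the same quantity as your $\delta\zeta/L_\delta$ since $L_\delta = 1+\delta\kappa$ is a Lipschitz constant for $\bvartheta_\delta=\mathrm{Id}+\delta\bk$; this is purely notational.
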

\begin{proof}
  The smoothness has already been proved in
  Lemma~\ref{Lem:Kdelta_Cinfty}.  Let $\kappa$ be the Lipschitz
  constant of the field $\bk$ over
  $\overline{\Dom}$. Let $\epsilon_\delta = \delta \zeta/(1+\delta\kappa)$. Let $\bx\in \Dom$
    be such that $\text{dist}(\bx,\front)< \epsilon_\delta$. Then,
    there exists a point $\bz\in \front$ such that
    $\text{dist}(\bx,\bz)\le \epsilon_\delta $, \ie
\begin{align*}
  \bvartheta_\delta(\bx) + B(\bzero,\delta\zeta) 
  & = \bvartheta_\delta(\bz) + B(\bzero,\delta\zeta) + \bvartheta_\delta(\bx) - \bvartheta_\delta(\bz)\\
  & = \bvartheta_\delta(\bz) + B(\bzero,\delta\zeta) + \bx - \bz + \delta (\bk(\bx) - \bk(\bz)) \\
  & \subset  \bvartheta_\delta(\bz) + B(\bzero,\delta\zeta) + B(\bzero,\epsilon_\delta + \delta \kappa \epsilon_\delta) \\
  & = \bvartheta_\delta(\bz) + B(\bzero,\delta\zeta + (1+\delta \kappa)\epsilon_\delta) 
    = \bvartheta_\delta(\bz) + B(\bzero,2\delta\zeta) \\ 
  &\subset \bvartheta_\delta(\overline{\calO}) + B(\bzero,2\delta \zeta) \subset \calO,
\end{align*}
owing to Lemma~\ref{Lem:enlarging_Lipschitz}(iii).
This implies that
$\bvartheta_\delta(\bx) + (\delta \zeta) \by  \subset \calO$
for all $\by\in B(\bzero,1)$, so that $(\calK_{\delta,0}\upg (f))(\bx) =0$
since $f(\bvartheta_\delta(\bx) + (\delta \zeta) \by)=0$ for all
$\by\in B(\bzero,1)$.  Hence, the support of $\calK_{\delta,0}\upg$ is
compact in $\Dom$. The same conclusion applies to $\calK_{\delta,0}$.
\end{proof}

\subsection{Examples} Let us proceed as in
\S\ref{Sec:Examples_without_BC}.
Let $\polB_\delta\upc(\bx)=\polK_\delta\tr(\bx)$,
$\polB_\delta\upd(\bx)=\det(\polK_\delta(\bx)) \polK_\delta^{-1}(\bx)$, and
$\polB_\delta\upb(\bx)=\det(\polK_\delta(\bx))$. Lemma~\ref{Lem:enlarging_Lipschitz}(ii)  
implies that \eqref{eq:bound_polB} holds for these choices of $\polB_\delta$.
Let $f\in L^1(\Dom;\Real)$ and $\bg\in L^1(\Dom;\Real^d)$. We define the following families of mollification operators:
\begin{subequations}\begin{align}
(\calK_{\delta,0}\upg f)(\bx) &= \int_{B(\bzero,1)}
\rho(\by)  \tf(\bvartheta_\delta(\bx) + (\delta \zeta) \by)\dif y, \label{def_calKg_zero}\\
(\calK_{\delta,0}\upc \bg)(\bx) &= \int_{B(\bzero,1)}
\rho(\by)  \polK_\delta\tr(\bx) \btg(\bvartheta_\delta(\bx) + (\delta \zeta) \by)\dif y, \label{def_calKc_zero}\\
(\calK_{\delta,0}\upd \bg)(\bx) &= \int_{B(\bzero,1)}
\rho(\by)  \det(\polK_\delta(\bx)) \polK_\delta^{-1}(\bx) \btg(\bvartheta_\delta(\bx) + (\delta \zeta) \by)
\dif y, \label{def_calKd_zerà}\\
(\calK_{\delta,0}\upb f)(\bx) &= \int_{B(\bzero,1)}
\rho(\by) \det(\polK_\delta(\bx)) \tf(\bvartheta_\delta(\bx) + (\delta \zeta) \by)\dif y,\label{def_calKi_zero}
\end{align}\end{subequations}
for all $\bx\in \Dom$. 
Let $p\in[1,\infty]$. If $d=3$, we define
\begin{subequations}\begin{align}
\tZ\upgp(\Dom)&= \bset f\in L^p(\Dom)\st \GRAD \tf\in \bL^p(\Real^d)\eset, \\ 
\btZ\upcp(\Dom)&= \bset \bv\in \bL^p(\Dom)\st \ROT \btv\in \bL^p(\Real^d)\eset, \\
\btZ\updp(\Dom)&=\bset \bv\in \bL^p(\Dom)\st \DIV \btv\in L^p(\Real^d)\eset.
\end{align}\end{subequations}
\begin{lemma}[Commuting extension and derivatives] \label{Lem:tilde_commutes_with_diff_operators} The following holds:
\berom
\item $\GRAD \tf = \widetilde{\GRAD f}$, for all $f\in \tZ\upgp(\Dom)$,
\item $\ROT \btg = \widetilde{\ROT \bg}$, for all $\bg\in \btZ\upcp(\Dom)$,
\item $\DIV \btg = \widetilde{\DIV \bg}$,  for all $\bg\in \btZ\updp(\Dom)$.
\eerom
\end{lemma}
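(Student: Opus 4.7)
The key observation is that the assumptions defining $\tilde{Z}\upgp$, $\btilde{\bZ}\upcp$, $\btilde{\bZ}\updp$ amount to requiring that the distributional derivative of the zero-extended function on $\Real^d$ is a regular $L^p$ function — which rules out any singular contribution supported on $\front$. Once this is recognized, each identity is proved by testing against $C_c^\infty$ objects whose supports are strictly inside $\Dom$ or strictly outside $\overline{\Dom}$.

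My plan for (i): fix $f\in \tilde{Z}\upgp(\Dom)$. By definition, $\tf\in L^p(\Real^d)$ and the distributional gradient $\GRAD \tf$ is an element of $\bL^p(\Real^d)$. For any test field $\bphi\in \bC_c^\infty(\Real^d)$ we have, from the very definition of distributional derivative,
\[
\int_{\Real^d} \GRAD \tf \cdot \bphi \dif x = -\int_{\Real^d} \tf\,\DIV\bphi \dif x = -\int_\Dom f\, \DIV\bphi \dif x.
\]
Specializing first to $\bphi\in \bC_c^\infty(\Dom)$, the right-hand side coincides with $\langle \GRAD f,\bphi\rangle_{\calD'(\Dom),\calD(\Dom)}$, so $(\GRAD \tf)|_\Dom = \GRAD f$ as distributions on $\Dom$, hence almost everywhere in $\Dom$ (since both sides are locally integrable). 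Specializing next to $\bphi\in \bC_c^\infty(\Real^d\setminus \overline\Dom)$, the right-hand side vanishes because $f=0$ on the support of $\bphi$, yielding $\GRAD \tf=\bzero$ almost everywhere on $\Real^d\setminus \overline\Dom$. Since $\front$ has Lebesgue measure zero in $\Real^d$, these two statements imply $\GRAD\tf = \widetilde{\GRAD f}$ as elements of $\bL^p(\Real^d)$.

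The proofs of (ii) and (iii) follow the same two-region testing argument, with $\DIV\bphi$ replaced by $\ROT\bphi$ and $\GRAD\phi$ respectively, and using the identities
\[
\int_{\Real^d} \ROT \btg \cdot \bphi \dif x = \int_{\Real^d} \btg \cdot \ROT\bphi \dif x = \int_\Dom \bg\cdot\ROT\bphi \dif x,
\]
\[
\int_{\Real^d} \DIV\btg\, \phi \dif x = -\int_{\Real^d} \btg\cdot\GRAD \phi \dif x = -\int_\Dom \bg\cdot\GRAD\phi \dif x,
\]
valid for $\bphi\in\bC_c^\infty(\Real^d)$ and $\phi\in C_c^\infty(\Real^d)$ respectively, under the hypothesis that $\ROT \btg \in \bL^p(\Real^d)$, resp.\ $\DIV \btg\in L^p(\Real^d)$. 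The same splitting of the test-function support — once strictly inside $\Dom$, once strictly outside $\overline\Dom$ — shows that the two distributions coincide pointwise almost everywhere on each side, and hence globally.

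There is no serious obstacle here: the statement is essentially a reformulation of the hypothesis, provided one is careful about what it means to say that the distributional derivative of $\tf$ on all of $\Real^d$ belongs to $L^p$. The only subtle point to make explicit is that the absence of a singular boundary contribution — which would otherwise give rise to a Dirac-type term concentrated on $\front$ when integrating by parts across the boundary — is exactly what is encoded by the hypotheses $\GRAD\tf\in \bL^p(\Real^d)$, $\ROT\btg\in\bL^p(\Real^d)$, $\DIV\btg\in L^p(\Real^d)$.
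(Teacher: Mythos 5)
Your argument is correct and follows the paper's own proof almost verbatim: both split the test functions into those supported strictly inside $\Dom$ and those supported strictly outside $\overline{\Dom}$, use the distributional-derivative identity together with $\tf=0$ off $\Dom$, and conclude by noting the two regions exhaust $\Real^d$ up to a null set. The only cosmetic difference is that you make the measure-zero observation about $\front$ explicit, whereas the paper leaves it implicit.
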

\begin{proof}
   Let $f\in \tZ\upgp(\Dom)$ and let $\bphi\in \bC_0^\infty(\Real^d)$ be
  a (vector-valued) smooth function compactly supported in
  $\interior(\Real^d{\setminus}\Dom)$. Then,
\begin{align*}
\int_{\Real^d} \bphi\SCAL \GRAD \tf \dif x = - \int_{\Real^d} \tf \DIV\bphi \dif x 
= - \int_{\Dom} f \DIV\bphi \dif x =0.
\end{align*}
Since $\bphi$ is arbitrary, this proves that $\GRAD\tf$ is zero in $\Real^d{\setminus}\Dom$.
Now let $\bphi\in \bC_0^\infty(\Dom)$, then
\begin{align*}
  -\int_\Dom \bphi\SCAL \GRAD \tf \dif x  = -\int_{\Real^d} \bphi\SCAL \GRAD \tf \dif x 
  = \int_{\Real^d} \tf \DIV\bphi \dif x 
  = \int_{\Dom} f \DIV\bphi \dif x = -\int_\Dom \bphi \SCAL \GRAD f \dif x.
\end{align*}
Since $\bphi$ is arbitrary, this proves that
$(\GRAD \tf)_{|\Dom} = \GRAD f$. We have thus proved that
$\GRAD\tf = \widetilde{\GRAD f}$.  The argument for the other two
equalities is identical.
\end{proof}

\begin{lemma}[Commuting with differential operators] \label{Lem:Kdelta_commutes_zero}
The following holds: 
\berom
\item \label{Item1:Lem:Kdelta_commutes_0}
 $\GRAD \calK_{\delta,0}\upg f =\calK_{\delta,0}\upc \GRAD f$, for all
  $f\in \tZ\upgp(\Dom)$,
\item \label{Item2:Lem:Kdelta_commutes_0}
$\ROT \calK_{\delta,0}\upc \bg =\calK_{\delta,0}\upd \ROT \bg$, for all
  $\bg\in \btZ\upcp(\Dom)$,
\item \label{Item3:Lem:Kdelta_commutes_0}
  $\DIV \calK_{\delta,0}\upd \bg =\calK_{\delta,0}\upb \DIV \bg$, for
  all $\bg\in \btZ\updp(\Dom)$,  
\eerom
  \ie the following diagram commutes:
\begin{equation}
\begin{diagram}[height=1.7\baselineskip,width=1.5cm]
\tZ\upgp(\Dom)  & \rTo^{\GRAD} & \btZ\upcp(\Dom) & \rTo^{\ROT} &  \btZ\updp(\Dom)
& \rTo^{\DIV} & L^p(\Dom) \\
\dTo_{\calK\upg_{\delta,0}} & & \dTo_{\calK\upc_{\delta,0}} & & \dTo_{\calK\upd_{\delta,0}}& & \dTo_{\calK\upb_{\delta,0}} \\
C_0^\infty(\Dom) & \rTo^{\GRAD} & \bC_0^\infty(\Dom) & \rTo^{\ROT} & \bC_0^\infty(\Dom) & \rTo^{\DIV} & C_0^\infty(\Dom)
\end{diagram}
\end{equation}
\end{lemma}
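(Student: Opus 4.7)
The plan is to adapt the chain-rule computation of Lemma~\ref{Lem:Kdelta_commutes}, with Lemma~\ref{Lem:tilde_commutes_with_diff_operators} providing the crucial bridge that identifies the weak derivative of a zero-extension with the zero-extension of the derivative.

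I focus on (i); items (ii) and (iii) will follow by the same pattern using the Piola-type identities for $\ROT$ and $\DIV$ already recorded in the proof of Lemma~\ref{Lem:Kdelta_commutes}, with $\bvarphi_\delta$ and $\Jac_\delta$ replaced by $\bvartheta_\delta$ and $\polK_\delta$. Fix $f\in \tZ\upgp(\Dom)$. Lemma~\ref{Lem:tilde_commutes_with_diff_operators} gives $\tf\in W^{1,p}(\Real^d)$ with $\GRAD \tf = \widetilde{\GRAD f}$. My plan is to differentiate $\calK_{\delta,0}\upg f$ under the integral sign and apply the chain rule to $\bx \mapsto \tf(\bvartheta_\delta(\bx) + \delta\zeta\by)$ to obtain
\[
\GRAD(\calK_{\delta,0}\upg f)(\bx) = \polK_\delta\tr(\bx) \int_{B(\bzero,1)} \rho(\by)\, (\GRAD \tf)(\bvartheta_\delta(\bx) + \delta\zeta\by)\dif y,
\]
where $\polK_\delta\tr(\bx)$ is pulled out of the $\by$-integral since it does not depend on $\by$. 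Substituting $\GRAD \tf = \widetilde{\GRAD f}$ and placing $\polK_\delta\tr(\bx)$ back inside the integral, the right-hand side matches the definition of $\calK_{\delta,0}\upc \GRAD f$, which establishes (i).

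The main obstacle is justifying the chain rule and differentiation under the integral sign when $\tf$ is only $W^{1,p}(\Real^d)$-regular rather than $C^1$. I would handle this by density: approximate $\tf$ in $W^{1,p}(\Real^d)$ by a sequence of smooth functions $(h_n)_{n\in\polN} \subset C^\infty(\Real^d) \cap W^{1,p}(\Real^d)$ via standard Friedrichs mollification on $\Real^d$ (which makes sense because $\tf$ is defined on the whole space). For each smooth $h_n$ the chain-rule identity is a routine pointwise calculation, and passing to the limit on both sides is legitimate because the $\by$-integration against $\rho$ (and, after integration by parts, against $\GRAD\rho$) acts as a bounded operator from $L^p(\Real^d)$ to $L^p(\Dom)$, exactly as in the stability estimate of Theorem~\ref{Thm:Kdeltaf_to_f_in_LP}. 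An equivalent route is to transfer the derivative onto the kernel via integration by parts, as in the proof of Lemma~\ref{Lem:Kdelta_Cinfty}, and then restore it onto $\tf$ using that $\tf$ has a bona fide weak gradient.

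For (ii) and (iii), I would apply the Piola-type chain rule identities from the proof of Lemma~\ref{Lem:Kdelta_commutes} to $\btg$ in place of $\bg$, then invoke Lemma~\ref{Lem:tilde_commutes_with_diff_operators} to replace $\ROT \btg$ by $\widetilde{\ROT \bg}$ and $\DIV \btg$ by $\widetilde{\DIV \bg}$. The same density argument addresses the lack of smoothness of $\btg$. The commutative diagram in the statement is then a direct repackaging of (i)--(iii), combined with Lemma~\ref{Lem:Kdelta_Cinfty_zero} which places each of $\calK_{\delta,0}\upg$, $\calK_{\delta,0}\upc$, $\calK_{\delta,0}\upd$, $\calK_{\delta,0}\upb$ into the appropriate $C_0^\infty$ space.
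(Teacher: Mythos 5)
Your proof follows exactly the paper's route: apply the chain rule to $\bx\mapsto\tf(\bvartheta_\delta(\bx)+\delta\zeta\by)$, then use Lemma~\ref{Lem:tilde_commutes_with_diff_operators} to replace $\GRAD\tf$ by $\widetilde{\GRAD f}$ (and similarly for $\ROT$, $\DIV$), which is precisely what the paper does. Your extra discussion of the density justification for the chain rule and of Lemma~\ref{Lem:Kdelta_Cinfty_zero} for the $C_0^\infty$ ranges is correct detail that the paper leaves implicit.
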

\begin{proof}
  The proof is almost the same as that of
  Lemma~\ref{Lem:Kdelta_commutes_zero}.  For instance, using the chain
  rule together with
  Lemma~\ref{Lem:tilde_commutes_with_diff_operators}, we obtain
\begin{align*}
\GRAD \calK\upg_{\delta,0} f(\bx) &= \int_{B(\bzero,1)}\rho(\by)
\polK_\delta(\bx)\tr \GRAD \tf (\bvartheta(\bx) + \delta\zeta \by)\dif y  \\
&=\int_{B(\bzero,1)}\rho(\by)
\polK_\delta(\bx)\tr \widetilde{\GRAD f} (\bvartheta(\bx) + \delta\zeta \by)\dif y 
= \calK\upc_{\delta,0} \GRAD f(\bx).
\end{align*}
Note here that it is critical that $\GRAD \tf =\widetilde{\GRAD f}$.
The argument for the other two equalities is identical.
\end{proof}

\subsection{Convergence} Similarly to \S\ref{Sec:Convergence}, we can
now state convergence results. Owing to Lemma~\ref{Lem:enlarging_Lipschitz}, $\polK_\delta$ and
$\polK_\delta^{-1}$ converge uniformly to the identity and
$\det(\polK_\delta)$ converges uniformly to $1$. As a result,
there is $\tilde\delta_0\in (0,1]$ such that $\|\polK_\delta-\polI\|_{\ell^2}\le \frac12$,  $\|\polK_\delta^{-1}\|_{\ell^2}\le 2$, and $|{\det(\polK_\delta^{-1})}|\le 2^d$, for all $\delta\in [0,\tilde\delta_0]$ and all $\bx\in\Dom$.
We combine the
counterparts of Theorem~\ref{Thm:Kdeltaf_to_f_in_LP} and
Corollary~\ref{Cor:Kdeltaf_to_f_in_LP} into one statement.
\begin{theorem}[Convergence] \label{Thm:Kdeltaf_to_f_in_LP_zero} The
  sequence $(\calK_{\delta,0})_{\delta\in [0,\tilde\delta_0]}$ is
  uniformly bounded in $\calL(L^p;L^p)$ for all $p\in [1,\infty]$.
  Moreover, for all $p\in [1,\infty)$
\begin{equation}
\lim_{\delta\to 0}\|\calK_{\delta,0} f - f\|_{L^{p}(\Dom;\Real^q)} = 0, 
\quad \forall f\in  L^p(\Dom;\Real^q),\label{convergence_zero}
\end{equation}
and
\begin{subequations}\begin{align}
\lim_{\delta\to 0}\|\GRAD(\calK\upg_{\delta,0} f - f)\|_{\bL^{p}(\Dom)} = 0, 
& \quad \forall f\in  \tZ\upgp(\Dom),\label{Grad_convergence_zero}\\
\lim_{\delta\to 0}\|\ROT(\calK\upc_{\delta,0} \bg - \bg)\|_{\bL^{p}(\Dom)} = 0, 
& \quad \forall \bg\in  \btZ\upcp(\Dom),\label{Rot_convergence_zero}\\
\lim_{\delta\to 0}\|\DIV(\calK\upd_{\delta,0} \bg - \bg)\|_{L^{p}(\Dom)} = 0, 
& \quad \forall \bg\in \btZ\updp(\Dom). \label{Div_convergence_zero}
\end{align}\end{subequations}
\end{theorem}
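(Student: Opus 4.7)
The plan is to adapt the argument of Theorem~\ref{Thm:Kdeltaf_to_f_in_LP} and Corollary~\ref{Cor:Kdeltaf_to_f_in_LP} to the extension-by-zero setting, substituting $\bvartheta_\delta$, $\zeta$, $\polK_\delta$ and $\polB_\delta$ for $\bvarphi_\delta$, $\radius$, $\Jac_\delta$ and $\polA_\delta$. By the factorization \eqref{def_calK_delta_zero} together with the bound \eqref{eq:bound_polB}, it suffices to establish both the uniform $L^p$-bound and the $L^p$-convergence $\calK_{\delta,0}\upg f\to f$ for the scalar operator.

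First I would prove uniform boundedness in $\calL(L^p;L^p)$ via the Riesz--Thorin interpolation theorem. The case $p=\infty$ is immediate since $\int\rho=1$ and $\|\tf\|_{L^\infty(\Real^d)}=\|f\|_{L^\infty(\Dom)}$. For $p=1$, I would apply Fubini followed by the change of variables $\bz=\bvartheta_\delta(\bx)+(\delta\zeta)\by$ (with $\by$ fixed). Because $\|\polK_\delta-\polI\|_{\ell^2}\le\frac12$ on $[0,\tilde\delta_0]$, the Jacobian satisfies $|\det\polK_\delta^{-1}|\le 2^d$, and crucially the extension by zero yields $\int_{\Real^d}|\tf(\bz)|\dif z=\int_\Dom|f|\dif x$, from which the $L^1$ bound follows.

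Next I would establish \eqref{convergence_zero} by a density argument. For $f\in C_0^\infty(\Dom)$, the extension $\tf$ is smooth and Lipschitz on $\Real^d$ with Lipschitz constant $L_f$, so for every $\bx\in\Dom$,
\[
|\calK_{\delta,0}\upg f(\bx)-f(\bx)|\le \int_{B(\bzero,1)}\rho(\by)L_f\bigl\|\bvartheta_\delta(\bx)+(\delta\zeta)\by-\bx\bigr\|_{\ell^2}\dif y\le c\,L_f\,\delta,
\]
using Lemma~\ref{Lem:enlarging_Lipschitz}(ii) to bound $\|\bvartheta_\delta(\bx)-\bx\|_{\ell^2}\le c\delta$. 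Since $C_0^\infty(\Dom)$ is dense in $L^p(\Dom)$ for every $p\in[1,\infty)$, the triangle inequality combined with the uniform $L^p$-bound yields \eqref{convergence_zero}, exactly as in step~(3) of the proof of Theorem~\ref{Thm:Kdeltaf_to_f_in_LP}.

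Finally, to obtain the convergence of the derivatives I would invoke the commutation result of Lemma~\ref{Lem:Kdelta_commutes_zero}. For $f\in\tZ\upgp(\Dom)$, Lemma~\ref{Lem:tilde_commutes_with_diff_operators} gives $\GRAD\tf=\widetilde{\GRAD f}\in\bL^p(\Real^d)$, hence $\GRAD f\in\bL^p(\Dom)$; then $\GRAD\calK_{\delta,0}\upg f=\calK_{\delta,0}\upc\GRAD f\to \GRAD f$ in $\bL^p(\Dom)$ by \eqref{convergence_zero} applied to $\GRAD f$, proving \eqref{Grad_convergence_zero}. The same reduction handles \eqref{Rot_convergence_zero} and \eqref{Div_convergence_zero}. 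The main subtlety, compared to the proof of Theorem~\ref{Thm:Kdeltaf_to_f_in_LP}, is that the density step must use $C_0^\infty(\Dom)$ rather than arbitrary Lipschitz functions on $\Dom$: extension by zero of a generic Lipschitz function produces a jump across $\front$ that destroys the pointwise Lipschitz control on $\Real^d$ required for the smooth case.
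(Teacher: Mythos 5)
Your proof is correct and follows essentially the same route as the paper, which proves this theorem by adapting the arguments of Theorem~\ref{Thm:Kdeltaf_to_f_in_LP} and Corollary~\ref{Cor:Kdeltaf_to_f_in_LP} (Riesz--Thorin for the uniform bound, density plus pointwise estimate on the dense class for \eqref{convergence_zero}, and commutation via Lemma~\ref{Lem:Kdelta_commutes_zero} for the derivatives). You correctly identify, and the paper leaves implicit, that the density step must use $C_0^\infty(\Dom)$ rather than arbitrary Lipschitz functions on $\Dom$, since $\bvartheta_\delta$ pushes points outside $\Dom$ and the extension by zero of a generic Lipschitz function fails to be Lipschitz on $\Real^d$.
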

\begin{proof}
  The proof of \eqref{convergence_zero} is the same as that of
  Theorem~\ref{Thm:Kdeltaf_to_f_in_LP}.  See the proof of
  Corollary~\ref{Cor:Kdeltaf_to_f_in_LP} for the other three statements.
\end{proof}

Let $s\in (0,1]$, $p\in [1,\infty)$ or $s=1$, $p\in [1,\infty]$. Let us denote by $\tW^{s,p}(\Dom;\Real^q)$, 
the space composed of the functions in $W^{s,p}(\Dom;\Real^q)$ whose extension by zero is in 
$W^{s,p}(\Real^d;\Real^q)$. We set $|f|_{\tW^{s,p}(\Dom;\Real^q)} := |\tf|_{W^{s,p}(\Real^d;\Real^q)}$.
\begin{theorem}[Convergence rate] \label{Th:approx_Kdelta_W1p_zero}
 There is $c$ such that
\begin{equation}
  \|\calK_{\delta,0} f -f\|_{L^p(\Dom;\Real^q)}\le c\,\delta^s 
  |f|_{\tW^{s,p}(\Dom;\Real^q)},
\end{equation} 
for all $f\in \tW^{s,p}(\Dom;\Real^q)$, all $\delta\in
[0,\tilde\delta_0]$, and all $s\in (0,1]$, $p\in [1,\infty)$ or $s=1$, $p\in [1,\infty]$.
\end{theorem}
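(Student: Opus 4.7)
The plan is to mimic the proof of Theorem~\ref{Th:approx_Kdelta_W1p} almost verbatim, performing the systematic substitutions $\bvarphi_\delta\leftrightarrow\bvartheta_\delta$, $\radius\leftrightarrow\zeta$, $\polA_\delta\leftrightarrow\polB_\delta$, and most importantly $f\leftrightarrow\tf$. The hypothesis $f\in\tW^{s,p}(\Dom;\Real^q)$, which by definition says that $\tf\in W^{s,p}(\Real^d;\Real^q)$, is precisely what lets these substitutions close the argument. As before, by~\eqref{eq:bound_polB} combined with the Leibniz product rule, it suffices to establish the bound $\|\calK_{\delta,0}\upg f - f\|_{L^p(\Dom)} \le c\,\delta^s|\tf|_{W^{s,p}(\Real^d)}$ for the scalar operator.

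For the range $s\in(0,1)$, $p\in[1,\infty)$, I would start from
\[
\calK_{\delta,0}\upg f(\bx) - f(\bx) = \int_{B(\bzero,1)}\rho(\by)\bigl(\tf(\bvartheta_\delta(\bx)+(\delta\zeta)\by)-\tf(\bx)\bigr)\dif y,
\]
valid for $\bx\in\Dom$ since $\tf(\bx)=f(\bx)$ there. Raising to the $p$-th power via Jensen, changing variables $\by\mapsto\bz=\bvartheta_\delta(\bx)+(\delta\zeta)\by$ (with Jacobian of order $(\delta\zeta)^{-d}$ and $\|\bz-\bx\|_{\ell^2}\le c\delta$ by Lemma~\ref{Lem:enlarging_Lipschitz}(ii)), inserting the weight $\|\bz-\bx\|_{\ell^2}^{-(sp+d)}\|\bz-\bx\|_{\ell^2}^{sp+d}$, and applying Fubini, exactly as in step~(1) of the proof of Theorem~\ref{Th:approx_Kdelta_W1p}, leads to a Gagliardo double integral. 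The crucial observation is that, since $\tf$ is defined on all of $\Real^d$, this double integral can be freely extended from $\Dom\times\Dom$ to $\Real^d\times\Real^d$ and identified with $|\tf|_{W^{s,p}(\Real^d)}^p$, giving the desired bound $c\,\delta^{sp}|f|_{\tW^{s,p}(\Dom)}^p$.

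For $s=1$, $p\in[1,\infty)$, the strategy mirrors step~(2) in the proof of Theorem~\ref{Th:approx_Kdelta_W1p}: first bound the $L^p$-error by $\int_{B(\bzero,1)}\!\int_\Dom|\tf(\bpsi_\delta(\bx))-\tf(\bx)|^p\dif x\dif y$ with $\bpsi_\delta(\bx):=\bvartheta_\delta(\bx)+(\delta\zeta)\by$, then fix $\by$ and invoke a variant of Lemma~\ref{Lem:fpsilambda_minus_f_in_w1p} adapted to $\tf$ on $\Real^d$. Writing $\tf(\bpsi_\delta(\bx))-\tf(\bx)=\int_0^1 D\tf(\bmu_{\delta,t}(\bx))(\bpsi_\delta(\bx)-\bx)\dif t$ with $\bmu_{\delta,t}(\bx):=\bx+t(\bpsi_\delta(\bx)-\bx)$, raising to the $p$-th power, and changing variables $\bz=\bmu_{\delta,t}(\bx)\in\Real^d$ in the outer integral, using $\|D\bmu_{\delta,t}^{-1}\|_{\ell^2}\le 2$ and $|\det D\bmu_{\delta,t}^{-1}|\le 2^d$, yields the bound $c\,\delta^p\|\GRAD\tf\|_{\bL^p(\Real^d)}^p$. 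The case $s=1$, $p=\infty$ is a direct pointwise variant along the same segment.

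The main obstacle is precisely this final step: unlike Lemma~\ref{Lem:shrinking_Lipschitz}(iv), which guaranteed that the segment $\bmu_{\delta,t}(\bx)$ remains inside $\Dom$, the expansion $\bvartheta_\delta$ pushes points outward and the segment can exit $\Dom$. This is exactly why one must work with the extension $\tf$ on $\Real^d$ rather than $f$ on $\Dom$, and why the natural regularity hypothesis here is $f\in\tW^{s,p}(\Dom)$ rather than the weaker $f\in W^{s,p}(\Dom)$.
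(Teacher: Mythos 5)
Your proof is correct and matches the paper's intent exactly: the paper dismisses the argument as ``identical to that of Theorem~\ref{Th:approx_Kdelta_W1p},'' and your systematic substitutions ($\bvarphi_\delta\to\bvartheta_\delta$, $\radius\to\zeta$, $\polA_\delta\to\polB_\delta$, $f\to\tf$) together with running the Gagliardo double integral and the $W^{1,p}$ segment estimate over $\Real^d$ rather than $\Dom$ are precisely what that abbreviation requires. Your closing observation is the key point the paper leaves tacit: because $\bvartheta_\delta$ pushes outward, the segment $\bmu_{\delta,t}(\bx)$ (and the target point $\bz$) can leave $\Dom$, which is exactly why the argument must use the zero extension $\tf\in W^{s,p}(\Real^d)$ and why the natural hypothesis is $f\in\tW^{s,p}(\Dom)$ rather than $f\in W^{s,p}(\Dom)$.
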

\begin{proof}
The proof is identical to that of Theorem~\ref{Th:approx_Kdelta_W1p}.
\end{proof}

To state a convergence result using norms on $\Dom$, we recall (see \eg \citet[Thm.~1.4.2.4, Cor.~1.4.4.5]{Gr85}) that
\begin{subequations}\begin{alignat}{2}
\tW^{s,p}(\Dom;\Real^q) &= W_0^{s,p}(\Dom;\Real^q)&\qquad&  \text{if $sp\ne 1$},\\
\tW^{s,p}(\Dom;\Real^q) &= W^{s,p}(\Dom;\Real^q)&\qquad&  \text{if $sp\in [0,1)$}. \label{norm_equiv_sp_lt_one}
\end{alignat}\end{subequations}
(Recall also that the constants in the above norm equivalences depend on $|sp-1|$.)

\begin{corollary}[Convergence rate on derivatives] Let $p\in [1,\infty)$ and $s\in(0,\frac1p)$. Then,
\label{Cor:Kdeltaf_to_f_in_LP_zero} there is $c$ (depending on $|sp-1|$) such that 
\begin{alignat*}{2}
\|\GRAD(\calK\upg_{\delta,0} f - f)\|_{\bL^{p}(\Dom)} &\le c \, \delta^s \|\GRAD f\|_{W^{s,p}(\Dom)}, 
& \quad &\forall f\in  \bset v \in L^p(\Dom)\st \GRAD v\in \bW^{s,p}(\Dom)\eset,\\
\|\ROT(\calK\upc_{\delta,0} \bg - \bg)\|_{\bL^{p}(\Dom)} &\le c\, \delta^s \|\ROT \bg\|_{W^{s,p}(\Dom)},
& \quad &\forall \bg\in  \bset \bv\in \bL^p(\Dom) \st \ROT \bv \in  \bW^{s,p}(\Dom)\eset,\\
\|\DIV(\calK\upd_{\delta,0} \bg - \bg)\|_{L^{p}(\Dom)} &\le c\, \delta^s \|\DIV \bg\|_{W^{s,p}(\Dom)},
& \quad &\forall \bg\in \bset \bv \in \bL^p(\Dom)\st \DIV\bv \in W^{s,p}(\Dom)\eset.
\end{alignat*}
\end{corollary}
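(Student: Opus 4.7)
The plan is to combine the commuting diagram of Lemma~\ref{Lem:Kdelta_commutes_zero} with the convergence rate of Theorem~\ref{Th:approx_Kdelta_W1p_zero}, and then close the loop with the norm equivalence \eqref{norm_equiv_sp_lt_one} valid in the regime $sp<1$.

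More precisely, for the first estimate I would start from the commutation identity
\[
\GRAD \calK_{\delta,0}\upg f \;=\; \calK_{\delta,0}\upc \GRAD f,
\]
so that
\[
\|\GRAD(\calK_{\delta,0}\upg f - f)\|_{\bL^p(\Dom)}
\;=\; \|\calK_{\delta,0}\upc (\GRAD f) - \GRAD f\|_{\bL^p(\Dom)}.
\]
Applying Theorem~\ref{Th:approx_Kdelta_W1p_zero} with $q=d$ to the field $\GRAD f$ (in the regime $s\in(0,1)$, $p\in[1,\infty)$) then yields
\[
\|\calK_{\delta,0}\upc (\GRAD f) - \GRAD f\|_{\bL^p(\Dom)}
\;\le\; c\,\delta^{s}\,|\GRAD f|_{\btW^{s,p}(\Dom)}.
\]
This is the same bound as in Corollary~\ref{Cor:Kdeltaf_to_f_in_LP_rate} but measured in the \emph{extension} seminorm, which is the natural quantity produced by the zero-extension-based mollifier.

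To finish, I use the hypothesis $sp<1$: by \eqref{norm_equiv_sp_lt_one} the spaces $\tW^{s,p}(\Dom;\Real^d)$ and $W^{s,p}(\Dom;\Real^d)$ coincide, with an equivalence constant depending only on $|sp-1|$. Hence $|\GRAD f|_{\btW^{s,p}(\Dom)} \le c'\,\|\GRAD f\|_{\bW^{s,p}(\Dom)}$ with $c'=c'(|sp-1|)$, and combining with the previous estimate gives the first inequality in the corollary. The two remaining inequalities follow by the same scheme: apply Lemma~\ref{Lem:Kdelta_commutes_zero}\,(ii) (resp.~(iii)) to reduce the derivative error on the left to a plain $\calK_{\delta,0}\upd$ (resp.~$\calK_{\delta,0}\upb$) approximation error of $\ROT\bg$ (resp.~$\DIV\bg$), invoke Theorem~\ref{Th:approx_Kdelta_W1p_zero} in $\bL^p$ (resp.~$L^p$), and conclude via the same norm equivalence $\tW^{s,p}=W^{s,p}$.

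The only delicate point, which is really the heart of the argument, is that Theorem~\ref{Th:approx_Kdelta_W1p_zero} involves the seminorm $|\SCAL|_{\tW^{s,p}}$ defined through the zero-extension, whereas the corollary is stated in the intrinsic seminorm $\|\SCAL\|_{W^{s,p}(\Dom)}$. The restriction $s<1/p$ is precisely what makes this passage free (it guarantees $\tf\in W^{s,p}(\Real^d)$ for every $f\in W^{s,p}(\Dom)$, with norm control up to a constant blowing up as $sp\to 1^-$); this explains the dependence of the constant $c$ on $|sp-1|$ in the statement. No new estimate is needed beyond what is already collected in Lemma~\ref{Lem:Kdelta_commutes_zero}, Theorem~\ref{Th:approx_Kdelta_W1p_zero}, and \eqref{norm_equiv_sp_lt_one}.
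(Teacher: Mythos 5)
Your proposal is correct and follows exactly the paper's own argument: commute via Lemma~\ref{Lem:Kdelta_commutes_zero}, invoke Theorem~\ref{Th:approx_Kdelta_W1p_zero} to get the rate in the $|\SCAL|_{\tW^{s,p}}$ seminorm, and then use the norm equivalence \eqref{norm_equiv_sp_lt_one} valid for $sp<1$ to pass to the intrinsic $\|\SCAL\|_{W^{s,p}(\Dom)}$ norm, explaining the dependence of the constant on $|sp-1|$. One small wording slip: the extension that must lie in $W^{s,p}(\Real^d)$ is that of the \emph{derivative} ($\widetilde{\GRAD f}$, $\widetilde{\ROT\bg}$, $\widetilde{\DIV\bg}$), not of $f$ itself; you write the argument correctly in the displayed estimates, only the parenthetical prose says ``$\tf$'' where it should refer to the derivative.
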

\begin{proof} The proof relies on the commuting properties from Lemma~\ref{Lem:Kdelta_commutes_zero},
  Theorem~\ref{Th:approx_Kdelta_W1p_zero}, and \eqref{norm_equiv_sp_lt_one}.
For instance,
\begin{align*}
\|\GRAD (\calK_{\delta,0}\upg f -f)\|_{\bL^p(\Dom)} &=
\|\calK_{\delta,0}\upc \GRAD f -\GRAD f\|_{\bL^p(\Dom)} 
&& \text{since $\GRAD\calK_{\delta,0}\upg = \calK_{\delta,0}\upc\GRAD$} \\
& \le c\, \delta^s |\GRAD f|_{\btW^{s,p}(\Dom)} &&\text{owing to Theorem~\ref{Th:approx_Kdelta_W1p_zero}}\\
& \le c'_{s,p} \delta^s \|\GRAD f\|_{\bW^{s,p}(\Dom)} && \text{owing to \eqref{norm_equiv_sp_lt_one}},
\end{align*}
where $c'_{s,p}$ depends on $|sp-1|$.
Proceed similarly for the two other estimates.
\end{proof}

\begin{remark}
  The construction of $\calK\upc_{\delta,0}$ is similar in spirit to what has
  been proposed in \citet{Bonito_guermond_luddens_2015}.  The curl
  estimates in Theorem~\ref{Th:approx_Kdelta_W1p_zero} and Corollary
  \ref{Cor:Kdeltaf_to_f_in_LP_zero} are identical to those in
  \cite[Thm.~3.1]{Bonito_guermond_luddens_2015}.
\end{remark}

\begin{remark}[$sp>1$]
  Convergence rates on derivatives can also be derived for
  $sp>1$, namely
  $\|\GRAD(\calK\upg_{\delta,0} f - f)\|_{\bL^{p}(\Dom)} \le c \,
  \delta^s |\GRAD f|_{\bW^{s,p}(\Dom)}$
  for all $f\in L^p(\Dom)$ with $\GRAD f\in \bW^{s,p}_0(\Dom)$,
  $\|\ROT(\calK\upc_{\delta,0} \bg - \bg)\|_{\bL^{p}(\Dom)} \le c\,
  \delta^s |\ROT \bg|_{\bW^{s,p}(\Dom)}$
  for all $\bg\in \bL^p(\Dom)$ with $\ROT \bg \in \bW^{s,p}_0(\Dom)$,
  and
  $\|\DIV(\calK\upd_{\delta,0} \bg - \bg)\|_{L^{p}(\Dom)} \le c\,
  \delta^s |\DIV \bg|_{W^{s,p}(\Dom)}$
  for all $\bg\in \bL^p(\Dom)$ with $\DIV\bg \in W^{s,p}_0(\Dom)$,
  where $c$ depends on $|sp-1|$.
  Note that these estimates require boundary conditions on the
  derivatives.
\end{remark}

\subsection{Traces of vector fields}
In this section, we illustrate the use of the mollifying operator
$\calK_{\delta,0}$.  Let $p\in (1,\infty)$. Recall the spaces
$\bZ\upcp(\Dom)$ and $\bZ\updp(\Dom)$
from~\eqref{eq:def_Wupcp}-\eqref{eq:def_Wupdp}.  Since the trace
operator
$\gamma_0: W^{1,p'}(\Dom) \longrightarrow W^{\frac{1}{p},p'}(\front)$
is surjective (see \citet[p.~315]{Brezis:11},
\citet[Thm.~1.5.1.2\&1.5.1.6]{Gr85}, \citet[Thm.~3.38]{MacLean_2000}
(for $s\in(\frac12,\frac32), p=2$)), letting
$\langle\SCAL,\SCAL\rangle_\front$ denote the duality pairing between
$\bW^{-\frac{1}{p},p}(\front)$ and $\bW^{\frac{1}{p},p'}(\front)$, we
define the bounded linear map
$\gamma_{\CROSS\bn}:\bZ\upcp(\Dom) \rightarrow
\bW^{-\frac{1}{p},p}(\front)$ by
\begin{align}
\langle \gamma_{\CROSS\bn}(\bv), \bl\rangle_\front
  := \int_\Dom \bv\SCAL \ROT \bw(\bl)\dif x - \int_\Dom \bw(\bl)\SCAL \ROT\bv\dif x 
\label{eq:int_by_parts_curl_curl},
\end{align}
for all $\bv\in \bZ\upcp(\Dom)$ and all
$\bl\in \bW^{\frac1p,p'}(\front)$, where
$\bw(\bl)\in \bW^{1,p'}(\Dom)$ is such that
$\gamma_0(\bw(\bl))=\bl$. Note that $\gamma_{\CROSS\bn}(\bv)=\bv_{|\front}\CROSS\bn$ when $\bv$ is smooth. The definition~\eqref{eq:int_by_parts_curl_curl} 
is independent of the choice
of $\bw(\bl)$. Indeed, let $\bw_1,\bw_2\in \bW^{1,p'}(\Dom)$ be such
that $\gamma_0(\bw_1)=\gamma_0(\bw_2)=\bl$, \ie
$\bw_1-\bw_2\in \bW^{1,p'}_0(\Dom)$.  Let $(\bphi_n)_{n\in \Natural}$
be a sequence in $\bC_0^\infty(\Dom)$ converging to $\bw_1-\bw_2$ in
$\bW^{1,p'}_0(\Dom)$. Then,
$0=\int_\Dom \bv\SCAL \ROT\bphi_n\dif x-\int_\Dom \bphi_n\SCAL \ROT
\bv\dif x $, as can be seen by replacing $\bv$ by $\calK_\delta\upc\bv$ and passing to the limit $\delta\to0$.
Passing to the limit $n\to\infty$ yields
$0=\int_\Dom \bv\SCAL \ROT(\bw_1-\bw_2)\dif x -\int_\Dom
(\bw_1-\bw_2)\SCAL \ROT\bv\dif x $;
hence,
$\langle \gamma_{\CROSS\bn}(\bv), \gamma_0(\bw_1)\rangle_\front
=\langle \gamma_{\CROSS\bn}(\bv), \gamma_0(\bw_2)\rangle_\front$,
which establishes the claim. 

We also define $\gamma_{\SCAL\bn}:\bZ\updp(\Dom) \rightarrow
W^{-\frac{1}{p},p}(\front)$ by
\begin{align}
\langle \gamma_{\SCAL\bn}(\bv), l\rangle_\front
  := \int_\Dom \bv\SCAL \GRAD q(l)\dif x + \int_\Dom q(l) \DIV \bv\dif x, 
\label{eq:int_by_parts_div_grad} 
\end{align}
for all $\bv\in \bZ\updp(\Dom)$ and all $l\in W^{\frac1p,p'}(\front)$, where
$q(l)\in W^{1,p'}(\Dom)$ is such that $\gamma_0(q(l))=l$, and
$\langle\SCAL,\SCAL\rangle_\front$ now denotes the duality pairing between
$W^{-\frac{1}{p},p}(\front)$ and $W^{\frac{1}{p},p'}(\front)$.  Reasoning as above, one can verify that this
definition is independent of the choice of $q(l)$. Note also that
$\gamma_{\SCAL\bn}(\bv) = \bv_{|\front}\SCAL \bn$ when $\bv$ is smooth.
%

Let us now introduce
\begin{subequations}\begin{align}
\bZ\upcp_0(\Dom)&:= \overline{\bC^\infty_0(\Dom)}^{\bZ\upcp(\Dom)},\\
\bZ\updp_0(\Dom)&:= \overline{\bC^\infty_0(\Dom)}^{\bZ\updp(\Dom)}.
\end{align}\end{subequations}
\begin{theorem}[Kernels of $\gamma_{\CROSS\bn}$ and
  $\gamma_{\SCAL\bn}$] \label{Thm:traces_div_curl} Let
  $p\in (1,\infty)$. Then,
\begin{subequations}
\begin{align}
\bZ\upcp_0(\Dom)&= \ker(\gamma_{\CROSS\bn}), \\
\bZ\updp_0(\Dom)&= \ker(\gamma_{\SCAL\bn}).
\end{align}\end{subequations}\vspace{-\baselineskip}
\end{theorem}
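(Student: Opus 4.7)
The plan is to prove both kernel characterizations by the same two-step strategy, and it suffices to present the curl case; the divergence case is handled in identical fashion after swapping $\gamma_{\CROSS\bn}$ with $\gamma_{\SCAL\bn}$, $\ROT$ with $\DIV$, and $\calK_{\delta,0}\upc$ with $\calK_{\delta,0}\upd$.

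The forward inclusion $\bZ\upcp_0(\Dom)\subset \ker(\gamma_{\CROSS\bn})$ is the easy direction. The map $\gamma_{\CROSS\bn}:\bZ\upcp(\Dom)\to \bW^{-\frac1p,p}(\front)$ is bounded by construction (since $\bw(\bl)$ can be chosen with $\bW^{1,p'}(\Dom)$-norm controlled by $\|\bl\|_{\bW^{\frac1p,p'}(\front)}$), and $\gamma_{\CROSS\bn}$ vanishes on $\bC_0^\infty(\Dom)$ by integration by parts (no boundary term since test functions are compactly supported). Closedness of $\ker(\gamma_{\CROSS\bn})$ then gives the inclusion.

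For the reverse inclusion, the key step is to identify the distributional curl of the extension by zero $\btv$ on $\Real^d$ when $\gamma_{\CROSS\bn}(\bv)=0$. For any $\bphi\in \bC_0^\infty(\Real^d)$, the restriction $\bphi_{|\Dom}\in \bW^{1,p'}(\Dom)$ is a valid lifting of the trace $\bl:=\gamma_0(\bphi_{|\Dom})$, so \eqref{eq:int_by_parts_curl_curl} gives
\begin{equation*}
\langle \ROT\btv,\bphi\rangle_{\calD',\calD}
= \int_\Dom \bv\SCAL\ROT\bphi\,\dif x
= \langle \gamma_{\CROSS\bn}(\bv),\bl\rangle_\front + \int_\Dom \bphi\SCAL \ROT\bv\,\dif x
= \int_{\Real^d} \bphi\SCAL \widetilde{\ROT\bv}\,\dif x,
\end{equation*}
so that $\ROT\btv=\widetilde{\ROT\bv}\in \bL^p(\Real^d)$. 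Hence $\bv\in \btZ\upcp(\Dom)$, which is precisely the membership needed to activate the convergence result.

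With $\bv\in \btZ\upcp(\Dom)$ established, Lemma~\ref{Lem:Kdelta_Cinfty_zero} guarantees that $\calK_{\delta,0}\upc\bv\in \bC_0^\infty(\Dom)$ for every $\delta\in(0,\tilde\delta_0]$, Theorem~\ref{Thm:Kdeltaf_to_f_in_LP_zero} yields $\calK_{\delta,0}\upc \bv\to \bv$ in $\bL^p(\Dom)$, and \eqref{Rot_convergence_zero} (combined with Lemma~\ref{Lem:Kdelta_commutes_zero}) yields $\ROT\calK_{\delta,0}\upc\bv\to \ROT\bv$ in $\bL^p(\Dom)$. Thus $\calK_{\delta,0}\upc\bv\to \bv$ in $\bZ\upcp(\Dom)$, which places $\bv\in \bZ\upcp_0(\Dom)$. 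The main obstacle is the middle step: bridging the abstract duality vanishing $\gamma_{\CROSS\bn}(\bv)=0$ to the concrete statement $\ROT\btv=\widetilde{\ROT\bv}$ in $\Real^d$; everything else is either a direct application of a previously proven result or a symmetric repetition for the divergence using~\eqref{eq:int_by_parts_div_grad} and $\calK_{\delta,0}\upd$.
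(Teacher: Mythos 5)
Your proposal is correct and follows essentially the same route as the paper's proof: the forward inclusion uses integration by parts to show $\gamma_{\CROSS\bn}$ vanishes on $\bC_0^\infty(\Dom)$ and then passes to the limit (you phrase this via boundedness of $\gamma_{\CROSS\bn}$ and closedness of its kernel, while the paper works out the density argument against test functions $\bw$ explicitly, but it is the same idea), and the reverse inclusion is identical, hinging on the observation that $\gamma_{\CROSS\bn}(\bv)=0$ forces $\ROT\btv=\widetilde{\ROT\bv}$ so that $\bv\in\btZ\upcp(\Dom)$, after which Lemma~\ref{Lem:Kdelta_Cinfty_zero} and Theorem~\ref{Thm:Kdeltaf_to_f_in_LP_zero} finish the job.
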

\begin{proof}
  Let us do the proof for $\gamma_{\CROSS\bn}$, the proof for
  $\gamma_{\SCAL\bn}$ is similar. 

  (1) We first show that
  $\bZ\upcp_0(\Dom)\subset \ker(\gamma_{\CROSS\bn})$, which is the
  easiest to establish.  By definition there is a sequence of smooth
  functions $(\bv_n)_{n\in\Natural}$ in $\bC^\infty_0(\Dom)$ converging to $\bv$ in
  $\bZ\upcp(\Dom)$. Let $\bw$ be a function in $\bC^\infty(\Dom)\cap
  \bC^0(\overline D)$,
  then
\[
0=\int_\Dom \DIV (\bw\CROSS \bv_n)\dif x 
= \int_\Dom \bv_n\SCAL \ROT\bw\dif x - \int_\Dom\bw\SCAL\ROT \bv_n\dif x.
\]
Both integrals on the right-hand side converge; hence,
\[
\langle\gamma_{\CROSS\bn}(\bv),\gamma_0(\bw) \rangle_\front 
= \int_\Dom\bv\SCAL \ROT\bw\dif x - \int_\Dom\bw\SCAL\ROT\bv\dif x = 0,
\]
for every function $\bw$ in $\bC^\infty(\Dom)\cap \bC^0(\overline D)$.
This also implies that the equality holds for all
$\bw\in W^{1,p'}(\Dom)$, since
$\bC^\infty(\Dom)\cap\bC^0(\overline D)$ is dense in
$\bw\in W^{1,p'}(\Dom)$, see Lemma~\ref{Lem:Kdelta_Cinfty} and
Theorem~\ref{Thm:Kdeltaf_to_f_in_LP}.  In conclusion,
$\bv\in \KER(\gamma_{\CROSS\bn})$ since $\gamma_0$ is surjective.

(2) Let us now establish the converse, \ie $\ker(\gamma_{\CROSS\bn})\subset \bZ\upcp_0(\Dom)$.
Let $\bv\in \ker(\gamma_{\CROSS\bn})$. Since $\bv\in \bZ\upcp(\Dom)\subset\bL^1(\Dom)$,
$\btv$ is differentiable in the distribution sense. 
Let $\bphi\in \bC_0^\infty(\Real^d)$, then
\begin{align*}
\langle \ROT\btv,\bphi\rangle=\int_{\Real^d} \btv\SCAL \ROT \bphi\dif x 
= \int_{\Dom} \bv\SCAL \ROT \bphi\dif x. 
\end{align*}
Using that $\bv\in \ker(\gamma_{\CROSS\bn})$, the above equality implies that 
\[
\langle \ROT\btv,\bphi\rangle = \int_{\Dom} \bv\SCAL \ROT \bphi\dif x
=\int_{\Dom} \bphi\SCAL\ROT \bv \dif x = \int_{\Real^d}
\bphi\SCAL\widetilde{\ROT\bv} \dif x.
\]
This proves that $\ROT\btv = \widetilde{\ROT\bv}\in \bL^1(\Real^d)$.
Hence $\bv\in \btZ\upcp(\Dom)$. We can now apply
\eqref{Rot_convergence_zero} from
Theorem~\ref{Thm:Kdeltaf_to_f_in_LP_zero} since
$\bv\in \btZ\upcp(\Dom)$, \ie the sequence
$(\calK_{\delta,0}\upc \bv)_{\delta\in [0,\tilde\delta_0]}$ converges to $\bv$ in
$\bZ\upcp(\Dom)$. This proves that
$\ker(\gamma_{\CROSS\bn})\subset \bZ\upcp_0(\Dom)$ since $\calK_{\delta,0}\upc\bv\in \bC^\infty_0(\Dom)$ (see Lemma~\ref{Lem:Kdelta_Cinfty_zero}).
\end{proof}

\begin{remark}[$Z_0=\tZ$]
The proof of Theorem~\ref{Thm:traces_div_curl} shows that $\bZ\upcp_0(\Dom) = \btZ\upcp(\Dom)$ and $\bZ\updp_0(\Dom) = \btZ\updp(\Dom)$; similarly, $Z\upgp_0(\Dom) = \tZ\upgp(\Dom)$.
\end{remark}

\section{Finite element setting}
\label{sec:FE}
We introduce in this section the finite element setting that we are
going to use in the rest of the paper.  We henceforth assume that
$\Dom$ is a bounded polyhedron in $\Real^d$.

\subsection{Meshes} 
Let $\famTh$ be a shape-regular sequence of affine meshes.  To avoid
technical questions regarding hanging nodes, we also assume that the
meshes cover $\Dom$ exactly and that they are matching, \ie for all
cells $K,K'\in\calT_h$ such that $K\ne K'$ and $K\cap K'\ne\emptyset$,
the set $K\cap K'$ is a common vertex, edge, or face of both $K$ and
$K'$ (with obvious extensions in higher space dimensions).  Given a
mesh $\calT_h$, the elements in $K\in \calT_h$ are closed sets in
$\Real^d$ by convention. The following sets
\begin{subequations}\begin{align}
  \calT_K &:= \bset K'\in \calT_h\st K'\cap K\ne \emptyset\eset,\\
\Dom_K &:= \interior \{\bx \in
\overline\Dom\tq \exists K'\in \calT_K,\, \bx \in K'\},
\end{align}\end{subequations}
for all $K\in\calT_h$, 
will be invoked in the following sections.
The set $\calT_K$ is the union of all the cells that touch $K$, and $\Dom_K$
is the interior of the collection of the points composing the cells in $\calT_K$.

We assume that there is a reference element $\wK$ such that for any
mesh $\calT_h$ and any cell $K\in \calT_h$, there is a bijective
affine mapping $\trans_K :\wK \longrightarrow K$ and an invertible
matrix $\Jac_K\in\Real^{d\CROSS d}$ (not to be confused with $\Jac_\delta$) such that
\begin{equation}
\trans_K(\wbx)-\trans_K(\wby) = \Jac_K (\wbx-\wby), \qquad \forall \wbx,\wby\in \wK.
\label{Eq2:TransAff}
\end{equation}
The shape-regularity assumption of the mesh sequence implies that
there are uniform constants $c^\sharp$, $c^\flat$ such
that
\begin{equation}
| {\det(\Jac_K)} | = \mes{K}\mes{\wK}^{-1}, \qquad 
\| \Jac_K \|_{\ell^2} \leq c^\sharp h_K, \qquad 
\| \Jac_K^{-1}  \|_{\ell^2} \leq c^\flat h_K^{-1},
\label{Eq2:propJK}
\end{equation}
where $h_K$ is the diameter of $K$. It can be shown that 
$c^\sharp=\frac{1}{\rho_{\wK}}$ and
$c^\flat = \frac{h_K}{\rho_K}h_{\wK}$ for meshes composed of
simplices, where $\rho_K$ is the diameter of the largest ball that can
be inscribed in $K$, $h_\wK$ is the diameter of $\wK$, and $\rho_\wK$
is the diameter of the largest ball that can be inscribed in $\wK$.

\subsection{Definition of $\delta(\bx)$}
In the arguments to follow, we are going to invoke smoothing operators
like those defined in \S\ref{Sec:Mollification}. To avoid having to
assume that the mesh sequence is quasi-uniform, we construct a
meshsize function $\frh\in C^{0,1}(\overline{\Dom};\Real)$ such that there are three uniform constants
$c,c',c''>0$ so that 
\begin{align} 
\|\frh\|_{W^{1,\infty}(\Dom;\Real)}\le c,\qquad  c' h_K \le \frh(\bx) \le c'' h_K,\quad  \forall \bx \in K,
\end{align}
for all $K\in\calT_h$. The construction of this function is standard
in the finite element literature.  For instance, if the mesh is
composed of simplices, consider the piecewise linear function whose
value at any vertex of the mesh is the average of the mesh-sizes of
the simplices sharing this vertex.

Following \citet{Christiansen_Winther_mathcomp_2008}, we introduce $\epsilon \in (0,1)$ and define
\begin{equation} \label{eq:delta_eps_frh}
\delta(\bx) := \epsilon \frh(\bx), \qquad \forall \bx \in \Dom.
\end{equation}
Then we can define $\bvarphi_\delta$ and $\bvartheta_\delta$ like in
\eqref{map_varphi_delta_Lipschitz} and
\eqref{map_vartheta_delta_Lipschitz}, and we can also define generic
mollifying operators $\calK_\delta$ and $\calK_{\delta,0}$ like in
\eqref{def_calK_delta} and \eqref{def_calK_delta_zero}.
Lemmas~\ref{Lem:shrinking_Lipschitz}\&\ref{Lem:enlarging_Lipschitz}
hold for $\ell\in\{0,1\}$ only, and the smoothness statement in
Lemmas~\ref{Lem:Kdelta_Cinfty}\&\ref{Lem:Kdelta_Cinfty_zero} must be
replaced by $\calK_{\delta} (g) \in C^1(\Dom;\Real^q)$ and
$\calK_{\delta,0} (g) \in C^1_0(\Dom;\Real^q)$ for all
$g\in L^1(\Dom;\Real^q)$, respectively, since $\delta$ is only
Lipschitz.  All the other statements in \S\ref{Sec:Mollification} and
\S\ref{Sec:Mollification_zero} remain unchanged.

\subsection{Reference and local finite elements}
We are going to consider various approximation spaces based on the
mesh sequence $\famTh$ and a fixed reference finite element $\wKPS$.
We henceforth assume that $\wP$ is composed of $\Real^q$-valued functions for some
integer $q\ge1$ and that $\wP\subset W^{1,\infty}(\wK;\Real^q)$
(recall that $\wP$ is a space of polynomial functions
in general).  The reference degrees of freedom and the associated
reference shape functions are denoted
$\{\wsigma_{1},\ldots, \wsigma_{\nf}\}$ and
$\{\wtheta_{1},\ldots, \wtheta_{\nf}\}$, respectively.  We denote
$\calN:=\intset{1}{\nf}$ to alleviate the notation.
We assume that the linear forms
$\{\wsigma_{i}\}_{i\in\calN}$ can be extended to
$\calL(V(\wK);\Real)$, where $V(\wK)$ is a Banach space such that
$V(\wK) \subset L^1(\wK;\Real^q)$; see~\citep[p.~39]{ErnGuermond_FEM}.
The interpolation operator $\inter_{\wK}:V(\wK)\to \wP$ associated
with the reference finite element $\wKPS$ is defined by
\begin{equation}
\inter_{\wK}(\wv)(\wbx) = 
\sum_{i\in \calN} \wsigma_i(v) \wtheta_i(\wbx), \qquad \forall \wbx\in
\wK,
\quad \forall \wv\in V(\wK).
\label{Eq2:OpIntLoc}
\end{equation}
By construction, $\inter_\wK\in\calL(V(\wK);\wP)$, and $\wP$ is
point-wise invariant by $\inter_{\wK}$.  

Let $K$ be a cell in the mesh $\calT_h$. We introduce a $q\CROSS q$
invertible matrix $\polA_K$ and define the mapping
$\mapK\in \calL(L^1(K;\Real^q);L^1(\wK;\Real^q))$ by
\begin{equation} \label{localization_of_mapK}
\mapK(v) = \polA_K (v\circ\trans_K).
\end{equation}
 It can be shown (see \cite[Prop.~1.61]{ErnGuermond_FEM}) that upon
setting
\begin{subequations}\label{Eq2:genEF}
\begin{align}
&P_K:=\bset  p=\mapKmun(\wp) \tq \wp \in \wP \eset,\label{Eq2:genEF_P}\\
&\Sigma_K:= \{\sigma_{K,i}\}_{i\in\calN} \; \text{s.t.} \;
\sigma_{K,i} = \wsigma_i\circ \mapK,\label{Eq2:genEF_S}
\end{align}
\end{subequations}
the triple $(K,P_K,\Sigma_K)$ is a finite element. Moreover, the interpolation
operator
\begin{equation}
  \inter_{K}(v)(\bx) = 
  \sum_{i\in\calN} \sigma_{K,i}(v) \theta_{K,i}(\bx), \qquad \forall \bx\in K,\quad \forall v\in V(K),
\label{def_of_interK}
\end{equation}
where we have set $\theta_{K,i} := \mapKmun(\wtheta_i)$, is such that
$\inter_K\in\calL(V(K);P_K)$ and $P_K$ is point-wise invariant by $\inter_K$.
Definition \eqref{Eq2:genEF_P} implies that
$P_K\subset W^{1,\infty}(K;\Real^q)$. More generally $\mapK$ maps
$W^{l,p}(K;\Real^q)$ to $W^{l,p}(\wK;\Real^q)$ for all $l\in\Natural$,
all $p\in [1,\infty]$ (with $z^{\pm \frac1p}=1$, $\forall z>0$ if
$p=\infty$) and
\begin{subequations}\label{eq:bnd_calL_psi}\begin{align}
|\mapK|_{\calL(W^{l,p}(K;\Real^q);W^{l,p}(\wK;\Real^q))} 
&\le c\, \|\polA_K\|_{\ell^2}\; \| \Jac_K \|_{\ell^2}^l  \; |{\det(\Jac_K)}|^{-\frac1p},\\
|\mapKmun|_{\calL(W^{l,p}(\wK;\Real^q);W^{l,p}(K;\Real^q))} 
&\le c\, \|\polA_K^{-1}\|_{\ell^2}\;\| \Jac_K^{-1} \|_{\ell^2}^l  \; |{\det(\Jac_K)}|^{\frac1p},\label{eq:bnd_calL_psi_mun}
\end{align}\end{subequations}
for all $K\in\calT_h$, (see \eg
\citep[Thm.~3.1.2]{Ciarlet_FE_Book_2002} or \citep[Lemma
1.101]{ErnGuermond_FEM}). 

\subsection{Structural assumptions}
We henceforth assume that there is a uniform
constant $c$ such that
\begin{equation}
  \|\polA_K\|_{\ell^2}\|\polA_K^{-1}\|_{\ell^2}\le c\, \|\Jac_K\|_{\ell^2}\|\Jac_K^{-1}\|_{\ell^2},
\label{Assumption:AkAkminusone_bounded}
\end{equation}
so that, owing to~\eqref{Eq2:propJK}, $\|\polA_K\|_{\ell^2}\|\polA_K^{-1}\|_{\ell^2}$ is uniformly bounded with respect to $K$ and $h$. 
We also assume that the degrees of freedom over $\wK$ are either point values
or integrals over edges, faces or $\wK$ itself.  This is formalized by
assuming that
\begin{equation}
|\wsigma_{i}(\wv)| \le c \begin{cases}
\|\wv(\wba_i)\|_{\ell^2(\Real^q)}  & \text{if point evaluation at $\wba_i$},\\
\frac{1}{\mes{\wS_{\wK,i}}}
\int_{\wS_{\wK,i}} \|\wv\|_{\ell^2(\Real^q)}\dif s, & \text{otherwise,}
\end{cases}
\label{Integral_bound_sigma_Ki}
\end{equation}
where $\wS_{\wK,i}$ is either an edge, a face, or $\wK$
itself. All these mesh-related geometric entities are assumed to
  be closed sets.

In the case of a point evaluation at $\wba_i$, we observe that
since the cardinal number of $\wSigma$ is finite, there exists a
distance $\wl_0>0$ such that only one of the following situations
occurs: (1) $\wba_i$ is a vertex of $\wK$; (2) $\wba_i$ is in the interior of an edge
of $\wK$ and is at least at distance $\wl_0$ from any vertex; (3)
$\wba_i$ is in the interior of a face of $\wK$ and is at least at distance $\wl_0$ from
any edge; (4) $\wba_i$ is in the interior of $\wK$ and is at least at
distance $\wl_0$ from any face (with the obvious extension in
higher space dimension).  

Let $K\in\calT_h$ and 
denote by $\{\ba_j\}_{j\in \calM_K}$ the collection of points
associated with the degrees of freedom in $K$ defined by point evaluation.
Note that there exists $\wba_i\in \wK$ such that
$\ba_j=\bT_K(\wba_i)$ for all $j\in \calM_K$.  
The shape-regularity of the mesh sequence implies that there is a
constant $c_{\min}$ (uniform with respect to $j$, $K$, and $\calT_h$)
such that the open ball $B(\ba_j,c_{\min} h_K)$ has the following
property: for all $K'$ such that
$K'\cap B(\ba_j,c_{\min} h_K) \ne \emptyset$ and every
$\bx\in K'\cap B(\ba_j, c_{\min} h_K)$, the entire segment
$[\bx,\ba_j]$ is in $K'$. An immediate consequence of this observation
is that
\begin{equation}
\|v(\bx) - v(\ba_j) \|_{\ell^2} \le \|\bx-\ba_j\|_{\ell^2} \|\GRAD v\|_{L^\infty(K';\Real^q)},
\quad \forall\bx\in K'\cap B(\ba_j,c_{\min} h_K) \ne \emptyset,
\label{control_on_vx_minus_vai}
\end{equation}
for all $v\in P_K$. Note that this implies that
$B(\ba_j,c_{\min} h_K)\subset \calT_K$.

In the rest of the paper, we define $\epsilon_{\max}>0$ such that
\begin{subequations}\begin{align}
&\max_{j\in \calM_K} \max_{\by\in B(\bzero,1)}\|\ba_j - (\bvarphi_{\delta(\ba_j)}(\ba_j) +  r \delta(\ba_j)  \by) \|_{\ell^2} 
\le c_{\min} h_K, \label{assumption1_on_epsilon_max} \\
&\cup_{\bx\in K} (\bvarphi_{\delta(\bx)}(\bx) +  r \delta(\bx)  B(\bzero,1) ) \subset \Dom_K,
\label{assumption2_on_epsilon_max} 
\end{align}\end{subequations}
for all $K\in \calT_h$, all $h>0$, and all functions $\delta$ satisfying~\eqref{eq:delta_eps_frh} for any $\epsilon \in (0,\epsilon_{\max}]$.


\subsection{Finite element spaces}
We introduce the broken finite element space
\begin{equation} \label{eq:PTh_b} 
P\upb(\calT_h) = \bset v_h\in
   L^1(\Dom;\Real^q) \tq \mapK(v_{h|K})\in \wP,\, \forall K\in\calT_h\eset,
\end{equation}
where the statement $\mapK(v_{h|K})\in \wP$
is equivalent to $v_{h|K} \in P_K$. 
Notice also that $P\upb(\calT_h) \subset W^{1,\infty}(\calT_h;\Real^q):=
\bset v\in L^\infty(\Dom;\Real^q)\st v_{|K}\in W^{1,\infty}(K;\Real^q),\ \forall
K\in \calT_h\eset$ since $P_K\subset W^{1,\infty}(K;\Real^q)$. We denote by $\inter_h\upb:L^p(\Dom)\to P\upb(\calT_h)$ the interpolation operator such that $\inter_h\upb(v)_{|K}=\inter_K(v_{|K})$,
for all $K\in \calT_h$.

We now introduce the notion of interfaces and jump across interfaces.
We say that a subset $F\subset\overline\Dom$ with a positive
$(d{-}1)$-dimensional measure is an interface if there are distinct
mesh cells $K_l,K_r\in\calT_h$ such that
$F=\partial K_l\cap \partial K_r$. We say that a subset
$F\subset\overline\Dom$ with positive $(d{-}1)$-dimensional measure is
a boundary face if there is a mesh cell $K\in\calT_h$ such that
$F=\partial K\cap \partial \Dom$.  The unit normal vector $\bn_F$ on
$F$ is conventionally chosen to point from $K_l$ to $K_r$ for an
interface and to point outward for a boundary face.  The interfaces are
collected in the set $\calFhi$, the boundary faces  are collected in the set $\calFhb$,
and we let $\calFh= \calFhi\cup \calFhb$.  Let $F\in\calFhi$ be a mesh
interface, and let $K_l,K_r$ be the two cells such that
$F=\partial K_l\cap \partial K_r$; the jump of
$v\in W^{1,1}(\calT_h;\Real^q)$ across $F$ is defined to be
\begin{equation}
\label{Eq:def_jump_scal}
\jump{v}_F(\bx) =v_{|K_l}(\bx) - v_{|K_r}(\bx) \qquad \text{\ae}\ \bx \in F.
\end{equation}

Next we asume to have at hand a Banach space $W\subset L^1(\Dom;\Real^q)$, with
continuous embedding,
where some notion of jump
across interfaces makes sense.  More precisely, we assume that there is a
(bounded) linear trace operator
$\gamma_K: W^{1,1}(K;\Real^q) \longrightarrow L^1(\partial
K;\Real^t)$,
for some $t\ge1$ and for all $K\in \calT_h$, and we define the notion
of $\gamma$-jump across interfaces as follows:
\begin{equation}
\label{Eq:def_gamma_jump}
\jump{v}_F^\gamma(\bx) =\gamma_{K_l}(v_{|K_l})(\bx) - \gamma_{K_r}(v_{|K_r})(\bx) 
\qquad \text{\ae}\ \bx \in F.
\end{equation}
We assume that
$|\jump{v}_F^\gamma(\bx)|\le|\jump{v}_F(\bx)|$, for \ae $\bx\in F$, for all
$v\in W^{1,1}(\calT_h)$, so that
\begin{equation}
\label{eq:gamma_trace_0}
v\in W^{1,1}(\Dom;\Real^q) \implies  (\jump{v}_F^\gamma =
0,\;\forall F\in \calFhi).
\end{equation}
We relate the notion of $\gamma$-jump to the space $W$ by 
assuming that
\begin{equation}
v\in W\cap W^{1,1}(\calT_h;\Real^q) \implies  (\jump{v}_F^\gamma =
0,\;\forall F\in \calFhi),
\end{equation}
and, conversely, that a function in $W^{1,\infty}(\calT_h;\Real^q)$ with zero
$\gamma$-jumps across interfaces is in $W$.
With this setting, we define 
\begin{equation} \label{eq:def_p_v}
P(\calT_h) := P\upb(\calT_h) \cap W.
\end{equation}
The above assumptions imply that 
\begin{equation}
P(\calT_h) = \bset v_h\in
 P\upb(\calT_h)  \tq \jump{v_h}_F^\gamma=0,\ \forall F\in\calFhi\eset.
\label{characterization_of_PcalTh_by_jumps}
\end{equation}

Let $F\in \calFhb$ be a boundary face and denote by $K_F$ the unique cell
such that $F\subset \partial K_F$. We consider the global trace operator $\gamma:
W^{1,1}(\Dom;\Real^q) \longrightarrow L^1(\front;\Real^t)$ such that  
\begin{equation}
\gamma(v)_{|F} = \gamma_{K_F}(v_{|{K_F}}), \qquad \forall F\in \calFhb.
\end{equation}
We assume that $\gamma$ can be extended to $W$ into a bounded linear
operator $\gamma: W\longrightarrow W^\partial$ where $W^\partial$ is
an appropriate Banach space, whose exact structure is not important
for the time being. We define $W_0 = \ker(\gamma)$, \ie
$W_0 = \{v\in W \st \gamma(v) =0\}$.  Let us introduce
$P_0(\calT_h) =P(\calT_h)\cap W_0$:
\begin{equation}
P_0(\calT_h) := \{v_h\in P(\calT_h)\tq \gamma(v_h)=0\}. \label{def_of_Ph0}
\end{equation}


\subsection{Examples}\label{Sec:Examples} 
The present theory covers a large class of scalar- and vector-valued
finite elements like Lagrange, N\'ed\'elec, and Raviart-Thomas
finite elements. To remain general, we denote the three reference elements
corresponding to the above three classes as follows:
$(\wK,\wP\upg,\wSigma\upg)$, $(\wK,\wbP\upc,\wSigma\upc)$ and
$(\wK,\wbP\upd,\wSigma\upd)$. The corresponding domains for the
degrees of freedom are denoted $V\upg(\wK)$, $\bV\upc(\wK)$,
$\bV\upd(\wK)$. We think of $(\wK,\wP\upg,\wSigma\upg)$ as a
scalar-valued finite element ($q=1$) and some of its degrees of freedom
require point evaluation, for instance $(\wK,\wP\upg,\wSigma\upg)$ could be a
Lagrange element. We assume that the finite element
$(\wK,\wbP\upc,\wSigma\upc)$ is vector-valued ($q=d$) and some of its
degrees of freedom require to evaluate integrals over edges. Typically,
$(\wK,\wbP\upc,\wSigma\upc)$ is a N\'ed\'elec-type or edge element.
Likewise, the finite element $(\wK,\wbP\upd,\wSigma\upd)$ is assumed
to be vector-valued ($q=d$) and some of its degrees of freedom are
assumed to require evaluation of integrals over
faces. Typically, $(\wK,\wbP\upd,\wSigma\upd)$ is a
Raviart-Thomas-type element.  The arguments developed herein 
do not require to know the exact structure of the above
elements.

The above assumptions imply that it is admissible to choose
$V\upg(\wK) = W^{s,p}(\wK)$ with $s>\frac{d}{p}$,
$\bV\upc(\wK) = \bW^{s,p}(\wK)$ with $s>\frac{d-1}{p}$, and
$\bV\upd(\wK) = \bW^{s,p}(\wK)$ with $s>\frac{1}{p}$ (recall that
denoting by $M$ a smooth
manifold of dimension $d'$ in $\wK$, the restriction
operator to $M$ is continuous from $W^{s,p}(\wK)$ to $L^p(M)$
provided $s>\frac{d-d'}{p}$). 
Note that it
is also legitimate to choose 
\[
V\upg(\wK) = W^{d,1}(\wK),\quad
\bV\upd(\wK) = \bW^{1,1}(\wK), \quad
\bV\upc(\wK) = \bW^{d-1,1}(\wK),
\]
since $W^{d,1}(\wK)\subset C^0(\wK)$), functions in $W^{1,1}(\wK)$
have a trace in $L^1(\partial \wK)$, and functions in $W^{d-1,1}(\wK)$
have integrable traces on the one-dimensional edges of $\wK$.


Let $\mapKg$, $\mapKc$, $\mapKd$ be the linear maps introduced in
\eqref{localization_of_mapK} for each of the reference finite elements
defined above. In practice $\mapKg$ is the pullback by $\trans_K$, and
$\mapKc$ and $\mapKd$ are the contravariant and covariant
Piola transformations, respectively, \ie
\begin{subequations}\begin{align}
\polA_K\upg &= 1,
&& \mapKg(v) =  v\circ\trans_K, \label{Eq:Def_mapg} \\
\polA_K\upc &=\Jac_K\tr,
&&\mapKc(\bv) = \Jac_K\tr (\bv\circ\trans_K), \label{Eq:def_Piola_rot}\\
\polA_K\upd &=\det(\Jac_K)\,\Jac_K^{-1},
&& \mapKd(\bv) =  \det(\Jac_K)\,\Jac_K^{-1}(\bv\circ\trans_K).\label{Eq:Def_Piola} 
\end{align}\end{subequations}
Note that $c=1$ in \eqref{Assumption:AkAkminusone_bounded} for the above examples. 
We consider the following $\gamma$-traces:
\begin{subequations}\begin{alignat}{2}
\gamma_{K}\upg(v_{|K})(\bx) &:= v_{|K}(\bx),&\quad& \forall \bx\in F,\\
\bgamma_{K}\upc(\bv_{|K})(\bx) &:= \bv_{|K}(\bx)\CROSS \bn_F,&\quad& \forall \bx\in F,\\
\bgamma_{K}\upd(\bv_{|K})(\bx)&:= \bv_{|K}(\bx)\SCAL\bn_F,&\quad &\forall \bx\in F,
\end{alignat}\end{subequations}
and the following finite element spaces:
\begin{subequations}\begin{align}
P\upg(\calT_h)    &:= 
\{v_h \in L^1(\Dom) \st \psi_K\upg(v_{h|K}) {\,\in\,} \wP\upg, \ \forall K\in \calT_h, \
\jump{v_h}_F\upg=0,\ \forall F\in\calFhi\}, \\
\bP\upc(\calT_h)    &:= 
\{\bv_h \in \bL^1(\Dom) \st \bpsi_K\upc(\bv_{h|K}) {\,\in\,} \wbP\upc, \ \forall K\in \calT_h, \
\jump{\bv_h}_F\upc=\bzero,\ \forall F\in\calFhi\}, \hspace{-2pt}\\
\bP\upd(\calT_h)    &:= 
\{\bv_h \in \bL^1(\Dom) \st \bpsi_K\upd(\bv_{h|K}) {\,\in\,} \wbP\upd, \ \forall K\in \calT_h, \
\jump{\bv_h}_F\upd=0,\ \forall F\in\calFhi\},\hspace{-2pt}
\end{align} \end{subequations}
where we simplified the notation by using $\jump{v_h}_F\upg$
instead of $\jump{v_h}_F^{\gamma\upg}$, \etc 
Note the conformity properties $P\upg(\calT_h)\subset Z\upgp(\Dom)$,
$\bP\upc(\calT_h)\subset \bZ\upcp(\Dom)$, and
$\bP\upd(\calT_h)\subset \bZ\updp(\Dom)$.
Likewise, observing that
$Z\upgp_0(\Dom) := \bset v\in Z\upgp(\Dom)\st  \gamma\upg(v)=0\eset$ \etc, we define
\begin{subequations}\begin{align}
P_0\upg(\calT_h) &:= P\upg(\calT_h)\cap Z_0\upgp(\Dom),\\
\bP_0\upc(\calT_h) &:= \bP\upc(\calT_h)\cap \bZ_0\upcp(\Dom),\\
\bP_0\upd(\calT_h) &:= \bP\upd(\calT_h)\cap \bZ_0\updp(\Dom).
\end{align} \end{subequations}

We finally denote by $\inter_h\upg$, $\inter_h\upd$, $\inter_h\upc$
and $\inter_{h0}\upg$, $\inter_{h0}\upd$, $\inter_{h0}\upc$ the
canonical interpolation operators associated with the finite element
spaces $P\upg(\calT_h) $, $\bP\upc(\calT_h)$, $\bP\upd(\calT_h)$, and
$P_0\upg(\calT_h) $, $\bP_0\upc(\calT_h)$, $\bP_0\upd(\calT_h)$.
Note that $\inter\upg_h : W^{s,p}(\Dom) \longrightarrow
P\upg(\calT_h)\subset L^p(\Dom)$ is stable provided $s> \frac{d}{p}$, $\IN_h:
\bW^{s,p}(\Dom) \longrightarrow \bP\upc(\calT_h)\subset \bL^p(\Dom)$
is stable provided $s> \frac{d-1}{p}$ ($=\frac{2}{p}$ for $d=3$), and $\IRT_h:
\bW^{s,p}(\Dom) \longrightarrow \bP\upd(\calT_h)\subset \bL^p(\Dom)$
is stable provided $s>\frac{d-(d-1)}{p}=\frac{1}{p}$.
We finally assume that the polynomial degrees in each of these spaces
are compatible so that the following commuting properties hold with
$s>\frac{d}{p}$:
\begin{equation}
\begin{diagram}[height=1.7\baselineskip,width=1.5cm]
V\upg(\Dom)  & \rTo^{\GRAD} & \bV\upc(\Dom)  & \rTo^{\ROT} & \bV\upd(\Dom)  & \rTo^{\DIV} & L^{p}(\Dom)  \\
\dTo_{\inter_h\upg} & & \dTo_{\IN_h} & & \dTo_{\IRT_h}& & \dTo_{\inter_h\upb} \\
P\upg(\calT_h) & \rTo^{\GRAD} & \bP\upc(\calT_h) & \rTo^{\ROT} & \bP\upd(\calT_h) & \rTo^{\DIV} & P\upb(\calT_h) 
\end{diagram}
\label{Diag:V_Ph}
\end{equation}
where 
\begin{subequations}\begin{align}
V\upg(\Dom) &=\{f\in W^{s,p}(\Dom)\st \GRAD f\in \bW^{s-\frac1p,p}(\Dom)\}, \label{def:Vg}\\
\bV\upc(\Dom) &=\{\bg\in \bW^{s-\frac1p,p}(\Dom)\st \ROT \bg\in \bW^{s-\frac2p,p}(\Dom)\}, \label{def:Vc}\\
\bV\upd(\Dom) &=\{\bg\in \bW^{s-\frac2p,p}(\Dom)\st \DIV \bg\in W^{s-\frac3p,p}(\Dom)\},\label{def:Vd}
\end{align}\end{subequations}
and $\inter_h\upb$ is an interpolation operator only involving integrals over mesh cells.
Likewise, upon introducing
\begin{subequations}\begin{align}
V\upg_0(\Dom)&=\{f\in V\upg(\Dom) \st f_{|\front}=0\},\\
\bV\upc_0(\Dom)&=\{\bg\in \bV\upc(\Dom) \st \bg\CROSS \bn_{|\front}=\bzero\},\\
\bV\upd_0(\Dom)&=\{\bg\in \bV\upd(\Dom) \st \bg\SCAL\bn_{|\front}=0\},
\end{align}\end{subequations}
we assume that the following diagram commutes:
\begin{equation}
\begin{diagram}[height=1.7\baselineskip,width=1.5cm]
V\upg_0 (\Dom) & \rTo^{\GRAD} & \bV\upc_0(\Dom) & \rTo^{\ROT} &\bV\upd_0(\Dom) & \rTo^{\DIV} & L^p(\Dom) \\
\dTo_{\inter_{h0}\upg} & & \dTo_{\IN_{h0}} & & \dTo_{\IRT_{h0}}& & \dTo_{\inter_{h}\upb} \\
P\upg_0(\calT_h) & \rTo^{\GRAD} & \bP\upc_0(\calT_h) & \rTo^{\ROT} & \bP\upd_0(\calT_h) & \rTo^{\DIV} & P\upb(\calT_h) 
\end{diagram}\label{Diag:V0_Ph0}
\end{equation}

\section{Stable, commuting, 
quasi-interpolation projection}
\label{sec:quasi.inter}
We introduce in this section a family of finite-element-based
quasi-interpolation operators (with and without boundary conditions)
that are $L^p$-stable, commute with the standard
differential operators $\GRAD$, $\ROT$, and $\DIV$, and preserve
the above finite element spaces.

\subsection{The operator $\inter_h\calK_\delta$}
Owing to the properties of the smoothing operators established above,
it makes sense to consider the discrete functions
$\inter\upg_h\calK_\delta\upg f \in P\upg(\calT_h)$,
$\inter\upg_{h0}\calK_\delta\upg f \in P_0\upg(\calT_h)$,
$\inter\upb_h\calK_\delta\upb f \in P\upb(\calT_h)$,
$\inter\upc_h\calK_\delta\upc \bg \in \bP\upc(\calT_h)$,
$\inter\upc_{h0}\calK_\delta\upc \bg \in \bP_0\upc(\calT_h)$,
$\inter\upd_h\calK_\delta^d \bg \in \bP\upd(\calT_h)$ and
$\inter\upd_{h0}\calK_\delta^d \bg \in \bP_0\upd(\calT_h)$ for any
integrable scalar-valued function $f$ and any integrable vector-valued
function $\bg$.  We now establish some stability properties of the
restrictions of the operators $\inter\upg_h\calK_\delta\upg$,
$\inter\upg_{h0}\calK_\delta\upg$, $\inter\upb_h\calK_\delta\upb$,
$\inter\upc_h\calK_\delta\upc$, $\inter\upc_{h0}\calK_\delta\upc$,
$\inter\upd_h\calK_\delta^d$, and $\inter\upd_{h0}\calK_\delta^d$ to
the discrete spaces $P\upg(\calT_h)$, $P_0\upg(\calT_h)$, $P\upb(\calT_h)$,
$\bP\upc(\calT_h)$, $\bP_0\upc(\calT_h)$, $\bP\upd(\calT_h)$,
and $\bP_0\upd(\calT_h)$.

To avoid repeating proofs seven times, we denote by $\calI_h$ one of the
interpolation operators introduced above and $\calK_\delta$ the
corresponding smoothing operator; likewise, the range of $\calI_h$ is
denoted $P(\calT_h)$.  We assume that $P(\calT_h)$ is composed of
$\Real^q$-valued fields. 

\begin{remark}[Boundary conditions]
  Note that we do not invoke $\calK_{\delta,0}\upg$,
  $\calK_{\delta,0}\upc$, and $\calK_{\delta,0}\upd$ in the above
  construction.  The theory to be exposed in the next section holds
  by using $\calK_{\delta}\upg$, $\calK_{\delta}\upc$, and
  $\calK_{\delta}\upd$ in all the cases, whether homogeneous boundary
  conditions are enforced or not in the discrete spaces.
\end{remark} 

\subsection{$L^p$-stability of the operator $\inter_h\calK_\delta$}
We start
with a key result in the spirit of
\cite[Lem.~4.2]{Christiansen_Winther_mathcomp_2008}, see also \cite[Lem.~6]{Schoberl:05}.  This result is crucial to devise a quasi-interpolation operator that preserves the finite element space $P(\calT_h)$.
\begin{lemma}[Discrete $L^p$-approximation] 
  \label{Lem_IL_Kdelta_Lp_stability} 
  There is $c_{\text{\rm stab}}>0$, uniform with respect to the mesh
  sequence, such that
  $\|f_h - \inter_h\calK_\delta f_h\|_{L^p(\Dom;\Real^q)}\le c_{\text{\rm stab}}
  \epsilon \|f_h\|_{L^p(\Dom;\Real^q)}$
  for all $\epsilon \in (0,\epsilon_{\max}]$, all $f_h\in P(\calT_h)$
  and all $p\in [1,\infty]$.
\end{lemma}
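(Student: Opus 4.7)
The plan is to localize the error on each cell $K$, reduce it to the evaluation of the degrees of freedom of $v:=f_h-\calK_\delta f_h$, and then control those evaluations pointwise by a first-order Taylor-type bound combined with an inverse estimate.

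Since $f_h|_K\in P_K$ is pointwise invariant under $\inter_K$, the identity $(f_h-\inter_h\calK_\delta f_h)|_K = \inter_K\bigl((f_h-\calK_\delta f_h)|_K\bigr)$ holds. Expanding $\inter_K v = \sum_i \sigma_{K,i}(v)\theta_{K,i}$, using $\sigma_{K,i}=\wsigma_i\circ\mapK$ together with~\eqref{Integral_bound_sigma_Ki}, and invoking~\eqref{eq:bnd_calL_psi_mun} to scale the reference shape functions as $\|\theta_{K,i}\|_{L^p(K;\Real^q)}\le c\,\|\polA_K^{-1}\|_{\ell^2}h_K^{d/p}$ gives, thanks to~\eqref{Assumption:AkAkminusone_bounded},
\[
\|\inter_K v\|_{L^p(K;\Real^q)} \le c\,h_K^{d/p}\,\max_{\bx\in\Dom_K^\star}\|v(\bx)\|_{\ell^2},
\]
where $\Dom_K^\star\subset K$ denotes the union of the isolated points and sub-manifolds seen by the dofs of $K$.

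I then estimate $\|v(\bx)\|_{\ell^2}$ for $\bx\in\Dom_K^\star$. Setting $\bz_{\bx,\by}:=\bvarphi_{\delta(\bx)}(\bx)+\radius\,\delta(\bx)\,\by$, I split
\[
v(\bx) = (\polI-\polA_\delta(\bx))f_h(\bx) - \polA_\delta(\bx)\int_{B(\bzero,1)}\rho(\by)\bigl(f_h(\bz_{\bx,\by})-f_h(\bx)\bigr)\dif y.
\]
The first term is bounded by $c\,\delta(\bx)\,\|f_h(\bx)\|_{\ell^2}$ by~\eqref{Assumption_polA_delta}. For the second, the key geometric point is that at a node $\ba_j$ supporting a point-value dof, assumption~\eqref{assumption1_on_epsilon_max} places $\bz_{\ba_j,\by}$ inside $B(\ba_j,c_{\min}h_K)$, and~\eqref{control_on_vx_minus_vai} then furnishes a cell $K'\in\calT_K$ whose closure contains the full segment $[\ba_j,\bz_{\ba_j,\by}]$; applied to the polynomial $f_h|_{K'}$, this yields $\|f_h(\bz_{\ba_j,\by})-f_h(\ba_j)\|_{\ell^2}\le c\,\delta(\ba_j)\,\|\GRAD f_h\|_{\bL^\infty(K')}$. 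Combining with the inverse estimate $h_{K'}\|\GRAD f_h\|_{\bL^\infty(K')}\le c\,\|f_h\|_{\bL^\infty(K')}$, with $\delta\le c\,\epsilon h_K$, and with shape regularity gives $\|v(\ba_j)\|_{\ell^2}\le c\,\epsilon\,\|f_h\|_{\bL^\infty(\Dom_K)}$; the very same bound holds at every point of a sub-manifold carrying an integral-type dof, since~\eqref{assumption2_on_epsilon_max} keeps $\bz_{\bx,\by}$ inside $\Dom_K$ and a straightforward adaptation of~\eqref{control_on_vx_minus_vai} still applies.

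Substituting into the local bound and using the inverse estimate $h_K^{d/p}\|f_h\|_{\bL^\infty(\Dom_K)}\le c\,\|f_h\|_{L^p(\Dom_K;\Real^q)}$ yields $\|(f_h-\inter_h\calK_\delta f_h)|_K\|_{L^p(K;\Real^q)}\le c\,\epsilon\,\|f_h\|_{L^p(\Dom_K;\Real^q)}$; an $\ell^p$-summation over $K\in\calT_h$ together with the bounded-overlap property of $\{\Dom_K\}_{K\in\calT_h}$ (a consequence of the shape regularity of $\famTh$) delivers the stated global bound with constant $c_{\text{stab}}$ independent of $h$ and of $\epsilon\in(0,\epsilon_{\max}]$, the case $p=\infty$ being handled identically after replacing sums by maxima. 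The main obstacle is the pointwise bound on $v$ at dof locations lying on $\partial K$: there, $\bvarphi_{\delta}$ can send $\bx$ into a neighbouring cell, so that the difference $f_h(\bz_{\bx,\by})-f_h(\bx)$ genuinely crosses an interface of $\calT_h$. Assumption~\eqref{control_on_vx_minus_vai}, which produces a straight-line segment contained in a single cell $K'$, is precisely what turns this a priori ill-defined difference into a bona fide directional increment of the polynomial $f_h|_{K'}$, whereupon the inverse estimate delivers the factor $\epsilon$ cleanly.
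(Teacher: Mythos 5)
Your opening reduction and your treatment of the point-evaluation degrees of freedom match the paper closely, but there is a genuine gap at the integral-type degrees of freedom: the claimed pointwise bound $\|v(\bx)\|_{\ell^2}\le c\,\epsilon\,\|f_h\|_{L^\infty(\Dom_K;\Real^q)}$ for \emph{all} $\bx\in S_{K,i}$ via ``a straightforward adaptation of~\eqref{control_on_vx_minus_vai}'' is false. The hypothesis~\eqref{control_on_vx_minus_vai} rests on the geometric separation $\wl_0>0$ between a point-evaluation node $\wba_i$ and any lower-dimensional feature of $\wK$, which is precisely what guarantees a ball $B(\ba_j,c_{\min}h_K)$ in which every segment to $\ba_j$ stays in a single cell. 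A generic point $\bx$ in a face or edge $S_{K,i}$ (or in $K$, for volume dofs) has no such protected neighbourhood: if $\bx$ is within distance $O(\delta(\bx))$ of $\partial S_{K,i}$, then $\bz_{\bx,\by}$ may land in a cell $K''$ that meets $\bx$ only through a lower-dimensional feature, and the segment $[\bx,\bz_{\bx,\by}]$ crosses one or more interfaces. Since $f_h\in P(\calT_h)$ is only piecewise Lipschitz and only the $\gamma$-trace (e.g.\ tangential or normal component) is continuous across interfaces, $f_h(\bz_{\bx,\by})-f_h(\bx)$ can be $O(1)$ there, not $O(\epsilon)$. Consequently your local estimate $\|\inter_K v\|_{L^p(K;\Real^q)}\le c\,h_K^{d/p}\max_{\bx\in\Dom_K^\star}\|v(\bx)\|_{\ell^2}$, while true, is too lossy: for an integral dof, $|\sigma_{K,i}(v)|$ is controlled by the \emph{average} of $\|v\|_{\ell^2}$ over $S_{K,i}$, and replacing that average by the maximum destroys exactly the cancellation you need.

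The paper's proof resolves this by splitting $S_{K,i}$ into an interior part $S_{K,i}^{\text{int}}$, where $\bz_{\bx,\by}$ remains in cells $K'\in\calT_K$ containing $S_{K,i}$ and the gradient/inverse-estimate argument applies verbatim, and a boundary layer $S_{K,i}^{\text{bnd}}=S_{K,i}\setminus S_{K,i}^{\text{int}}$. On $S_{K,i}^{\text{bnd}}$ one only has the crude bound $\|v\|_{\ell^2}\le c\,\|f_h\|_{L^\infty(\Dom_K;\Real^q)}$, but mesh regularity shows that $|S_{K,i}^{\text{bnd}}|\le c\,\epsilon\,|S_{K,i}|$, so the factor $\epsilon$ is recovered from the small \emph{measure} of the bad set rather than from a pointwise Taylor-type bound. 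You would need to add exactly this two-region argument (or an equivalent averaging device) for your proof to close; as written, the passage from point-value dofs to integral dofs is not a ``straightforward adaptation.''
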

\begin{proof} 
 (1)  Let $f_h\in P(\calT_h)$ and let us set
  $e_h:= f_h - \inter_h \calK_\delta f_h$ and
  $e:= f_h - \calK_\delta f_h$; note that
  $e_h = \inter_h e$. Let $K$ be a cell in $\calT_h$, then
  using that $\theta_{K,i} := \psi_K^{-1}(\wtheta_i)$, we have
  $\|\theta_{K,i}\|_{L^p(K;\Real^q)} \le \det(\Jac_K)^{\frac1p}
  \|\polA_K^{-1}\|_{\ell^2} \|\wtheta_{i}\|_{L^p(\wK;\Real^q)}$
  for all $i\in \intset{1}{\nf}$, and we infer that
\begin{align}
\|e_h\|_{L^p(K;\Real^q)} = \|\inter_h e\|_{L^p(K;\Real^q)} 
&\le \sum_{i\in\calN} |\sigma_{K,i}(e)| \|\theta_{K,i}\|_{L^p(K;\Real^q)}\nonumber\\
&\le  \det(\Jac_K)^{\frac1p}\|\polA_K^{-1}\|_{\ell^2} \sum_{i\in\calN}|\sigma_{K,i}(e)|, \label{eq:bnd_eh}
\end{align}
The rest of the proof consists of estimating $\sigma_{K,i}(e)$. 

(2) Let us assume first that the degree of freedom $\sigma_{K,i}$ is a
value at a point $\ba_j:=\bT_K(\wba_i)$ in $K$.  Then using the
assumption~\eqref{Integral_bound_sigma_Ki} and the definition~\eqref{localization_of_mapK} of $\mapK$, we infer that
$|\sigma_{K,i}(e)| \le c\, \|\polA_K\|_{\ell^2} \|e(\ba_j)\|_{\ell^2}$.
By proceeding as in the proof of
Theorem~\ref{Thm:Kdeltaf_to_f_in_LP} (step (2)), we obtain
\begin{align*}
e(\ba_j)= f_h(\ba_j) - \calK_\delta f_h(\ba_j) = \int_{B(\bzero,1)} \rho(\by)
\big(f_h(\ba_j) - f_h(\bvarphi_{\delta(\ba_j)}(\ba_j)+ \delta(\ba_j) \radius \by) \big)\dif y.
\end{align*}
Owing to \eqref{control_on_vx_minus_vai} and
\eqref{assumption1_on_epsilon_max} (recall that $\epsilon\le
\epsilon_{\max}$), we have
\begin{align*}
\|e(\ba_j)\|_{\ell^2} &\le c \max_{\by\in B(\bzero,1)} 
\|f_h(\ba_j) - f_h(\bvarphi_{\delta(\ba_j)}(\ba_j)+ \delta(\ba_j) \radius \by)\|_{\ell^2}\\
&\le c' \delta(\ba_j)\max_{K'\in \calT_K}  \|\GRAD f_h\|_{\bL^\infty(K';\Real^q)}  
\le c'' \epsilon h_K \max_{K'\in \calT_K}\|\GRAD f_h\|_{\bL^\infty(K';\Real^q)}.
\end{align*}
Finally using a local inverse inequality, which is legitimate since
the mesh sequence is shape-regular, we infer that $|\sigma_{K,i}(e)|
\le c\, \epsilon \|\polA_K\|_{\ell^2}
\|f_h\|_{L^\infty(\Dom_K;\Real^q)}$.  Note that the purpose of the
above argument is to account for the fact that $f_h$ is (a priori)
only piecewise Lipschitz (\ie can be discontinuous across interfaces)
but $f_h$ is necessarily continuous at $\ba_j$.

(3) If the degree of freedom $\sigma_{K,i}$ is an integral over an
edge, face or over $K$, we use \eqref{Integral_bound_sigma_Ki}, \ie
$|\sigma_{K,i}(e)| \le c\|\polA_K\|_{\ell^2}\frac{1}{\mes{S_{K,i}
  }} \int_{S_{K,i}} \|e\|_{\ell^2} \dif s$.
We define
$\calT_{S_{K,i}} =\{K'\in \calT_K \st {S_{K,i}} \subset K'\}$ and we
introduce
$S_{K,i}\upint = \{ \bx \in S_{K,i} \st \bvarphi_{\delta(\bx)}(\bx)
+\delta(\bx)\radius B(\bzero,1) \subset \calT_{S_{K,i}}\}$
and $S_{K,i}\upbnd = S_{K,i}{\setminus} S_{K,i}\upint$. Then using
\eqref{assumption2_on_epsilon_max} and setting
$\bpsi_\delta(\bx,\by)=\bvarphi_{\delta(\bx)}(\bx) +\delta(\bx)\radius\by$, we have
\begin{align*}
  \int_{S_{K,i}\upint} \|e\|_{\ell^2} \dif s &\le 
  \int_{S_{K,i}\upint} \int_{B(\bzero,1)} \rho(\by)
  \left\|f_h(\bpsi_\delta(\bx,\by)) - f_h(\bx)\right\|_{\ell^2} \dif y\dif s \\
  &\le  \int_{S_{K,i}\upint} \sum_{K'\in \calT_{S_{K,i}}
  }\int_{\aatop{\by\in B(\bzero,1)}{\bpsi_\delta(\bx,\by)\in K'}}
  \left\|f_h(\bpsi_\delta(\bx,\by)) -
    f_h(\bx)\right\|_{\ell^2} \dif s\dif y \\
  & \le c \mes{S_{K,i}} \epsilon h_K \sum_{K'\in \calT_{S_{K,i}}}\|\GRAD f_h\|_{\bL^\infty(K';\Real^{q})} 
\le c \mes{S_{K,i}} \epsilon \|f_h\|_{L^\infty(\Dom_K;\Real^q)},
\end{align*}
where we used the shape-regularity of the mesh sequence (\ie $h_{K'}
\le c h_K$) and an inverse inequality.  Note again that the above
construction is meant to account for the fact that $f_h$ is (a priori)
only piecewise Lipschitz.  Moreover, if $\bx\in S_{K,i}\upbnd$, then
there is $\by\in B(\bzero,1)$ such that
$\bz:=\bvarphi_{\delta(\bx)}(\bx) + \delta(\bx)r\by$ is not in
$\calT_{S_{K,i}}$; then mesh-regularity implies that $ c\,
\dist(\bx,\partial S_{K,i}) \le \|\bz-\bx\|_{\ell^2}$ and that
$\|\bz-\bx\|_{\ell^2} \le c \delta(\bx) \le c' \epsilon
h_K$. Combining these bounds, we obtain that $|S_{K,i}\upbnd| \le
c\epsilon h_K |\partial S_{K,i}| \le c' \epsilon |S_{K,i}|$ (with the
convention that the $0$-dimensional measure of a point is $1$). As a
result, we infer that
\begin{align*}
  \int_{S_{K,i}\upbnd} \|e\|_{\ell^2} \dif s & \le
  \int_{S_{K,i}\upbnd} (\|f_h\|_{\ell^2}+ \|\calK_\delta f_h\|_{\ell^2})\dif s  \\
& \le c
  \|f_h\|_{L^\infty(\Dom_K;\Real^q)} \mes{S_{K,i}\upbnd} 
\le c'  \epsilon\, \mes{S_{K,i}} \|f_h\|_{L^\infty(\Dom_K;\Real^q)}.
\end{align*}
Combining the above two estimates yields
$|\sigma_{K,i}(e)| \le c\, \epsilon
\|\polA_K\|_{\ell^2}\|f_h\|_{L^\infty(\Dom_K;\Real^q)}$.

(4) We have established that $|\sigma_{K,i}(e)| \le c\, \epsilon
\|\polA_K\|_{\ell^2}\|f_h\|_{L^\infty(\Dom_K;\Real^q)}$ for all
possible degrees of freedom. Using the fact that
$\|\polA_K\|_{\ell^2}\|\polA_K^{-1}\|_{\ell^2}$ is uniformly bounded
together with an inverse inequality from $L^\infty(\Dom_K;\Real^q)$ to
$L^p(\Dom_K;\Real^q)$, we deduce that
\begin{align*}
  \|f_h -\calI_h\calK_\delta f_h\|_{L^p(K;\Real^q)} & =\|e_h\|_{L^p(K;\Real^q)} \\
&\le c\, \epsilon\det(\Jac_K)^{\frac1p}\|\polA_K^{-1}\|_{\ell^2} \|\polA_K\|_{\ell^2} \|f_h\|_{L^\infty(\Dom_K;\Real^q)} \\
&\le c\, \epsilon\, \|f_h\|_{L^p(\Dom_K;\Real^q)}.
\end{align*} 
We infer the desired result by summing over $K\in\calT_h$ and by invoking
the shape-regularity of the mesh sequence.
\end{proof}

The above lemma implies that
$\|(\polI - \inter_h\calK_\delta)_{|P(\calT_h)}\|_{\calL(L^p;L^p)}\le
c_{\text{\rm stab}} \epsilon$
for all $\epsilon \in (0,\epsilon_{\max}]$.  From now on we choose
$\epsilon$ once and for all by setting $\epsilon=\epsilon_{\min}$
with $\epsilon_{\min} := \min(\epsilon_{\max},(2c_{\text{\rm stab}})^{-1})$. Lemma~\ref{Lem_IL_Kdelta_Lp_stability} then implies that
\begin{equation}
\|(\polI -
\inter_h\calK_\delta)_{|P(\calT_h)}\|_{\calL(L^p;L^p)}\le \frac12.
\end{equation} 
This proves that $\inter_h\calK_{\delta|P(\calT_h)}$ is invertible for this
particular choice of $\epsilon$. Let $J_h: P(\calT_h) \longrightarrow P(\calT_h)$ be
the inverse of $ \inter_h\calK_{\delta|P(\calT_h)}$, \ie
\begin{equation}
  J_h  \inter_h\calK_{\delta|P(\calT_h)} = \inter_h\calK_{\delta|P(\calT_h)} J_h = \polI. \label{def_of_Jh}
\end{equation}
Note that the definition of $J_h$ implies that $\|J_h\|_{\calL(L^p;L^p)} \le 2$.

\begin{lemma}[$L^p$-stability] \label{Lem:Lp_stability_Ih_Kdelta} Let $\epsilon=\epsilon_{\min}$. 
  There is $c(\epsilon_{\min})$, uniform with respect to $h$, such that the following estimate holds:
  $\|\calI_h\calK_\delta\|_{\calL(L^p;L^p)}\le c(\epsilon_{\min})$.
\end{lemma}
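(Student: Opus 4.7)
My plan is to estimate $\|\calI_h\calK_\delta f\|_{L^p(K;\Real^q)}$ cell by cell and then sum using the shape-regularity of the mesh sequence. The route closely mirrors the cell-local computations already carried out in the proof of Lemma~\ref{Lem_IL_Kdelta_Lp_stability}, but it replaces the first-order difference bound used there (which produced the small factor $\epsilon$) by a zero-order Hölder bound that now produces a negative power $\epsilon^{-d/p}$; since $\epsilon=\epsilon_{\min}$ has been fixed once and for all, this is exactly the kind of $h$-uniform constant allowed in the claim.

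First I would expand $(\calI_h\calK_\delta f)_{|K}=\sum_{i\in\calN}\sigma_{K,i}(\calK_\delta f)\,\theta_{K,i}$ and use the Piola-type bound $\|\theta_{K,i}\|_{L^p(K;\Real^q)}\le c\,\|\polA_K^{-1}\|_{\ell^2}|\det(\Jac_K)|^{1/p}$ from~\eqref{eq:bnd_calL_psi}. Combined with the structural assumption~\eqref{Integral_bound_sigma_Ki} and the affine change of variables between $\wK$ and $K$, this gives $|\sigma_{K,i}(\calK_\delta f)|\le c\,\|\polA_K\|_{\ell^2}\,B_i(\calK_\delta f)$, where $B_i(v)$ is either $\|v(\ba_j)\|_{\ell^2}$ in the point-value case (with $\ba_j=\bT_K(\wba_i)$), or the average $\frac{1}{\mes{S_{K,i}}}\int_{S_{K,i}}\|v\|_{\ell^2}\dif s$ in the integral case.

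The key ingredient is then a pointwise bound on $\calK_\delta f$. Starting from the definition~\eqref{def_calK_delta}, using the uniform boundedness of $\polA_\delta$, performing the change of variables $\by\mapsto \bz=\bvarphi_{\delta(\bx)}(\bx)+\delta(\bx)\radius\,\by$ (whose Jacobian is $(\delta(\bx)\radius)^d\sim(\epsilon h_K)^d$), and applying Hölder's inequality on the $\bz$-integral, I would obtain
\[
\|\calK_\delta f(\bx)\|_{\ell^2}\le c\,(\epsilon h_K)^{-d/p}\,\|f\|_{L^p(\Dom_K;\Real^q)},\qquad\forall\,\bx\in K,
\]
for $p\in[1,\infty)$; assumption~\eqref{assumption2_on_epsilon_max} ensures that $\bz$ stays in $\Dom_K$, and the case $p=\infty$ is trivial with the same $\Dom_K$-support. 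Integrating this pointwise estimate over $S_{K,i}$ and dividing by $\mes{S_{K,i}}$ handles the surface-average degree of freedom without invoking any trace inequality, since $\calK_\delta f$ is already smooth. Thus $B_i(\calK_\delta f)\le c\,(\epsilon h_K)^{-d/p}\,\|f\|_{L^p(\Dom_K;\Real^q)}$ in all cases.

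Assembling the three ingredients, cancelling $\|\polA_K\|_{\ell^2}\|\polA_K^{-1}\|_{\ell^2}$ by~\eqref{Assumption:AkAkminusone_bounded}, and using $|\det(\Jac_K)|\sim h_K^d$ from~\eqref{Eq2:propJK}, the geometric scalings combine to give $\|\calI_h\calK_\delta f\|_{L^p(K;\Real^q)}\le c\,\epsilon^{-d/p}\,\|f\|_{L^p(\Dom_K;\Real^q)}$. Raising to the $p$-th power and summing over $K$ (or taking suprema if $p=\infty$), shape-regularity guarantees that each point of $\Dom$ belongs to a uniformly bounded number of patches $\Dom_K$, so the local bounds add up to $\|\calI_h\calK_\delta f\|_{L^p(\Dom;\Real^q)}\le c(\epsilon_{\min})\,\|f\|_{L^p(\Dom;\Real^q)}$ with $c(\epsilon_{\min})\sim\epsilon_{\min}^{-d/p}$. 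The only place where one might pause is the surface-average degree of freedom, where a trace inequality on $f\in L^p$ would be unavailable; but the pointwise Hölder bound on the already smooth $\calK_\delta f$ side-steps that issue, making it the main (though ultimately minor) subtlety of the argument.
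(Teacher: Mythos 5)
Your proposal is correct and follows essentially the same route as the paper: decompose cell by cell, bound each degree of freedom via the structural assumption~\eqref{Integral_bound_sigma_Ki}, establish a pointwise bound on $\calK_\delta f$ by changing variables $\by\mapsto\bz$ and applying H\"older, then reassemble using $\|\polA_K\|_{\ell^2}\|\polA_K^{-1}\|_{\ell^2}$ bounded and shape-regularity of the patches $\Dom_K$. The only cosmetic difference is that the paper factors the pointwise bound into a separate statement (Lemma~\ref{Lem:Local_inverse_ineq_for_Kdelta}), applying H\"older over the full patch $\Dom_K$ and obtaining $\|\calK_\delta f\|_{L^\infty(K)}\le c\,\epsilon_{\min}^{-d}\,\mes{K}^{-1/p}\|f\|_{L^p(\Dom_K)}$, whereas you apply H\"older directly on the ball of radius $\delta(\bx)\radius$ and get the slightly sharper $\epsilon^{-d/p}$ dependence; both yield the same $h$-uniform conclusion since $\epsilon=\epsilon_{\min}$ is fixed.
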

\begin{proof}
 Let $f\in L^{p}(\Dom;\Real^q)$ and
  assume $p<\infty$. Then
\begin{align*}
\|\calI_h\calK_\delta f \|_{L^p(\Dom;\Real^q)}^p 
& = \sum_{K\in\calT_h} \int_K \left\|\sum_{i\in\calN} \sigma_{K,i}(\calK_\delta f) \theta_{K,i}(\bx)\right\|_{\ell^2}^p\dif x\\
&\le c \sum_{K\in\calT_h} \int_K \sum_{i\in\calN} |\sigma_{K,i}(\calK_\delta f)|^p \|\theta_{K,i}(\bx)\|_{\ell^2}^p\dif x.
\end{align*}
Using \eqref{Integral_bound_sigma_Ki}, we infer that
\begin{align*}
  \|\calI_h\calK_\delta f \|_{L^p(\Dom;\Real^q)}^p 
  &\le c \sum_{K\in\calT_h} \sum_{i\in\calN} \|\polA_K\|_{\ell^2}^p\|\calK_\delta f\|_{L^\infty(K;\Real^q)}^p 
    \|\polA_K^{-1}\|_{\ell^2}^p \mes{K},
\end{align*}
since $|\sigma_{K,i}(\calK_\delta f)| \le c\,\|\polA_K\|_{\ell^2}\|\calK_\delta f\|_{L^\infty(K;\Real^q)}$ and $\|\theta_{K,i}(\bx)\|_{L^\infty(K;\Real^q)}\le c\,\|\polA_K^{-1}\|_{\ell^2}$.
We conclude by invoking
Lemma~\ref{Lem:Local_inverse_ineq_for_Kdelta} below. The argument for
$p=\infty$ is similar.
\end{proof}

\begin{lemma}[Local inverse inequality] \label{Lem:Local_inverse_ineq_for_Kdelta} Let
  $\epsilon=\epsilon_{\min}$.  There is a uniform constant $c>0$ such
  that
\begin{equation}
\|\calK_\delta f \|_{L^\infty(K;\Real^q)} \le c  \epsilon_{\min }^{-d} \mes{K}^{-\frac1p} \|f\|_{L^p(\Dom_K;\Real^q)},
\end{equation}
for all $K\in \calT_h$, all $h>0$, and all $f\in L^p(\Dom;\Real^q)$. 
\end{lemma}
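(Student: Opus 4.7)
The plan is to reduce to the scalar smoothing operator $\calK_\delta\upg$ and then obtain the pointwise bound by a change of variable followed by Hölder's inequality. By the definition \eqref{def_calK_delta} of $\calK_\delta$ and the $\ell=0$ case of \eqref{Assumption_polA_delta}, the matrix field $\polA_\delta$ is uniformly bounded on $\Dom$ (independently of $\delta$), so it suffices to establish the stated estimate with $\calK_\delta\upg$ applied componentwise to $f$.

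Fix $\bx\in K$. Starting from $|(\calK_\delta\upg f)(\bx)| \le \int_{B(\bzero,1)} \rho(\by)\,|f(\bvarphi_{\delta(\bx)}(\bx) + \delta(\bx) r \by)|\dif y$, I would perform the change of variable $\bz = \bvarphi_{\delta(\bx)}(\bx) + \delta(\bx) r \by$ with $\bx$ held fixed; this introduces the Jacobian factor $(\delta(\bx) r)^{-d}$, and by assumption \eqref{assumption2_on_epsilon_max} the image set $B(\bvarphi_{\delta(\bx)}(\bx),\delta(\bx) r)$ is contained in $\Dom_K$. Bounding $\rho$ by $\|\rho\|_{L^\infty}$ and applying Hölder's inequality with exponents $p$ and $p'$ then yields
\[
|(\calK_\delta\upg f)(\bx)| \le c\,(\delta(\bx) r)^{-d}\,\bigl|B(\bvarphi_{\delta(\bx)}(\bx),\delta(\bx) r)\bigr|^{1-1/p}\,\|f\|_{L^p(\Dom_K;\Real^q)} \le c\,(\delta(\bx) r)^{-d/p}\,\|f\|_{L^p(\Dom_K;\Real^q)}.
\]

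Finally, the relation $\delta(\bx) = \epsilon\,\frh(\bx) \ge c'\epsilon\, h_K$ for $\bx\in K$, combined with mesh shape-regularity $|K|\asymp h_K^d$, gives $(\delta(\bx) r)^{-d/p} \le c\,\epsilon^{-d/p}|K|^{-1/p}$; since $\epsilon\in(0,1]$ and $p\ge 1$ one has $\epsilon^{-d/p} \le \epsilon^{-d}$, yielding the stated bound. The $p=\infty$ case follows trivially (with a sharper $\epsilon^0$ factor), and taking the supremum over $\bx\in K$ closes the argument. There is no serious obstacle here; the only delicate points are invoking \eqref{assumption2_on_epsilon_max} to justify that the change-of-variable domain sits inside $\Dom_K$ (not merely inside $\Dom$ as given by Lemma~\ref{Lem:shrinking_Lipschitz}(iii)), and handling the $\bx$-dependence of $\delta$ via the pointwise lower bound on $\frh$.
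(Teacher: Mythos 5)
Your proposal is correct and follows essentially the same route as the paper: bound $\rho$ by a constant, change variables $\bz = \bvarphi_{\delta(\bx)}(\bx) + \delta(\bx) r \by$ (producing the $(\delta(\bx) r)^{-d}$ Jacobian factor and landing in $\Dom_K$ via \eqref{assumption2_on_epsilon_max}), apply H\"older, and finish with $\delta(\bx) \sim \epsilon h_K$ and shape-regularity. The only cosmetic difference is that you apply H\"older over the small ball $B(\bvarphi_{\delta(\bx)}(\bx),\delta(\bx) r)$, which gives the slightly sharper factor $\epsilon^{-d/p}$, whereas the paper integrates over all of $\Dom_K$ and keeps $\epsilon^{-d}$; you correctly note that $\epsilon^{-d/p}\le\epsilon^{-d}$ recovers the stated bound.
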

\begin{proof}
Let $\bx\in K$.
Since the function $\rho$ is bounded, we infer that
\[
\|\calK_\delta f (\bx)\|_{\ell^2} \le  c \int_{B(\bzero,1)} \|f(\bvarphi_{\delta(\bx)}(\bx) +\delta(\bx)\radius \by)\|_{\ell^2} \dif y.
\]
The condition \eqref{assumption2_on_epsilon_max} implies that 
\[
\|\calK_\delta f (\bx)\|_{\ell^2} \le c\, \|\delta^{-1}\|_{L^\infty(\Dom_K)}^{d} \int_{\Dom_K} \|f(\bz)\|_{\ell^2} \dif z
\le c\, \epsilon_{\min}^{-d} h_K^{-d} \mes{\Dom_K}^{1-\frac1p} \|f\|_{L^p(\Dom_K;\Real^q)},
\]
and we conclude using the shape-regularity of the mesh sequence.
\end{proof}

\subsection{Main result}
We now define the following operator
\begin{align}
\calJ_h = J_h \inter_h \calK_\delta,
\end{align}
and we state the main result of this section.
\begin{theorem}[Properties of $\calJ_h$] \label{Th:Pih} The following properties hold:
\berom
\item $P(\calT_h)$ is point-wise invariant under $\calJ_h$;
\item \label{Th:Pih_item2} There is $c$, uniform with respect to $h$, such that
  $\|\calJ_h\|_{\calL(L^p;L^p)}\le c$ and
\[
\|f - \calJ_h f \|_{L^p(\Dom;\Real^q)} \le c \inf_{f_h \in P(\calT_h)} \|f -
f_h\|_{L^p(\Dom;\Real^q)},
\]
for all $f\in L^p(\Dom;\Real^q)$;
\item \label{Th:Pih_item3} $\calJ_h$ commutes with the standard
  differential operators, \ie the following diagrams are commutative:
\eerom
\begin{equation}
\begin{diagram}[height=1.7\baselineskip,width=1.5cm]
Z\upgp (\Dom) & \rTo^{\GRAD} & \bZ\upcp(\Dom) & \rTo^{\ROT} & \bZ\updp(\Dom) & \rTo^{\DIV} &L^p(\Dom)\\
\dTo_{\calJ\upg_h} & & \dTo_{\calJ\upc_h} & & \dTo_{\calJ\upd_h}& & \dTo_{\calJ\upb_h} \\
P\upg(\calT_h) & \rTo^{\GRAD} & \bP\upc(\calT_h) & \rTo^{\ROT} &
\bP\upd(\calT_h) & \rTo^{\DIV} & P\upb(\calT_h)  
\end{diagram}
\end{equation}
\begin{equation}
\begin{diagram}[height=1.7\baselineskip,width=1.5cm]
Z\upgp_0 (\Dom) & \rTo^{\GRAD} & \bZ\upcp_0(\Dom) & \rTo^{\ROT} & \bZ\updp_0(\Dom) & \rTo^{\DIV} &L^p(\Dom)\\
\dTo_{\calJ_{h0}\upg} & & \dTo_{\calJ_{h0}\upc} & & \dTo_{\calJ_{h0}\upd}& & \dTo_{\calJ_{h}\upb} \\
P\upg_0(\calT_h) & \rTo^{\GRAD} & \bP\upc_0(\calT_h) & \rTo^{\ROT} & \bP_0\upd(\calT_h) & \rTo^{\DIV} & P\upb(\calT_h) 
\end{diagram}
\end{equation}
\end{theorem}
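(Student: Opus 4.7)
Item (i) is immediate from~\eqref{def_of_Jh}: for $f_h \in P(\calT_h)$, one has $\inter_h\calK_\delta f_h \in P(\calT_h)$, hence $\calJ_h f_h = J_h\inter_h\calK_\delta f_h = f_h$ by definition of $J_h$ as the inverse of $\inter_h\calK_{\delta|P(\calT_h)}$.

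For item (ii), the stability bound $\|\calJ_h\|_{\calL(L^p;L^p)} \le 2\,c(\epsilon_{\min})$ follows by composing $\|J_h\|_{\calL(L^p;L^p)} \le 2$ (stated just after~\eqref{def_of_Jh}) with the bound from Lemma~\ref{Lem:Lp_stability_Ih_Kdelta}. The quasi-best-approximation estimate is then a direct consequence of stability and the invariance just established: for every $f_h \in P(\calT_h)$,
\begin{equation*}
\|f - \calJ_h f\|_{L^p(\Dom;\Real^q)} \;=\; \|(\polI - \calJ_h)(f - f_h)\|_{L^p(\Dom;\Real^q)} \;\le\; (1 + 2\,c(\epsilon_{\min}))\,\|f - f_h\|_{L^p(\Dom;\Real^q)},
\end{equation*}
and one takes the infimum over $f_h \in P(\calT_h)$.

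Item (iii) is the heart of the matter; I describe the $\GRAD$ column of the first diagram, the other columns and the homogeneous diagram being handled identically. Set $A := \inter_h\upg\calK_\delta\upg|_{P\upg(\calT_h)}$ and $B := \inter_h\upc\calK_\delta\upc|_{\bP\upc(\calT_h)}$, so that $J_h\upg = A^{-1}$ and $J_h\upc = B^{-1}$. Since $\calK_\delta\upg g$ is smooth (Lemma~\ref{Lem:Kdelta_Cinfty}) and therefore lies in the interpolation domain $V\upg(\Dom)$ of~\eqref{Diag:V_Ph}, combining Lemma~\ref{Lem:Kdelta_commutes}(i) with that diagram yields, for every $g \in Z\upgp(\Dom)$, the key identity
\begin{equation*}
\inter_h\upc\calK_\delta\upc\GRAD g \;=\; \inter_h\upc\GRAD\calK_\delta\upg g \;=\; \GRAD\inter_h\upg\calK_\delta\upg g.
\end{equation*}
Specializing $g = v_h \in P\upg(\calT_h)$ gives $B\,\GRAD v_h = \GRAD A v_h$ on $P\upg(\calT_h)$, equivalently $\GRAD\circ J_h\upg = J_h\upc\circ\GRAD$ on $P\upg(\calT_h)$. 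Applying this to $u_h := \inter_h\upg\calK_\delta\upg f \in P\upg(\calT_h)$ and invoking the key identity with $g = f$,
\begin{equation*}
\GRAD\calJ_h\upg f \;=\; \GRAD J_h\upg u_h \;=\; J_h\upc\GRAD u_h \;=\; J_h\upc\inter_h\upc\calK_\delta\upc\GRAD f \;=\; \calJ_h\upc\GRAD f.
\end{equation*}

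The delicate point lies in the homogeneous diagram~\eqref{Diag:V0_Ph0}: since $\calK_\delta\upg$ does not preserve zero traces, one must interpret $\inter_{h0}\upg,\IN_{h0},\IRT_{h0}$ as the operators obtained from their unrestricted counterparts by zeroing the boundary degrees of freedom, and check that~\eqref{Diag:V0_Ph0} commutes in this extended sense (consistent with the remark preceding Lemma~\ref{Lem_IL_Kdelta_Lp_stability}). Granting this structural fact, the $A,B$-argument above transfers verbatim to each row of both diagrams, completing item (iii).
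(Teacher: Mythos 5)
Your proposal is correct and follows essentially the same route as the paper: (i) from the definition of $J_h$ as the inverse of $\inter_h\calK_{\delta|P(\calT_h)}$; (ii) by composing $\|J_h\|\le 2$ with Lemma~\ref{Lem:Lp_stability_Ih_Kdelta} and then using invariance plus stability for the quasi-best-approximation bound; (iii) by first establishing $J_h\upc\circ\GRAD = \GRAD\circ J_h\upg$ on $P\upg(\calT_h)$ and then propagating it to general $f$ via the key identity $\inter_h\upc\calK_\delta\upc\GRAD = \GRAD\inter_h\upg\calK_\delta\upg$, which is exactly the chain used in the paper's proof. Your $A,B$ formulation is a slightly cleaner packaging of the same computation, and the caution you raise about how $\inter_{h0}\upg$ etc.\ must act on $\calK_\delta\upg f$ (which does not vanish on $\front$) --- namely that these operators only use the interior degrees of freedom so their range lies in $P_0\upg(\calT_h)$ --- is exactly the implicit understanding the paper invokes when it says the homogeneous case is handled "identically".
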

\begin{proof}
  The first property is a consequence of $\calJ_{h|P(\calT_h)}=\polI$, since
  $\calJ_h{\circ}\calJ_h =\calJ_{h|P(\calT_h)}{\circ}\calJ_h= \calJ_h$. The second
  property is proved by observing that the $L^p$-operator-norm of
  $J_h$ is bounded by $2$ and that of $\calI_h\calK_\delta$ is also
  uniformly bounded, as established in
  Lemma~\ref{Lem:Lp_stability_Ih_Kdelta}, since $\epsilon$ is now a
  fixed real number. Moreover, using that $\calJ_h
  f_h=f_h$ for all $f_h\in P(\calT_h)$, we have
\begin{multline*}
\|f - \calJ_h f \|_{L^p(\Dom;\Real^q)} =  \inf_{f_h \in P(\calT_h)} \|f - f_h
- \calJ_h (f - f_h)\|_{L^p(\Dom;\Real^q)} \\
\le  \inf_{f_h \in P(\calT_h)} (1 + \|\calJ_h\|_{\calL(L^p;L^p)}) \|f - f_h \|_{L^p(\Dom;\Real^q)} 
 \le c \inf_{f_h \in P(\calT_h)} \|f - f_h\|_{L^p(\Dom;\Real^q)},
\end{multline*}
which establishes \eqref{Th:Pih_item2}.  Let us now prove
\eqref{Th:Pih_item3}. We are just going to show that the leftmost square
commutes in the top diagram; the proof for the other squares is identical, and whether
boundary conditions are imposed or not is immaterial in the argument. Let
us first show that $J_h\upc \GRAD \phi_h = \GRAD (J_h\upg \phi_h)$ for 
all $\phi_h\in P\upg(\calT_h)$. We observe that
\begin{align*}
\GRAD \phi_h & = \GRAD (\inter_h\upg \calK\upg_{\delta|P\upg(\calT_h)} J_h\upg \phi_h)
= \GRAD (\inter_h\upg\calK\upg_\delta J_h\upg \phi_h) =
\inter\upc_h \GRAD (\calK\upg_\delta J_h\upg \phi_h) 
 = \inter\upc_h\calK\upc_\delta \GRAD (J_h\upg \phi_h),
\end{align*}
where we have used that
$\polI = \inter_h\upg \calK\upg_{\delta|P\upg(\calT_h)} J_h\upg$ (see
\eqref{def_of_Jh}), then
$\inter_h\upg \calK\upg_{\delta|P\upg(\calT_h)} J_h\upg= \inter_h\upg
\calK_\delta\upg J_h\upg$
(the range of $J_h\upg$ is in $P\upg(\calT_h)$), followed by
$\GRAD \inter_h\upg = \inter\upc_h \GRAD$ (see diagram~\eqref{Diag:V_Ph}) and
$\GRAD \calK\upg_\delta = \calK\upc_\delta \GRAD$ (see diagram~\eqref{Diag:W_C_Kdelta}). 
Since $\GRAD (J_h\upg \phi_h) \in \bP\upc(\calT_h)$ (see diagram~\eqref{Diag:V_Ph}), the above
argument together with \eqref{def_of_Jh} proves that 
\[
\GRAD \phi_h = (\IN_h\calK\upc_{\delta|\bP\upc(\calT_h)}) \GRAD (J_h\upg \phi_h) =
(J_h\upc)^{-1} \GRAD (J_h\upg \phi_h). 
\]
In conclusion, $J_h\upc \GRAD \phi_h = \GRAD (J_h\upg \phi_h)$.  Now we
finish the proof by using an arbitrary function $\phi\in
V\upg(\Dom)$. We have
\begin{align*}
\calJ_h\upc \GRAD \phi 
& = J_h\upc \IN_h\calK\upc_\delta \GRAD \phi 
= J_h\upc \IN_h \GRAD (\calK\upg_\delta\phi) 
= J_h\upc \GRAD (\inter_h\upg \calK\upg_\delta\phi)
= \GRAD (J_h\upg\inter_h\upg \calK\upg_\delta\phi).
\end{align*}
The last equality results from the fact that
$J_h\upc \GRAD \phi_h = \GRAD (J_h\upg \phi_h)$ for all
$\phi_h\in P\upg(\calT_h)$, as established above. This proves that
$\calJ_h\upc \GRAD \phi = \GRAD \calJ_h\upg\phi$. 
\end{proof}

\begin{remark}[Approximation]
Theorem~\ref{Th:Pih}(ii) shows that the quasi-interpolation error is bounded
by the best approximation error. Estimates of best approximation 
errors in fractional-order Sobolev spaces have been obtained 
recently in~\citet{ErnGuermond:15} for 
general finite element spaces.  
As an illustration, consider a $\bP\upc(\calT_h)$-based finite element 
approximation of a field $\bA \in \bZ\upcp(\Dom)$ (typically, with $p=2$). Suppose that the natural stability norm for this problem is that of $\Hrt$ and that the finite element solution $\bA_h\in \bP\upc(\calT_h)$ satisfies the a priori error estimate $\|\bA-\bA_h\|_{\Hrt} \le c \inf_{\ba_h\in \bP\upc(\calT_h)} \|\bA-\ba_h\|_{\Hrt}$. Then, taking $\ba_h=\calJ_h\upc\bA$ and using the commuting property leads to the bound
\[
\|\bA-\bA_h\|_{\Hrt} \le c (\|\bA-\calJ_h\upc\bA\|_{\bL^2(\Dom)} + 
\|\ROT\bA - \calJ_h\upd(\ROT\bA)\|_{\bL^2(\Dom)}).
\]
Assume that $\bA,\ROT\bA\in \bH^{r}(\Dom)$ for some real number
$r\in (0,k+1]$ where $k$ is the degree of the finite elements
composing $\bP\upc(\calT_h)$.  Then, using Theorem~\ref{Th:Pih}(ii)
together with~\cite[Cor.5.4]{ErnGuermond:15} leads to 
$\|\bA-\bA_h\|_{\Hrt} \le
ch^{r}(|\bA|_{\bH^r(\Dom)}+|\ROT\bA|_{\bH^r(\Dom)})$.
Note that no lower bound on $r$ is assumed a priori.
\end{remark}

\subsection{Discrete Poincar\'e inequalities} \label{SubSec:Poincare}
We illustrate the usefulness of the operators constructed above by 
proving discrete Poincar\'e inequalities for $\Hrt$-elements
in dimension $d=2,3$; we expose the material for $d=3$.
Assume also that $\Dom$ is
partitioned into $M$ connected, strongly Lipschitz subdomains
$\Dom_1,\cdots,\Dom_M$.  We consider two piecewise-smooth second-order tensor
fields $\bbeps$ and $\bbmu$, \ie we assume that these fields are in
\begin{equation}
  \polW^{1,\infty}(\cup_{i=1}^M\Dom_i) :=\left\{ \bbnu\in \polL^{\infty}(\Dom) \ | \
\GRAD(\bbnu_{|\Dom_i})\in [\polL^{\infty}(\Dom_i)]^{d},\; i=1,\cdots, M\right\},
\end{equation}
where $\polL^\infty(E):=L^{\infty}(E;\Real^{d\CROSS d})$. We additionally
assume that $\bbeps$ and $\bbmu$ are symmetric and the smallest
eigenvalue of each of these two tensors is bounded away from zero
from below uniformly over $D$.  Consider the following Maxwell
eigenvalue problems: Find $\bE$ and $0\ne \omega\in \polR$ such that
\begin{equation}
\ROT (\bbmu^{-1} \ROT \bE) = \omega \bE, 
\quad \DIV(\bbeps \bE)=0,
\quad \bE\CROSS \bn_{|\front}=\bzero, 
\label{Maxwell_eigenvalue_pb}
\end{equation}
Find $\bB$ and $0\ne \omega\in \polR$ such that
\begin{equation}
\ROT (\bbmu^{-1} \ROT \bB) = \omega \bB, 
\quad \DIV(\bbeps \bB)=0,
\quad (\bbeps\bB)\SCAL\bn_{|\front}=0. 
\label{Maxwell_eigenvalue_pb_B}
\end{equation}
Upon setting $\bH_{\CROSS\bn} := \{\bz\in \Hrt\st \DIV(\bbeps \bz)=0, \ \bz\CROSS \bn_{|\front}=\bzero\}$, 
$\bH_{\SCAL\bn} := \{\bz\in \Hrt\st \DIV(\bbeps \bz)=0, \ (\bbeps\bz)\SCAL\bn_{|\front}=0\}$, the
$L^2$-theory of the well-posedness of this problem is based on the
following embedding inequality: There are $c>0$ and $s>0$ (both
depending on $\Dom$ and $\bbeps$) such that
\begin{subequations}\begin{align}
\|\be\|_{\bH^s(\Dom)} \le c\, \|\ROT \be\|_{\bL^2(\Dom)}, \quad  \forall \be \in \bH_{\CROSS\bn},\label{Hs_Embedding}\\
\|\bb\|_{\bH^s(\Dom)} \le c\, \|\ROT \bb\|_{\bL^2(\Dom)}, \quad  \forall \bb \in \bH_{\SCAL\bn},\label{Hs_Embedding_B}
\end{align}\end{subequations}
provided $\front$ is connected and $\Dom$ is simply connected, respectively.
The above inequalities, proved in
\citet{Bonito_Guermond_Luddens_2013}, generalize classical
inequalities established by \citet{Costabel_1990} and
\citet{Birman_Solomyak_1987} assuming that the tensor $\bbeps$ is smooth over
the entire domain.

Let us consider the finite element approximation of the above
eigenvalue problem using the setting described in the previous
sections. The approximation theory of this problem is non-trivial,
especially when using finite elements that do not fit the De Rham
Diagram.  We refer to the book of \citet{Monk_book_2003} and the review
by \citet{Hiptmair_acta_2002} for an overview on the topic.  

Let $\bP\upc(\calT_h)$, $\bP_0\upc(\calT_h)$ be defined as above.  A
key step for approximating \eqref{Maxwell_eigenvalue_pb}
or~\eqref{Maxwell_eigenvalue_pb_B} consists of establishing the
following discrete Poincar\'e inequalities: There is $c>0$, uniform
with respect to $h$, such that
\begin{subequations}\label{eq:disc_PoinK}\begin{align}
\|\be_h\|_{\bL^2(\Dom)} \le c \, \|\ROT \be_h\|_{\bL^2(\Dom)},\quad \forall \be_h\in \bH_{h,\CROSS\bn},
\label{Poincare_Curl}\\
\|\bb_h\|_{\bL^2(\Dom)} \le c \, \|\ROT \bb_h\|_{\bL^2(\Dom)},\quad \forall \bb_h\in \bH_{h,\SCAL\bn},
\label{Poincare_Curl_B}
\end{align}\end{subequations}
where
$\bH_{h,\CROSS \bn}:= \{\bv_h\in \bP_0\upc(\calT_h)\st \int_\Dom (\bbeps \bv_h)
\SCAL \GRAD q_h \dif x =0, \ \forall q_h \in P\upg_{0}(\calT_h)\}$
and
$\bH_{h,\SCAL\bn}:= \{\bv_h\in \bP\upc(\calT_h)\st \int_\Dom (\bbeps \bv_h)
\SCAL \GRAD q_h \dif x =0, \ \forall q_h \in P\upg(\calT_h)\}$.
There are many ways of proving
\eqref{Poincare_Curl}-\eqref{Poincare_Curl_B} when $\bbeps$ is smooth,
since in this case it can be proved that the Sobolev index $s$ in
\eqref{Hs_Embedding} is larger than $\frac12$. The first route
described in \cite[\S4.2]{Hiptmair_acta_2002} consists of invoking
subtle regularity estimates from
\citet[Lemma~4.7]{Amrouche_Bernardi_Dauge_Girault_1998}. The second
one, which avoids invoking regularity estimates, is based on the so-called discrete compactness argument of
\citet{Kikuchi_1989} and further developed by
\citet{Monk_Demkowicz_2001} and
\citet{Caorsi_Fernades_Raffetto_2000}. The proof is not constructive
and is based on an argument by contradiction.

We now show that using the approximation operators described in the
previous sections gives a direct answer to the above question without
requiring any particular condition on the Sobolev index $s$ in
\eqref{Hs_Embedding}; see also \citet[Thm~3.6]{Arnold_Falk_Winther_2010}.
\begin{theorem}[Discrete Poincar\'e] \label{Thm:Poincare} Assume that $\front$ is connected (resp., $\Dom$ is simply connected). Then there is a uniform constant $c>0$
  such that \eqref{Poincare_Curl} holds (resp., \eqref{Poincare_Curl_B} holds).
\end{theorem}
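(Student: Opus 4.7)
The plan is to use the bounded commuting projections $\calJ_{h0}\upg$, $\calJ_{h0}\upc$, $\calJ_{h0}\upd$ from Theorem~\ref{Th:Pih}, together with the continuous embedding~\eqref{Hs_Embedding}, to reduce the discrete inequality~\eqref{Poincare_Curl} to its continuous counterpart. I describe only the tangential case; the normal case~\eqref{Poincare_Curl_B} is completely analogous, using the top diagram of Theorem~\ref{Th:Pih} with $\calJ_h\upg,\calJ_h\upc,\calJ_h\upd$, the embedding~\eqref{Hs_Embedding_B}, and ``$\front$ connected'' replaced by ``$\Dom$ simply connected'' throughout.

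The key is to produce, for any $\be_h\in\bH_{h,\CROSS\bn}$, a discrete splitting $\be_h=\be_h^\ast+\GRAD\psi_h$ with $\be_h^\ast\in\bP_0\upc(\calT_h)$ controlled by $\|\ROT\be_h\|_{\bL^2(\Dom)}$ and $\psi_h\in P_0\upg(\calT_h)$, after which the orthogonality defining $\bH_{h,\CROSS\bn}$ kills the gradient part. First, since $\ROT\be_h\in\bL^2(\Dom)$ is divergence-free and $\front$ is connected, standard Hodge theory yields a continuous lift $\be^*\in\bH_{\CROSS\bn}$ with $\ROT\be^*=\ROT\be_h$, and~\eqref{Hs_Embedding} gives $\|\be^*\|_{\bL^2(\Dom)}\le\|\be^*\|_{\bH^s(\Dom)}\le c\,\|\ROT\be_h\|_{\bL^2(\Dom)}$. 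Setting $\be_h^\ast:=\calJ_{h0}\upc\be^*\in\bP_0\upc(\calT_h)$, the bottom commuting diagram of Theorem~\ref{Th:Pih} and the pointwise invariance of $\bP_0\upd(\calT_h)$ under $\calJ_{h0}\upd$ yield $\ROT\be_h^\ast=\calJ_{h0}\upd(\ROT\be^*)=\calJ_{h0}\upd(\ROT\be_h)=\ROT\be_h$.

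Hence $\be_h-\be_h^\ast\in\bP_0\upc(\calT_h)$ is curl-free; since $\front$ is connected, there exists $\psi\in H^1_0(\Dom)$ with $\be_h-\be_h^\ast=\GRAD\psi$, and the commuting property of $\calJ_{h0}$ together with the pointwise invariance of $\bP_0\upc(\calT_h)$ gives
\[
\GRAD(\calJ_{h0}\upg\psi)=\calJ_{h0}\upc\GRAD\psi=\calJ_{h0}\upc(\be_h-\be_h^\ast)=\be_h-\be_h^\ast,
\]
so the splitting holds with $\psi_h:=\calJ_{h0}\upg\psi\in P_0\upg(\calT_h)$. Testing the orthogonality defining $\bH_{h,\CROSS\bn}$ against $q_h=\psi_h$ gives $\int_\Dom(\bbeps\be_h)\SCAL(\be_h-\be_h^\ast)\,\dif x=0$. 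Combining this with the uniform coercivity of $\bbeps$, Cauchy--Schwarz, and the $L^2$-stability of $\calJ_{h0}\upc$ from Theorem~\ref{Th:Pih}(ii) yields $\|\be_h\|_{\bL^2(\Dom)}^2\le c\,\|\be_h\|_{\bL^2(\Dom)}\|\be_h^\ast\|_{\bL^2(\Dom)}\le c\,\|\be_h\|_{\bL^2(\Dom)}\|\ROT\be_h\|_{\bL^2(\Dom)}$, whence~\eqref{Poincare_Curl} after division.

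The main conceptual hurdle is the discrete exactness step producing $\psi_h$ inside the correct finite-element subspace, and the matching regularity/boundary-condition compatibility between the continuous lift $\be^*$ and its discrete image $\be_h^\ast=\calJ_{h0}\upc\be^*$. Both are precisely what the commuting-projection framework of Theorem~\ref{Th:Pih} is designed to deliver, thereby bypassing both the classical discrete compactness argument of Kikuchi and any threshold assumption on the Sobolev index $s$ in~\eqref{Hs_Embedding}.
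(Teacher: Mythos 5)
Your proof is correct and takes essentially the same route as the paper: produce a continuous lift $\be^*\in\bH_{\CROSS\bn}$ of the given discrete curl, apply the $L^2$-stable commuting projection $\calJ_{h0}\upc$, invoke the discrete orthogonality defining $\bH_{h,\CROSS\bn}$ to annihilate the gradient remainder, and close with \eqref{Hs_Embedding} and the stability of $\calJ_{h0}\upc$. The only organizational difference is that the paper constructs the gradient potential $\phi\in H^1_0$ explicitly up front by solving the Poisson problem $\DIV(\bbeps\GRAD\phi)=\DIV(\bbeps\be_h)$ (so $\be_h - \calJ_{h0}\upc\be^*=\GRAD(\calJ_{h0}\upg\phi)$ falls out immediately), whereas you recover a potential $\psi$ for the curl-free discrete remainder $\be_h-\be_h^\ast$ afterwards via the exactness of the De Rham complex under the hypothesis that $\front$ is connected; the two are equivalent, and the extra cohomological step in your version is unnecessary but harmless.
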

\begin{proof}
  We only do the proof for~\eqref{Poincare_Curl}, the proof for~\eqref{Poincare_Curl_B} is similar.
Let $\bv_h\in\bH_{h,\CROSS\bn}$ be a nonzero discrete field.
Let $\phi(\bv_h)\in \Hunz$ be the solution to the following Poisson problem:
\[
\DIV(\bbeps\GRAD\phi(\bv_h)) = \DIV(\bbeps\bv_h),\qquad \phi(\bv_h)_{|\front} =0.
\]
Note that this problem is well-posed owing to the assumed regularity
and structure of $\bbeps$. Let us define
$\bv(\bv_h):= \bv_h - \GRAD\phi(\bv_h)$.  This definition implies that
\[
\DIV(\bbeps\bv(\bv_h)) = 0,\quad \ROT (\bv(\bv_h))= \ROT \bv_h, \quad
\bv(\bv_h)\CROSS\bn_{|\front}= \bzero,
\] 
so that $\bv(\bv_h)\in \bH_{\CROSS\bn}$. 
We now bound $\|\bv_h\|_{\bL^2(\Dom)}$ as follows:
\begin{align*}
c \|\bv_h\|_{\bL^2(\Dom)}^2 & \le \int_\Dom(\bbeps\bv_h)\SCAL \bv_h \dif x
= \int_\Dom(\bbeps\bv_h)\SCAL (\bv_h - \calJ_{h0}\upc\bv(\bv_h) + \calJ_{h0}\upc\bv(\bv_h))  \dif x\\
& = \int_\Dom(\bbeps\bv_h)\SCAL \calJ_{h0}\upc(\bv_h - \bv(\bv_h))\dif x 
+ \int_\Dom(\bbeps\bv_h)\SCAL\calJ_{h0}\upc\bv(\bv_h)  \dif x \\
& = \int_\Dom(\bbeps\bv_h)\SCAL \calJ_{h0}\upc\GRAD(\phi(\bv_h))\dif x 
+ \int_\Dom(\bbeps\bv_h)\SCAL\calJ_{h0}\upc\bv(\bv_h)  \dif x.
\end{align*}
Note here that we used that $\calJ_{h0}\upc \bv_h = \bv_h$.
Then using the commuting property $\calJ_{h0}\upc\GRAD(\phi(\bv_h)) = \GRAD(\calJ_{h,0}\upg\phi(\bv_h))$ and since $\calJ_{h,0}\upg$ maps onto $P\upg_0(\calT_h)$,
we infer that
\begin{align*}
c \|\bv_h\|_{\bL^2(\Dom)}^2 &\le  \int_\Dom(\bbeps\bv_h)\SCAL \GRAD(\calJ_{h,0}\upg\phi(\bv_h))\dif x 
+ \int_\Dom(\bbeps\bv_h)\SCAL\calJ_{h0}\upc\bv(\bv_h)  \dif x  \\
& = \int_\Dom(\bbeps\bv_h)\SCAL\calJ_{h0}\upc\bv(\bv_h)  \dif x \le
c' \|\bv_h\|_{\bL^2(\Dom)} \|\calJ_{h0}\upc\bv(\bv_h) \|_{\bL^2(\Dom)}.
\end{align*}
The uniform boundedness of $\calJ_{h0}\upc$ on $\bL^2(\Dom)$ and \eqref{Hs_Embedding} with $s=0$ imply 
\begin{align*}
\|\bv_h\|_{\bL^2(\Dom)} & \le c \|\calJ_{h0}\upc\bv(\bv_h) \|_{\bL^2(\Dom)} 
\le c' \|\bv(\bv_h) \|_{\bL^2(\Dom)} 
\le c'' \|\ROT\bv_h\|_{\bL^2(\Dom)}.
\end{align*}
This concludes the proof.
\end{proof}

\section*{Acknowledgments} The authors acknowledge fruitful
discussions with S.~H. Christiansen and A. Demlow.

\bibliographystyle{abbrvnat} 
\bibliography{ref}

\end{document}